\numberwithin{equation}{section}
\newcommand{\abs}[1]{\vert #1 \vert}
\newcommand{\norm}[1]{\left\Vert #1 \right\Vert}
\newcommand{\Z}{\mathbb{Z}}
\newcommand{\R}{\mathbb{R}}
\newcommand{\angles}[1]{\langle #1 \rangle}
\DeclareMathOperator{\supp}{supp}
\newtheorem{theorem}{Theorem}
\newtheorem{proposition}{Proposition}
\newtheorem{lemma}{Lemma}
\newtheorem{corollary}{Corollary}
\newtheorem*{PWtheorem}{Paley-Wiener Theorem}
\theoremstyle{definition}
\theoremstyle{remark}
\newtheorem{remark}{Remark}
\title[Analyticity of solution for KdV]{ Asymptotic lower bound for the radius of spatial analtyicity to solutions of KdV equation}
\author{Achenef Tesfahun}
\email{achenef@gmail.com}
\address{Department of Mathematics\\
University of Bergen\\
PO Box 7803\\
5020 Bergen\\ Norway}
\subjclass{35Q53; 35L30}
\keywords{KdV equation;  Radius of spatial analticity; Asymptotic lower bound   }
\begin{document}

\begin{abstract}
  It is shown that the uniform radius of spatial analyticity $\sigma(t)$ of solutions at time $t$ to the KdV equation cannot decay faster than 
$|t|^{-4/3}$ as $|t| \to \infty$ given initial data that is analytic with fixed radius $\sigma_0$. This improves a recent result of Selberg and Da Silva, where they proved a decay rate of $|t|^{-(4/3 + \varepsilon) }$ for arbitrarily small positive $\varepsilon$. The main ingredients in the proof are almost conservation law for the solution to the KdV equation in space of analytic functions and space-time  dyadic bilinear $L^2$ estimates associated with the KdV equation.
\end{abstract}

\maketitle

\section{Introduction}

Consider the Cauchy problem for KdV equation 
\begin{equation}\label{kdv}
\left\{
\begin{aligned}
  & u_t + u_{xxx} + u u_x = 0,
  \\
  &u(0,x) = f(x),
\end{aligned}
\right.
\end{equation}
where the unknown is 
$$
  u(t,x) \colon \R \times \mathbb R \to \mathbb R.
$$

This equation was derived by Korteweg and de Vries \cite{KdV1895} as a model for long wave propagating in a channel. The well-posedness theory of \eqref{kdv} has been extensively studied, for instance, Kenig, Ponce and Vega 
 \cite{KPV1996} proved local well-posedness in $H^s$ for $s > -3/4$. Later, this was extended to a global result by Colliander, Keel, Staffilani, Takaoka and Tao \cite{CKSTT2003}.  Moreover, Christ, Colliander and Tao \cite{CCT2003} proved that the solution map of \eqref{kdv} fails to be uniformly continuous in $H^s$ for $s<-3/4$ which was first proved by Kenig, Ponce, and Vega \cite{KPV2001} for the complex-valued problem.
 More recently,  Guo \cite{Guo2009} established a global well-posedness result in $H^{-3/4}$ which is sharp in the sense of  \cite{CCT2003}. 

In this work, we are interested in the persistence of spatial analyticity for the solutions of \eqref{kdv}, given initial data in a class of
analytic functions. This is motivated naturally by observing that many special solutions of \eqref{kdv} 
such as for instance solitary and cnoidal waves are analytic in a strip about the real axis. 
For 
 real-analytic initial data $f$ with 
 uniform radius of analyticity $\sigma_0>0$, so there is a holomorphic extension to a complex strip 
 $$ S_{\sigma_0} =\{x + iy : |y| < \sigma_0 \},$$
it was established in \cite{GK2002} that for small $t$
 the solution $u$ of \eqref{kdv} is analytic in $S_{\sigma(t)}$ with $\sigma(t)=\sigma_0$, i.e., 
 the radius of analyticity remains constant for short times. 
For large times on the other hand it was shown in \cite{BGK2005} that 
$\sigma(t)$ can decay no faster than 
$|t|^{-12}$ as $t \to \infty$. This is improved  greatly more  recently by Selberg and Da Silva \cite{SD2015} to a decay rate of $|t|^{-(4/3+\varepsilon)}$, where $0<\varepsilon\ll 1$ is sufficiently small.
In the present paper we are able to remove the $\varepsilon$ exponent, and thus improving the decay rate further to $|t|^{-4/3}$. The exponent $-4/3$ turn out to be related to the Sobolev regularity exponent to $H^{-3/4}$ (specfically, one is the reciprocal of the other) at which Guo \cite{Guo2009} obtained a sharp well-posedness result. The main ingredients in our proof are almost conservation law for the solution to the KdV equation in spaces of analytic functions and space-time dyadic bilinear estimates associated with the KdV equation. For similar studies for the Dirac-Klein-Gordon system, generalized KdV  and cubic NLS see \cite{ST2015, HKS2017, ST2017,  AT2017}. 
 For studies on related issues for nonlinear partial differential equations see for instance \cite{  DHK1995, Ferrari1998, GGYTE2015,  GHHP2013, HHP2011,  HP2012, Himonas2009, H1990,  Kappeler2006, Oliver2001,   KM1986, Panizzi2012, Levermore1997}.

A class of analytic function spaces suitable to study analyticity of solution is the analytic Gevrey class. These spaces are denoted $G^{\sigma,s} = G^{\sigma,s}(\mathbb R)$ with a norm given by
\[
\| f \|_{G^{\sigma, s}} = \left\| e^{\sigma | D_x |}\angles{ D_x }^s f\right\|_{L^2_x},
\]
where $D_x = -i\partial_x$ with Fourier symbol $\xi$ 
and $\angles{\cdot}=\sqrt{1+|\cdot|^2}$.
We write 
 $$
 G^{\sigma} := G^{\sigma,0}.
 $$
 For $\sigma=0$ the Gevrey-space coincides with the Sobolev space $H^s$.

One of the key properties of the Gevrey space is that every function in $G^{\sigma,s}$ with $\sigma>0$ has an analytic extension to the strip $S_\sigma$. This property 
is contained in the following Theorem which is proved in
 \cite[p. 209]{K1976} for $s = 0$; the argument applies also for $s\in \R$ with some obvious
modifications..
\begin{PWtheorem} 
Let $\sigma > 0$ and $s\in \R$.  Then the following are equivalent:
\begin{enumerate}[(i)]
\item $f \in G^{\sigma, s}$.
\item $f$ is the restriction to the real line of a function $F$ which is holomorphic in the strip
\[
S_\sigma =  \{ x + i y :\ x,y \in \mathbb{R}, \  | y | < \sigma\}
\]
and satisfies
\[
\sup_{| y | < \sigma} \| F( x + i y ) \|_{L^2_x} < \infty.
\]
\end{enumerate}
\end{PWtheorem}

Observe that the Gevrey spaces satisfy the following embedding property:
\begin{align}
\label{Gembedding}
  G^{\sigma, s} &\subset G^{\sigma', s'} \quad \text{for all $ 0\le \sigma'< \sigma$ and $s, s'\in \mathbb R$}.
\end{align}
As a consequence of this property and the existing well-posedness theory in $H^s$ we conclude that the Cauchy problem \eqref{kdv} has a unique, 
smooth solution for all time, given initial data 
$f  \in G^{\sigma_0}$ for all $\sigma_0>0$.
Our main result gives an algebraic lower bound on the radius of analyticity $\sigma(t)$ of the solution as the time $t$ tends to infinity.
\begin{theorem}\label{thm-gwp}
Assume $f \in G^{\sigma_0}$ for some $\sigma_0 > 0$. Let $u$ be the global $C^\infty$--solution of \eqref{kdv}. Then $u$ satisfies
\[
  u(t) \in G^{\sigma(t)} \quad \text{for all } \ t\in \R
\]
with the radius of analyticity $\sigma(t)$ satisfying an asymptotic lower bound
\[
\sigma(t) \ge c|t|^{-\frac 43} \quad \text {as} \ |t|\rightarrow \infty,
\]
where $c > 0$ is a constant depending on  $\|f\|_{G^{\sigma_0} }$ and $\sigma_0$.
\end{theorem}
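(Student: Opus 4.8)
The plan is to combine a local existence theory in the analytic Gevrey class with a uniform-in-$\sigma$ lifespan, an almost conservation law that controls the growth of the $G^\sigma$ norm with a gain of a positive power of $\sigma$, and an iteration over $\sim|t|$ successive time steps; the Paley--Wiener Theorem then turns a bound $u(t)\in G^{\sigma(t)}$ into the asserted holomorphic extension to $S_{\sigma(t)}$. By the reflection symmetry $u(t,x)\mapsto u(-t,-x)$ of \eqref{kdv} it suffices to treat $t>0$. Since $G^{\sigma}\subset G^{\sigma'}$ for $\sigma'\le\sigma$ and $\|f\|_{G^\sigma}\le\|f\|_{G^{\sigma_0}}=:A_0$ whenever $0<\sigma\le\sigma_0$, the theorem reduces to the following claim: there is a constant $c=c(A_0,\sigma_0)>0$ such that for every sufficiently large $T$, the choice $\sigma=cT^{-4/3}$ gives $u(T)\in G^\sigma$ with $\|u(T)\|_{G^\sigma}\le 2A_0$.

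First I would set up the local theory. Let $X^{s,b}$ be the Bourgain space adapted to the Airy flow and $X^{\sigma,s,b}$ its analytic version with norm $\bignorm{e^{\sigma|D_x|}u}_{X^{s,b}}$, and write $u_\sigma=e^{\sigma|D_x|}u$. Then $u_\sigma$ solves a KdV-type equation with nonlinearity $\tfrac12 e^{\sigma|D_x|}\partial_x\big[(e^{-\sigma|D_x|}u_\sigma)^2\big]$, whose symbol $e^{\sigma(|\xi_1+\xi_2|-|\xi_1|-|\xi_2|)}\le1$ is harmless; hence the required nonlinear estimate reduces to the known KdV bilinear estimate $\norm{\partial_x(vw)}_{X^{0,b-1}}\lesssim\norm{v}_{X^{0,b}}\norm{w}_{X^{0,b}}$, which holds in fact down to $s=-3/4$. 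A standard contraction argument then gives, for data of $G^\sigma$-size $\le 2A_0$, a solution on $[0,\delta]$ with $\norm{u_\sigma}_{X^{0,b}_\delta}\lesssim A_0$ and -- crucially -- a lifespan $\delta=\delta(A_0)>0$ independent of $\sigma\in(0,\sigma_0]$.

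Next comes the almost conservation law, which is the technical core. Since $\partial_x^3$ is skew-adjoint, $\tfrac{d}{dt}\norm{u_\sigma(t)}_{L^2}^2=-2\re\int u_\sigma\,e^{\sigma|D_x|}(uu_x)\d x$, which on the Fourier side is a trilinear form with symmetrized multiplier $-\tfrac{i}{6}\sum_j\xi_j e^{2\sigma|\xi_j|}$ restricted to $\xi_1+\xi_2+\xi_3=0$; at $\sigma=0$ it vanishes, recovering the conservation of $\norm{u}_{L^2}$. Writing $\sum_j\xi_j e^{2\sigma|\xi_j|}=\sum_j\xi_j(e^{2\sigma|\xi_j|}-1)$, using $e^{2\sigma|\xi_j|}-1\lesssim\sigma|\xi_j|e^{2\sigma|\xi_j|}$, and distributing the exponentials onto the three copies of $u_\sigma$ via $2|\xi_{\max}|\le|\xi_1|+|\xi_2|+|\xi_3|$, the whole form is bounded in absolute value by a trilinear form in $u_\sigma$ with multiplier $\lesssim\sigma\,|\xi_{\max}|\,|\xi_{\min}|$, supported where two of the frequencies have opposite signs. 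I would then decompose dyadically, split according to which frequency carries the largest modulation, and estimate each piece by the space--time dyadic bilinear $L^2$ estimates for the Airy equation, exploiting the resonance identity $\xi_1^3+\xi_2^3+\xi_3^3=3\xi_1\xi_2\xi_3$ on $\{\sum\xi_j=0\}$, which forces a large modulation and hence derivative gains; the outcome is
\[
\sup_{0\le t\le\delta}\norm{u(t)}_{G^\sigma}^2 \ \le\ \norm{u(0)}_{G^\sigma}^2 \ +\ C\,\sigma^{3/4}\,A_0^{3}.
\]
The exponent $\tfrac34$ -- the reciprocal of the $\tfrac43$ in the theorem -- is precisely the smoothing margin available when these bilinear estimates are run at the sharp threshold $s=-3/4$ of \cite{Guo2009}; keeping it sharp rather than $\tfrac34-\varepsilon$ is exactly the improvement over \cite{SD2015}, and is where the sharp bilinear $L^2$ estimates at the endpoint regularity $s=-3/4$ come into play.

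Finally, the iteration. Applying the two steps above on consecutive intervals $[k\delta,(k+1)\delta]$, $k=0,\dots,n-1$, with $n=\lceil T/\delta\rceil$, and running a continuity/bootstrap argument to keep $\norm{u(t)}_{G^\sigma}\le 2A_0$ on all of $[0,T]$, the total growth satisfies $A_0^2+nC\sigma^{3/4}(2A_0)^3\le(2A_0)^2$ provided $n\sigma^{3/4}\lesssim A_0^{-1}$, i.e. $T\sigma^{3/4}\lesssim \delta A_0^{-1}$; since $\delta$ depends only on $A_0$, choosing $c$ small enough this holds with $\sigma=cT^{-4/3}$. Hence $u(T)\in G^{\sigma(T)}$ with $\sigma(T)\ge cT^{-4/3}$, and Paley--Wiener finishes the proof, the case $t<0$ following by reflection. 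I expect essentially all the difficulty to sit in the third step: carrying the multiplier $\sigma|\xi_{\max}||\xi_{\min}|$ through every dyadic frequency and modulation configuration -- in particular the high$\times$high$\to$low and high$\times$low$\to$high interactions responsible for the $s=-3/4$ restriction in the KdV bilinear theory -- while using the endpoint bilinear $L^2$ estimates, where one must absorb the logarithmic divergences in the frequency and modulation sums (typically against a small power $\delta^{\theta}$ coming from the short-time restriction) in order to land on the exponent exactly $\tfrac34$ with a constant depending only on $A_0$ and $\sigma_0$.
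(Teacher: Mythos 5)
Your overall scheme is the paper's: local existence in analytic Bourgain-type spaces with a time step depending only on the data norm, an almost conservation law with error $C\sigma^{3/4}\|f\|_{G^{\sigma}}^3$, an iteration over $\sim T/\delta$ steps forcing $\sigma\sim T^{-4/3}$, and Paley--Wiener at the end; your bookkeeping in the iteration is correct. The gap is in the core step, in two places. First, the multiplier bound you actually state is the full mean--value estimate, giving a trilinear form with weight $\sigma|\xi_{\max}||\xi_{\min}|$. That weight is too strong: in the balanced interaction $N_1\sim N_2\sim N_3=N$ the available dyadic bilinear smoothing (derivative included) is exactly $N^{-3/4}$, so a weight $\sigma N_{\min}N_{\max}\sim\sigma N^{2}$ leaves an uncontrollable factor $\sigma N^{1/4}$, and the bound with $\sigma^{3/4}$ you then write down does not follow from it. One must interpolate at the level of the symbol, i.e.\ use $1-e^{-r}\le r^{3/4}$ with $r=\sigma(|\xi_1|+|\xi_2|-|\xi_1+\xi_2|)\lesssim\sigma\min(|\xi_1|,|\xi_2|)$, which is precisely how the paper manufactures the weight $\sigma^{3/4}\min(N_1,N_2)^{3/4}$ that just saturates the smoothing (proof of Theorem \ref{thm-approx} and Lemma \ref{lm-Rest}). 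You clearly know the $3/4$ margin, but as written this step is a non sequitur.

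Second, and decisively, your mechanism for landing on the exponent exactly $3/4$ --- the whole point of the theorem versus \cite{SD2015} --- would fail. You set up the local theory and the trilinear estimate in the classical $X^{0,b}$ with $b>\tfrac12$ and propose to absorb the endpoint logarithmic divergences ``against a small power $\delta^{\theta}$ from the short-time restriction''. Those divergences are dyadic sums over frequencies and modulations in the critical high$\times$high interactions (in the balanced case the weight $N^{3/4}$ is cancelled by the gain $N^{-3/4}$ with zero slack, so any $N^{0+}$ or $L^{0+}$ loss is fatal, and taking $b>\tfrac12$ or trading against powers of $N$ produces exactly such a loss); they are not generated by the time localization, and $\delta$ is a fixed constant depending only on $\|f\|_{G^{\sigma_0}}$, so a factor $\delta^{\theta}$ cannot cancel a divergent frequency/modulation sum --- doing the trade honestly reintroduces an $\varepsilon$ and gives back only $\sigma^{3/4-\varepsilon}$, i.e.\ the $|t|^{-4/3-\varepsilon}$ rate you were supposed to improve. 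The paper's fix is structural: it works throughout in Guo's endpoint framework --- the $\ell^1$-in-modulation norm $X$ at $b=\tfrac12$, the space $\bar X^{s}$ whose unit-frequency component carries an $L_x^2L_t^\infty$ norm, the extra bilinear $L_x^2L_t^\infty$ estimate of Lemma \ref{lm-lowfreqbiest} for the high$\times$high$\to$unit-frequency interaction in the local theory, and, inside the almost conservation law, an $L^1_{t,x}$ pairing with $L_t^\infty L_x^2$ control of the unit-frequency factor so that no lossy $\Lambda^{-1}$-modulation sum is needed there. Without importing this endpoint machinery (or an equivalent device), your argument proves only the $\varepsilon$-lossy version of the theorem.
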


By time reversal symmetry of \eqref{kdv} we may from now on restrict ourselves to positive times $t\ge 0$. 
The first step in the proof of Theorem \ref{thm-gwp} is to show that in a short time interval $0 \le  t \le  t_0$, where $t_0> 0$ depends on the norm of the initial data, 
the radius of analyticity remains strictly positive. This is proved using a standard contraction argument involving energy type estimates, 
and a bilinear estimate in Bourgain-Gevrey type space; the proofs are given in section 4.
The next step is to improve the control on the growth of the solution in the time interval $[0, t_0]$, measured in the data norm $G^{\sigma_0}$.
To achieve this we show that, although the conservation of  $G^{\sigma_0}$-norm of solution does not hold exactly, it does hold in an approximate sense (see Section 5.1).
This approximate conservation law will allow us to iterate the local result and obtain the asymptotic lower bound on $\sigma $ in Theorem \ref{thm-gwp} (see Section 5.2).


\section{Preliminaries, Functions spaces and linear estimates}


\subsection{Preliminaries}
First we fix notation.
In equations, estimates and summations capitalized variables such as $N$ and $L$ are presumed to be dyadic with $N , L >  0$, i.e., these
variables range over numbers of the form $ 2^k$ for $k \in \Z$.
In estimates we use $A\lesssim B$ as shorthand for $A \le CB$ and $A\ll B$ for $A\le C^{-1} B$, where $C\gg1 $ is a positive constant which is independent of dyadic numbers such as $N$ and $L$; $A\sim B$ means $B\lesssim A\lesssim B$; 
$\mathbb{1}_{\{\cdot\}}$  denotes the indicator function which is 1 if the condition in the bracket is satisfied and 0 otherwise; we write  $a\pm:=a\pm \varepsilon$ for sufficiently small $0<\varepsilon\ll 1$. Finally, we use the notation
$$\| \cdot \|=\| \cdot \|_{L^2_{t,x}(\R^{1+1})}.$$

Consider an even function $ \chi \in C_0^\infty((-2, 2))$ such that 
$\chi (s)=1 $ if $|s|\le 1$.
 Define
\begin{align*}
\beta_{N}(s)&=\begin{cases}
& 0, \quad \text{if} \ N<1,
\\
& \chi(s), \quad \text{if} \ N=1,
\\
&\chi\left(\frac{s}{N}\right)-\chi \left(\frac{2s}{N}\right),\quad \text{if} \ N>1.
\end{cases}
 \end{align*}
 Thus
 \begin{align*}
\supp \beta_1 \subset \left\{s\in \R: |s|\le 2 \right \}, \quad \supp \beta_N \subset \left \{s\in \R: \frac N 2\le  |s|\le 2N\right\}  \  \ \text{for} \ N>1.
 \end{align*}
Note that 
\begin{equation}\label{betasum}
\sum_{N\ge 1}\beta_{N}(s)=1 \quad \text{for} \ s\neq 0.
\end{equation}

The Fourier transform in space and space-time are given by
\begin{align*}
\mathcal F_{x} (f)(\xi)=&\widehat f(\xi)=\int_{\R} e^{ -i x  \xi} f(x) \ dx,
\\
\mathcal F_{t, x} (u)(\tau,\xi)&=\widetilde u(\tau, \xi)=\int_{\R^{1+1}} e^{ -i(t\tau+ x  \xi)} u(t,x) \ dt dx.
\end{align*}
Now define 
\begin{align*}
P_{N} u&= \mathcal F_x^{-1}\left[ \beta_N(\xi)\widehat{ u})\right] \ \ \text{for} \ N\ge 1,
\\
Q_{L} u&= \mathcal F_{t,x}^{-1}\left[\beta_L\left( \tau- \xi^3\right)\widetilde{ u} \right]  \ \ \text{for} \ L\ge 1.
 \end{align*}
 Here $N$ and $L$ measure the magnitude of the spatial frequency and modulation, respectively. We use the notation 
$$u_N:=P_N u, \quad u_{N, L}:=P_N Q_L u.$$ 
In view of \eqref{betasum} one can write
 \begin{align*}
u=\sum_{N\ge 1 } u_{N} .
 \end{align*}

In addition to $P_N$ and $Q_N$ we also need the homogeneous projections $\dot P_N$ and $\dot Q_L$ defined by
\begin{align*}
\dot P _Nu &=\mathcal F_{x}^{-1}\left[\mathbbm{1}_{ \left\{\frac N2 \le |\xi|\le 2 N\right\}} \widehat u\right] \ \ \text{for} \ N> 0,\\
\dot Q_Lu &= \mathcal F_{t,x}^{-1}\left[ \mathbbm{1}_{ \left\{ \frac L2 \le |\tau-\xi^3|\le 2 L \right\}} \widetilde u\right] \ \ \text{for} \ L>0.
\end{align*}
Note that
\begin{equation}\label{PP}
P_Nu =\dot P_N P_N u, \quad Q_Lu =\dot Q_L Q_L u \quad  \text{for} \ \ N, L>1 .
\end{equation}

\begin{remark}\label{rmk-dyadicsum} We shall make a frequent use of of the following dyadic summation estimate:
 For $N \in 2^{\Z}$, $1\le \alpha <\beta$ and $a\in \R$ we have
\begin{equation}\label{dyadicsum}
\sum_{\alpha\le N \le \beta} N^a \sim 
\begin{cases}
&\beta^a \quad  \text{if} \ \ a>0,
\\
&\log(\beta/\alpha) \quad  \text{if} \ \ a=0,
\\
&\alpha^a \quad  \text{if} \ \ a<0.
\end{cases}
\end{equation} 
 
 \end{remark}
 

\subsection{Function spaces}
For $1\le q, r \le \infty$ the mixed space-time Lebesgue space $L_t^q L_x^r(\R^{1+1})$ is defined with the norm
$$
\|u\|_{L_t^q L_x^r}= \| \|u(t, \cdot)\|_{L_x^r}\|_{L_t^q }=\left(\int_{\R} \left(\int_{\R} |u(t,x)|^r \, dx\right)^{\frac qr} \, dt\right)^\frac 1q
$$
with an obvious modification when $q=\infty$ or $r=\infty$, and when the space is restricted to bounded intervals. Similarly 
$$
\|u\|_{L_x^r L_t^q}= \| \|u(\cdot, x)\|_{L_t^q}\|_{L_x^r}.
$$
 For an interval $I$ we write $L_x^r  L_I^q$ to denote $L_x^r  L_t^q (I \times \R)$, i.e., the time variable is restricted to $I$.

The Bourgain space associated with the KdV equation, denoted  $X^{s,b}$, is defined as the completion of the Schwartz class $\mathcal S(\R^{1+1})$ with respect to the norm
\begin{align*}
\norm{u}_{X^{s, b}}&=\left(\sum_{N, L \geq
1}N ^{2s} L^{2b} \norm{ u_{N,L} }^2 \right)^\frac12.
\end{align*}
The restriction to a time slab $I \times \mathbb R $ of $ X^{s,b}$, denoted $ X^{s,b}_I$, is a Banach space when equipped with the norm
$$
\| u \|_{ X^{s,b}_I}  = \inf \left\{ \| v \|_{X^{s,b}}: \ v = u \text{ on } I \times \mathbb{R} \right\}.
$$
By a standard contraction argument in the $ X^{s,b}_I$--space local well-posedness of \eqref{kdv} for $H^s$ data reduces to the bilinear estimate
\begin{equation}\label{biest1}
\norm{\partial_x(uv)}_{X^{s, b-1}}\lesssim \norm{u}_{X^{s, b}}\norm{v}_{X^{s, b}}
\end{equation}
for some $b>1/2$.

 In \cite{KPV1996} Kenig, Ponce and Vega proved that \eqref{biest1} holds for $s>-3/4$, but fails for $s<-3/4$.
Later, it was also shown by Nakanishi, Takaoka and Tsutsumi \cite{NTT2001} that
 \eqref{biest1} also fails to hold at the borderline $s=-3/4$. A usual approach to resolve problems such as this is to modify the Bourgain space by setting $b=\frac12$ and replacing the $l^2$-summation in the modulation parameter, $L$, by $l^1$-summation.
This space which we denote by $X^s$ is defined with respect to the norm
\begin{align*}
\norm{u}_{X^s}&=\left(\sum_{N \geq
1}N ^{2s} \norm{ u_N}^2_{X} \right)^\frac12,
\end{align*}
where 
$$
\norm{v}_{X}=\sum_{L \geq
1}L^\frac12 \norm{Q_L v}.
$$
Note that if 
$u_N \in X$ then
\begin{equation}\label{X-timeloc}
\norm{\gamma(M(t-t_0)) u_N}_{X} \lesssim \norm{ u_N}_{X} 
\end{equation}
for all $M, N\ge 1$, $t_0\in \R$ and $\gamma \in \mathcal S(\R)$.   
Indeed, by definition 
\begin{align*}
\norm{\gamma(M(t-t_0)) u_N}_{X}&=\sum_{L \geq
1}L^\frac12 \norm{ \gamma(M(t-t_0)) Q_L u_N} 
\\
&\le \| \gamma(M(\cdot-t_0)) \|_{L_t^\infty} \sum_{L \geq
1}L^\frac12 \norm{  Q_L u_N} 
\\
&\lesssim \norm{ u_N}_{X}.
\end{align*}

The restriction to a time slab $I \times \mathbb R $ of $ X^{s}$, denoted $ X^{s}_I$, is defined similarly as above.
Now using $X^s_I$ as a contraction space local well-posedness in $H^{-3/4}$ will follow if one proves the bilinear estimate 
\begin{equation}\label{biest11}
\norm{ \mathcal B (u, v)}_{ X^{-\frac34}} 
\lesssim \norm{ u}_{X^{-\frac34 }} \norm{ v}_{ X^{-\frac34}},
\end{equation}
where
$$
\mathcal B(u, v)(t)=\chi(t/4) \int_0^t S(t-t') \partial_x \left( ( \chi u \cdot  \chi v )(t') \right) \, dt'
$$
is the time localized Duhamel term associated to the KdV equation.

However, as pointed out in \cite{Guo2009}  in trying to establish the bilinear estimate \eqref{biest11} a particular case of high:high-low frequency interaction introduces a logarithmic  derivative loss, and thus 
\eqref{biest11} is an open problem. To resolve this problem a version of $ X^s $ that is modified with respect to low frequency modes (corresponding to $N=1)$ is introduced. The new space, denoted \footnote{In \cite{Guo2009} the spaces $X^s $ and $\bar X^s $ are denoted as $F^s $ and  $\bar F^s $, respectively. } here $\bar X^s $, is defined with respect to the norm
$$
\norm{u}_{ \bar X^s}=\left( \norm{ u_1}^2_{L_x^2 L^\infty_t}+\sum_{N >
1}N ^{2s} \norm{ u_N}^2_{X} \right)^\frac12 ,
$$
where the additional $L_x^2 L^\infty_t$-norm for the low frequency helps to avoid the logarithmic divergence in the bilinear estimate \eqref{biest11}. 
The restriction to a time slab $I \times \mathbb R $ of $ \bar X^{s}$, denoted $ \bar X^{s}_I$, is defined similarly as before.
By using this space Guo \cite{Guo2009} proved the bilinear estimate
\begin{equation}\label{biest2}
\norm{ \mathcal B (u, v)}_{\bar X^{-\frac34}} 
\lesssim \norm{ u}_{\bar X^{-\frac34 }} \norm{ v}_{\bar X^{-\frac34}}
\end{equation}
thereby establishing an endpoint local well-posedness result for \eqref{kdv}.


\subsection{Linear estimates}

Let $S(t)f=e^{-t\partial_x^3} f$ be the solution to the Airy equation (free solution to the KdV equation). The following Lemma contains frequency localized Strichartz estimates, maximal function estimates and smoothing effect estimates for solution to the Airy equation (see e.g. \cite{Guo2008, KPV1991, KPV1991b}).  By the transfer principle (see e.g. \cite [Lemma 3.2]{Guo2009}) these estimates can be extended to hold for any function in $X$.
\begin{lemma} \label{lm-strz} 
 Let $I$ be a interval with $|I|\le 1$, $N\ge 1$ and  $M> 1$ be dyadic numbers. Let the pair $(q,r)$ satisfies
 $$
 2\le q, r \le \infty, \quad \frac3q+\frac1r=\frac12.
 $$
  
\begin{enumerate} [(a)]
\item For all  $f\in \mathcal{S}(\R)$ we have the following:
 \begin{align}
\label{str1}
\norm{  S(t)  f_N}_{L_t^q L_x^r} &\lesssim \norm{ f_N }_{L^2_x},
\\
\label{str2}
\norm{S(t) f_N }_{L_x^2  L_I^\infty} &\lesssim N^\frac34 \norm{  f_N }_{L^2_x},
\\
\label{str3}
\norm{S(t)  f_N }_{L_x^4  L_t^\infty} &\lesssim N^\frac14
 \norm{  f_N }_{L^2_x},
\\
\label{str4}
\norm{S(t)  f_M  }_{L_x^\infty L_t^2} &\lesssim M^{-1} \norm{ f_M }_{L^2_x}.
\end{align}

\item For all $u_N \in X$, we have
\begin{align}
\label{stre1}
\norm{u_N }_{L_t^q L_x^r} &\lesssim \norm{u_N }_{X},
\\
\label{stre2}
\norm{u_N  }_{L_x^2  L_I^\infty} &\lesssim N^\frac34 \norm{u_N }_{X},
\\
\label{stre3}
\norm{u_N }_{L_x^4  L_t^\infty} &\lesssim  N^\frac14\norm{u_N }_{X},
\\
\label{stre4}
\norm{u_M }_{L_x^\infty L_t^2} &\lesssim M^{-1} \norm{u_M }_{X} .
\end{align}

\end{enumerate}

\end{lemma}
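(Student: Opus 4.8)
The plan is to handle the two parts of the lemma separately. Part~(a) collects classical fixed-data bounds for the free Airy group $S(t)=e^{-t\partial_x^3}$, which I would take from \cite{KPV1991,KPV1991b,Guo2008}; part~(b) upgrades each of them from $S(t)$ to the modulation-summed Bourgain space $X$ by a transference argument.

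For part~(a), the starting point is the dispersive decay $\norm{S(t)g}_{L^\infty_x}\lesssim\abs{t}^{-1/3}\norm{g}_{L^1_x}$, obtained by van der Corput applied to the kernel $\int e^{i(x\xi+t\xi^3)}\,\d\xi$ (the critical point $x+3t\xi^2=0$ degenerates at $\xi=0$, where the third derivative of the phase equals $6t\neq0$, producing the rate $\abs{t}^{-1/3}$). Interpolating with the $L^2_x$-unitarity of $S(t)$ gives $\norm{S(t)}_{L^{r'}_x\to L^r_x}\lesssim\abs{t}^{-\frac13(1-\frac2r)}$, and the standard $TT^*$ argument combined with the Hardy--Littlewood--Sobolev inequality then yields \eqref{str1} along the scaling line $\frac3q+\frac1r=\frac12$, $6\le q\le\infty$ (the frequency localization to $f_N$ is not used here). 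The smoothing estimate \eqref{str4} I would prove directly: computing, for fixed $x$, the time-Fourier transform of $t\mapsto S(t)f(x)$ and substituting $\tau=\xi^3$ gives $\norm{S(\cdot)f(x)}_{L^2_t}^2=c\int_{\R}\xi^{-2}\abs{\widehat f(\xi)}^2\,\d\xi$, which is independent of $x$; restricting to $\abs{\xi}\sim M$ produces the gain $M^{-1}$. The remaining two bounds, the Kenig--Ponce--Vega maximal estimates \eqref{str2} (local in time, cost $N^{3/4}$) and \eqref{str3} (cost $N^{1/4}$), reduce via $TT^*$ to pointwise and localized bounds for the Airy kernel and are proved in \cite{KPV1991,KPV1991b}, which I would cite.

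For part~(b) --- the transference principle of \cite[Lemma~3.2]{Guo2009} --- fix one of the norms $Y\in\{L^q_tL^r_x,\,L^2_xL^\infty_I,\,L^4_xL^\infty_t,\,L^\infty_xL^2_t\}$ for which part~(a) supplies $\norm{S(t)f_N}_Y\lesssim\mu(N)\norm{f_N}_{L^2_x}$, with $\mu(N)\in\{1,\,N^{3/4},\,N^{1/4},\,M^{-1}\}$ accordingly. Given $u_N\in X$, write $u_N=\sum_{L\ge1}Q_Lu_N$. For each dyadic $L$, changing variables $(\tau,\xi)\mapsto(\lambda,\xi)=(\tau-\xi^3,\xi)$ in the space-time Fourier transform of $Q_Lu_N$ --- where $\abs{\lambda}\lesssim L$ on the support --- represents $Q_Lu_N(t,x)=\int_{\abs{\lambda}\lesssim L}e^{it\lambda}S(t)g_\lambda(x)\,\d\lambda$, with each $g_\lambda$ frequency-localized to $\abs{\xi}\sim N$ and $\int\norm{g_\lambda}_{L^2_x}^2\,\d\lambda\sim\norm{Q_Lu_N}^2$. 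Since each norm $Y$ is invariant under multiplication by the unimodular factor $e^{it\lambda}$, Minkowski's inequality and the free estimate give $\norm{Q_Lu_N}_Y\le\int_{\abs{\lambda}\lesssim L}\norm{S(t)g_\lambda}_Y\,\d\lambda\lesssim\mu(N)\int_{\abs{\lambda}\lesssim L}\norm{g_\lambda}_{L^2_x}\,\d\lambda$, and Cauchy--Schwarz in $\lambda$ together with Plancherel yield $\norm{Q_Lu_N}_Y\lesssim\mu(N)L^{1/2}\norm{Q_Lu_N}$. Summing over dyadic $L$ gives $\norm{u_N}_Y\le\sum_{L\ge1}\norm{Q_Lu_N}_Y\lesssim\mu(N)\sum_{L\ge1}L^{1/2}\norm{Q_Lu_N}=\mu(N)\norm{u_N}_X$, which is exactly \eqref{stre1}--\eqref{stre4}. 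The one point requiring a word is that \eqref{stre2} is stated on the fixed interval $I$ with $\abs{I}\le1$; this is inherited from \eqref{str2} since $\norm{e^{it\lambda}h}_{L^2_xL^\infty_I}=\norm{h}_{L^2_xL^\infty_I}$.

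The genuine difficulty lies entirely in part~(a), and specifically in the maximal-function estimates \eqref{str2}--\eqref{str3}, whose proofs rest on stationary-phase analysis of the Airy kernel and the Kenig--Ponce--Vega localization method; the Strichartz line \eqref{str1}, the smoothing estimate \eqref{str4}, and the transference step are all soft. Since \eqref{str2}--\eqref{str3} are by now standard, I would cite them and devote the written proof to the transference argument.
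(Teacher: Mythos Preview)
Your proposal is correct and aligns with the paper's treatment: the paper does not give a proof of this lemma but simply refers to \cite{Guo2008, KPV1991, KPV1991b} for part~(a) and to the transfer principle \cite[Lemma~3.2]{Guo2009} for part~(b). Your sketch fills in exactly these cited arguments --- the dispersive/$TT^*$ derivation of \eqref{str1}, the direct Plancherel-in-time proof of \eqref{str4}, the citation of Kenig--Ponce--Vega for the maximal estimates \eqref{str2}--\eqref{str3}, and the standard superposition-of-free-waves transference for part~(b) --- so there is nothing to correct or contrast.
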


We also have the following embedding estimates.
\begin{lemma}\label{lm-embed}
\leavevmode
\begin{enumerate} [(i)]
\item \label{emb1}
 Let $1\le N\lesssim 1 $. For all $s\in \R$ and $u\in \bar X^{s} $ we have 
\begin{equation*}
\norm{ u_{N}}_{L_t^\infty L_x^2 }  \lesssim  \norm{u}_{ \bar X^s}.
\end{equation*}
\item \label{emb2} For all $u_1 \in X$ we have
\begin{equation*}
\norm{ u_1}_{L_x^2 L^\infty_I} \lesssim \norm{  u_1}_{ X}.
\end{equation*}
\item \label{emb3}
 For all $s\in \R$ we have $\bar X^{s} \subset C(\mathbb R,H^{s})$
and
\begin{equation*}
\sup_{t\in \R}\norm{u(t) }_{ H^s} \lesssim  \norm{u}_{ \bar X^s}.
\end{equation*}

\item \label{emb4} 
For all $s_1\le s_2$ we have $\bar X^{s_2} \subset \bar X^{s_1}$.

\end{enumerate}
\end{lemma}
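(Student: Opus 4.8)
The statement to prove is Lemma~\ref{lm-embed}, a collection of four embedding estimates for the Bourgain–Gevrey type spaces. Let me sketch proofs for each part.
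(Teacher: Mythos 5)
Your proposal contains no proof at all: after announcing that you will ``sketch proofs for each part,'' no argument for any of the four parts is actually given, so there is nothing to check against the paper. The entire content is missing.

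For comparison, the paper's proof is short but does require specific inputs that you would need to supply. For part (i) it uses the transferred Strichartz estimate \eqref{stre1} with $(q,r)=(\infty,2)$, which for $1\le N\lesssim 1$ gives $\norm{u_N}_{L_t^\infty L_x^2}\lesssim N^s\norm{u_N}_X$ for any $s$, combined with the trivial bound $\norm{u_1}_{L_t^\infty L_x^2}\lesssim\norm{u_1}_{L_x^2 L_t^\infty}$ to handle the $N=1$ piece that is measured in $L_x^2L_t^\infty$ in the $\bar X^s$-norm. Part (ii) is exactly the maximal-function estimate \eqref{stre2} with $N=1$. Part (iii) follows from the definition of $\bar X^s$: one bounds $\norm{u}_{L_t^\infty H^s}^2$ by $\norm{u_1}_{L_t^\infty L_x^2}^2+\sum_{N>1}N^{2s}\norm{u_N}_{L_t^\infty L_x^2}^2$, controls the first term by $\norm{u_1}_{L_x^2L_t^\infty}^2$ and each high-frequency term by $\norm{u_N}_X^2$ via \eqref{stre1}, which is precisely $\norm{u}_{\bar X^s}^2$ (continuity in time is the standard consequence of the $l^1$-in-$L$ structure of $X$). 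Part (iv) is immediate from the definition since $N^{2s_1}\le N^{2s_2}$ for dyadic $N\ge 1$. Without these (or equivalent) arguments written out, the proposal cannot be accepted.
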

\begin{proof}
First we prove \eqref{emb1}. For  $1\le N\lesssim 1 $ we have by \eqref{stre1} with $(q,r)=(\infty, 2)$
$$
\norm{ u_{N}}_{L_t^\infty L_x^2 }  \lesssim  N^s \norm{u_N}_{ X} \quad \text{for all } \ \ s. 
$$
Combining this with the definition of $\bar X^s$ and the simple estimate
$$
\norm{ u_1 }_{L_t^\infty L_x^2 } \lesssim \norm{ u_1 }_{L_x^2 L^\infty_t }
$$
we obtain \eqref{emb1}.

 The inequality \eqref{emb2} follows from
\eqref{stre2} with $N=1$ whereas  
 \eqref{emb3} follows from the definition of $\bar X^{s}$ and \eqref{stre1}, i.e.,
\begin{align*}
\norm{u }^2_{L_t^\infty  H^s} &\sim  \norm{u_1}^2_{L_t^\infty L_x^2} + \sum_{N>1} N^{2s}\norm{u_N}^2_{ L_t^\infty L^2_x}
\\
&\lesssim   \norm{u_1}^2_{L_x^2 L_t^\infty } + \sum_{N>1} N^{2s}\norm{u_N}^2_{X}= \norm{u}^2_{ \bar X^s},
\end{align*}

Finally, \eqref{emb4} is simple to prove.
\end{proof}

Define the operator $\Lambda$ by
$$
\Lambda u=\mathcal F^{-1}_{t, x}\left[ \left(i+\tau-\xi^3\right) \widetilde u\right].
$$
We remark that since $|i+\tau-\xi^3|=\angles{\tau-\xi^3}$ the operator $\Lambda^{-1} $ is not singular.

\begin{lemma}[Energy type estimates] \label{lm-linearest}
\leavevmode

\begin{enumerate} [(a)]
\item \label{linearest1} Assume $f\in H^s$ for $s\in \R$. Then there exists a constant $C>0$ such that
\begin{equation}\label{linest}
\norm{\chi(t) S(t) f}_{\bar X^s} \le C \norm{ f}_{H^s},
\end{equation}
\item  \label{linearest2}  Assume $N\ge 1$ and $\Lambda^{-1} F_N \in X$ . Then there exists a constant $C>0$ such that
\begin{equation}\label{duhmest}
\norm{\chi(t) \int_0^t S(t-t') F_N(t') \, dt'}_{X} 
\le C \norm{ \Lambda^{-1} F_N}_{X}.
\end{equation}
\end{enumerate}
\begin{proof}
Part \eqref{linearest1} follows from the definition of $\bar X^s$ and Lemma \ref{lm-strz}, \eqref{str2}. Indeed, using \eqref{str2} we obtain
\begin{equation}\label{energy1}
\norm{\chi(t) S(t) f_1}_{L^2_xL_t^\infty} \lesssim \norm{ f_1}_{L^2_x}.
\end{equation}
On the other hand, we have
\begin{align*}
\mathcal F_{t,x}\left [Q_L \left( \chi(t) S(t) f_N \right) \right](\tau, \xi)= \beta_L (\tau-\xi^3) \widehat{\chi}(  \tau-\xi^3)  \widehat{f_N}(\xi),
\end{align*}
where we used the fact that $\widehat{S(t)f}(\xi)=e^{it\xi^3} \widehat f(\xi)$. 
Then by Plancherel 
\begin{equation}\label{freeL2est}
\begin{split}
\norm{Q_L( \chi(t) S(t) f_N )} &=\norm{\beta_L (\tau-\xi^3) \widehat{\chi}(  \tau-\xi^3)  \widehat{f_N}(\xi) }
\\
&=\norm{P_L \chi}_{L_t^2} \norm{f_N}_{L_x^2} .
\end{split}
\end{equation}
This in turn implies
\begin{equation}\label{energy2}
\begin{split}
\norm{\chi(t) S(t) f_N }_X&=\sum_{L\ge 1} L^\frac12 \norm{P_L \chi}_{L_t^2} \norm{f_N}_{L_x^2} 
\\
&\lesssim \norm{ \chi}_{H_t^1}  \norm{f_N}_{L_x^2}
 \lesssim  \norm{f_N}_{L_x^2} ,
\end{split}
\end{equation}
where to obtain the second inequality we used Cauchy-Schwarz, i.e., 
\begin{equation}\label{CS}
\sum_{L\ge 1} L^\frac12 \norm{P_L \chi}_{L_t^2} \le
 \left(\sum_{L\ge 1} L^{-1} \right)^\frac12  \left(\sum_{L\ge 1} L^{2} \norm{P_L \chi}^2_{L_t^2} \right)^\frac12  
\\
\lesssim \norm{ \chi}_{H_t^1} .
\end{equation}
Now using the estimates \eqref{energy1} and \eqref{energy2} in
the definition of $\bar X^s$ we obtain \eqref{linearest1}.

Variants of part \eqref{linearest2} has appeared in the literature, see for instance \cite{Guo2009a}. For completeness we give the proof here by adapting the proof of ( \cite[Section 13.1]{DS2011}).
To this end we let
$$
u_N(t)= \int_0^t S(t-t') F_N(t') \, dt'.
$$

Taking
Fourier transform in space,
\begin{align*}
  \widehat u_N(t,\xi)
  &=
  \int_0^t e^{ i(t-t')\xi^3} \widehat F_N(t',\xi) \, dt'
  \simeq
  \int \frac{e^{it\lambda}-e^{it\xi^3}}{i(\lambda-\xi^3)} \widetilde F_N(\lambda,\xi) \, d\lambda
  \\
    &=\left(
  \int_{\left\{ \abs{\lambda -\xi^3} \lesssim 1\right\} } +  \int_{\left\{ \abs{\lambda -\xi^3} \gg 1\right\} } \right) \frac{e^{it\lambda}-e^{it\xi^3}}{i(\lambda-\xi^3)} \widetilde F_N(\lambda,\xi) \, d\lambda
  \\
  &:=\widehat v_N(t,\xi)+ \widehat w_N(t,\xi).
\end{align*}

\subsection*{Estimate for $v_N$}
Expanding we write
\begin{align*}
  \widehat{v_N}(t,\xi)
  =
  e^{ it\xi^3} \sum_{k=1}^\infty \int_{\left\{\abs{\lambda -\xi^3} \lesssim 1\right\}} \frac{\left[it(\lambda -\xi^3)\right]^k}{k! i(\lambda - \xi^3)} \widetilde F_N(\lambda,\xi) \, d\lambda
\end{align*}
and hence
\begin{equation}\label{vN}
  v_N(t) =
  \sum_{k=1}^\infty \frac{t^k}{k!}  S(t) g_k,
\end{equation}
where
\begin{align*}
  \widehat{g_k}(\xi) = \int_{\left\{ \abs{\lambda -\xi^3} \lesssim 1\right\} }  \left[i(\lambda -\xi^3)\right]^{k-1}  \widetilde F_N(\lambda,\xi) \, d\lambda.
\end{align*}
Set $\psi_k(t)=t^k \chi(t)$. In view of \eqref{vN} and \eqref{CS} we have
\begin{align*}
  \norm{\chi(t) v_N}_{X}
  = \sum_{L\ge 1} L^\frac12 \norm{Q_L( \chi(t) v_N )}
  &\le \sum_{k=1}^\infty \frac{1}{k!}  \sum_{L\ge 1} L^\frac12 \norm{P_L  \psi_k }_{L_t^2} \norm{g_k}_{L_x^2} 
\\
&\le \sum_{k=1}^\infty \frac{1}{k!}   \norm{  \psi_k }_{H_t^1} \norm{g_k}_{L_x^2} .
\end{align*}
But $ \norm{  \psi_k }_{H_t^1} \lesssim 2^k + k 2^{k-1}$,  and by Cauchy-Schwarz
\begin{align*}
  |\widehat{g_k}(\xi) |^2&= \left( \int_{|\lambda -\xi^3| \lesssim 1}  |\lambda -\xi^3 |^{2(k-1)}| |i+\lambda -\xi^3|^2 \, d\lambda \right) 
  \\   
   &  \qquad \qquad \cdot \left( \int_{| \lambda -\xi^3| \lesssim 1}  | i+\lambda -\xi^3|^{-2} |\widetilde F_N(\lambda,\xi) |^2\, d\lambda \right)
  \\
 & \lesssim \int_{| \lambda -\xi^3| \lesssim 1}  | i+\lambda -\xi^3|^{-2} |\widetilde F_N(\lambda,\xi) |^2\, d\lambda.
\end{align*}
By Plancherel we have
$
\norm{g_k}_{L_x^2}\lesssim \norm{\Lambda^{-1} F_N}$, and hence 
\begin{align*}
  \norm{\chi(t) v_N}_{X}
&\lesssim \norm{\Lambda^{-1} F_N}  \left(\sum_{k=1}^\infty \frac{ 2^k + k 2^{k-1}}{k!} \right)
\\
& \lesssim  \norm{\Lambda^{-1} F_N}  \lesssim  \norm{\Lambda^{-1} F_N}_X  ,
\end{align*}
where in the last inequality we used 
\begin{align*}
    \norm{\Lambda^{-1} F_N}^2  \sim    \sum_{L\ge 1}\norm{\Lambda^{-1} Q_L F_N}^2  \lesssim
    \norm{\Lambda^{-1} F_N}^2_X  .
\end{align*}

\subsection*{Estimate for $w_N$}
Taking Fourier transform in time
\begin{align*}
  \widetilde w_N(\tau,\xi)
  &=
  \int_{ \left\{ \abs{\lambda -\xi^3} \gg 1\right\}} \frac{\delta(\tau-\lambda)-\delta(\tau- \xi^3)}{i(\lambda- \xi^3 )}  \widetilde F_N(\lambda,\xi) \,
  d\lambda\\
 & =
  \frac{ \mathbbm{1}_{  \left\{ \abs{\lambda -\xi^3} \gg 1\right\}
  }\widetilde F_N(\tau,\xi)}{i(\tau- \xi^3)}
  - \delta(\tau- \xi^3) \int \frac{ \mathbbm{1}_{  \left\{ \abs{\lambda -\xi^3} \gg 1\right\}
  }\widetilde F_N(\lambda,\xi)}{i(\lambda - \xi^3 )}  \, d\lambda
  \\
  &:= \widetilde y_N(\tau,\xi) -  \widetilde z_N(\tau,\xi).
\end{align*}

Obviously, (see \eqref{X-timeloc}) we can estimate $y_N$ as
\begin{align*}
  \norm{\chi(t)y_N}_{X}
  \lesssim \norm{y_N}_{X}&=\sum_{L\ge 1} L^\frac12 \norm{Q_L y_N }
  \\
  &\lesssim  \sum_{L\ge 1} L^\frac12 \norm{
   Q_L (\Lambda^{-1}F_N) } =\norm{\Lambda^{-1}F_N}_{X}.
\end{align*}

On the other hand, write
$ \widetilde z_N(\tau,\xi)=\delta(\tau- \xi^3) \widehat h_N(\xi), $
where
$$
  \widehat h_N(\xi)
  =
  \int \frac{  \mathbbm{1}_{  \left\{ \abs{\lambda -\xi^3} \gg 1\right\}} \widetilde F_N(\lambda,\xi) }{i(\lambda-\xi^3)}  \, d\lambda.
$$
By Plancherel
\begin{align*}
 \norm{Q_L (\chi (t)z_N )} = \norm{ \beta_L(\tau-\xi^3) \widehat{\chi}(\tau-\xi^3) \widehat h_N (\xi)} =\norm{ P_L\chi}_{L_t^2} \norm{h_N}_{L_x^2}
\end{align*}
But by dyadic decomposition and Cauchy-Schwarz
\begin{align*}
  |\widehat{h_N}(\xi) |
  \lesssim \int   \mathbbm{1}_{  \left\{ \abs{\lambda -\xi^3} \gg 1\right\}}   | \widetilde { ( \Lambda^{-1}F_N) }(\lambda,\xi) |\, d\lambda 
 &\lesssim \sum_{L\gg 1   } \int  \mathbbm{1}_{  \{ \abs{\lambda -\xi^3}\sim L \} } |\widetilde { ( \Lambda^{-1}F_N) }(\lambda,\xi) |\, d\lambda 
 \\
& \le  \sum_{L\gg 1   }  L^\frac12  \left( \int \mathbbm{1}_{  \{ \abs{\tau -\xi^3}\sim L \} } |\widetilde { ( \Lambda^{-1}F_N) }(\tau,\xi) |^2 \, d\tau \right)
\end{align*}
and hence
\begin{align*}
\norm{h_N}_{L_x^2}
  \lesssim  \sum_{L\ge 1} L^\frac12 \norm{ Q_L (\Lambda^{-1}F_N) }=\norm{\Lambda^{-1}F_N}_{X}.
\end{align*}
Therefore,
\begin{align*}
\norm{ \chi(t) z_N}_{X}&= 
\sum_{L\ge 1} L^\frac12 \norm{Q_L (\chi (t)z_N )}
\\
&=  
\sum_{L\ge 1} L^\frac12 \norm{ \beta_L(\tau-\xi^3) \widehat{\chi}(\tau-\xi^3) \widehat h_N (\xi)}
\\
& =  
\sum_{L\ge 1} L^\frac12  \norm{ P_L\chi}_{L_t^2} \norm{h_N}_{L_x^2}
&
\\
&\lesssim \norm{\chi}_{H_t^1}\norm{h_N}_{L_x^2}
  \lesssim \norm{\Lambda^{-1}F_N}_{X}.
\end{align*}

 \end{proof}

\end{lemma}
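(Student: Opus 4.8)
The plan is to establish the two parts by elementary Fourier bookkeeping, in each case reducing everything to Plancherel's theorem together with the summation inequality $\sum_{L\ge1}L^{1/2}\norm{P_L\chi}_{L_t^2}\lesssim\norm{\chi}_{H_t^1}$ behind \eqref{CS}, which converges only because $\sum_{L\ge1}L^{-1}<\infty$.

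For part \eqref{linearest1} I would split the $\bar X^s$-norm of $\chi(t)S(t)f$ into its low-frequency piece ($N=1$, measured in $L_x^2L_t^\infty$) and its high-frequency piece ($N>1$, measured in $X$). For the low-frequency piece one applies the smoothing estimate \eqref{str2} with $N\sim1$, giving $\norm{\chi(t)S(t)f_1}_{L_x^2L_t^\infty}\lesssim\norm{f_1}_{L_x^2}$. For the high-frequency piece, since $\widehat{S(t)f}(\xi)=e^{it\xi^3}\widehat f(\xi)$, the space-time Fourier transform of $\chi(t)S(t)f_N$ is $\widehat\chi(\tau-\xi^3)\widehat{f_N}(\xi)$, so applying $Q_L$ and Plancherel yields $\norm{Q_L(\chi(t)S(t)f_N)}=\norm{P_L\chi}_{L_t^2}\norm{f_N}_{L_x^2}$; summing $L^{1/2}$ against this gives $\norm{\chi(t)S(t)f_N}_X\lesssim\norm{\chi}_{H_t^1}\norm{f_N}_{L_x^2}\lesssim\norm{f_N}_{L_x^2}$. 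Reassembling the two pieces with the weights $N^{2s}$ reconstitutes $\norm{f}_{H^s}^2$, so this part is routine.

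For part \eqref{linearest2} I would use the standard device for Duhamel terms in $X^{s,b}$-type spaces. After the spatial Fourier transform one has $\widehat{u_N}(t,\xi)=\int_0^t e^{i(t-t')\xi^3}\widehat{F_N}(t',\xi)\,dt'$, which equals, up to constants, $\int\frac{e^{it\lambda}-e^{it\xi^3}}{i(\lambda-\xi^3)}\widetilde F_N(\lambda,\xi)\,d\lambda$. I would split this $\lambda$-integral into a near-resonant part $\abs{\lambda-\xi^3}\lesssim1$, call it $v_N$, and a non-resonant part $\abs{\lambda-\xi^3}\gg1$, call it $w_N$. On the non-resonant region the multiplier $\frac{1}{i(\lambda-\xi^3)}$ is comparable to $\frac{1}{i+\lambda-\xi^3}$, so after a time Fourier transform $w_N$ decomposes into a high-modulation piece bounded directly by $\norm{\Lambda^{-1}F_N}_X$ via the time-localization bound \eqref{X-timeloc}, plus a free-solution piece $S(t)h_N$ whose profile is estimated in $L_x^2$ by a dyadic-in-$L$ Cauchy-Schwarz argument, again producing $\norm{\Lambda^{-1}F_N}_X$. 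On the near-resonant region the singularity of $\frac{1}{i(\lambda-\xi^3)}$ must be absorbed: expanding $e^{it\lambda}-e^{it\xi^3}=e^{it\xi^3}\sum_{k\ge1}\frac{[it(\lambda-\xi^3)]^k}{k!}$ cancels the singular $k=0$ term and gives $v_N(t)=\sum_{k\ge1}\frac{t^k}{k!}S(t)g_k$ with $\widehat{g_k}(\xi)=\int_{\abs{\lambda-\xi^3}\lesssim1}[i(\lambda-\xi^3)]^{k-1}\widetilde F_N(\lambda,\xi)\,d\lambda$; since $\abs{i+\lambda-\xi^3}=\angles{\lambda-\xi^3}\sim1$ on this region, Cauchy-Schwarz yields $\norm{g_k}_{L_x^2}\lesssim\norm{\Lambda^{-1}F_N}$, and since $\norm{t^k\chi(t)}_{H_t^1}\lesssim 2^k+k2^{k-1}$ the convergent series $\sum_{k\ge1}(2^k+k2^{k-1})/k!$ closes the bound.

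The main obstacle will be the near-resonant term $v_N$, which is exactly where the Duhamel kernel $\frac{1}{i(\lambda-\xi^3)}$ is singular: one trades the singularity for an entire power series in $t$ and must verify that the series converges, which is what forces one to keep the full $H_t^1$-norm of $\chi$ (through \eqref{CS}) rather than just its $L_t^2$-norm. Everything else — the non-resonant term $w_N$ and all of part \eqref{linearest1} — reduces to Plancherel identities and the dyadic summation estimate of Remark \ref{rmk-dyadicsum}, so I do not expect further difficulty.
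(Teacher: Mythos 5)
Your proposal is correct and follows essentially the same route as the paper: part (a) via the low/high frequency split, the identity $\mathcal F_{t,x}[\chi(t)S(t)f_N]=\widehat\chi(\tau-\xi^3)\widehat{f_N}(\xi)$, Plancherel, and the Cauchy--Schwarz summation \eqref{CS}; part (b) via the same near-resonant/non-resonant splitting of the Duhamel integral, the Taylor expansion $v_N=\sum_{k\ge1}\frac{t^k}{k!}S(t)g_k$ with the $\norm{t^k\chi}_{H_t^1}$ bound, and the decomposition of the non-resonant piece into the $\Lambda^{-1}$-type term and the free-solution term $S(t)h_N$ handled by dyadic Cauchy--Schwarz. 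No gaps to flag.
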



\section{Bilinear estimates}

For dyadic numbers $N_j>0$ ($j=1,2,3$) we denote by 
$ N_{\text{min}}$, $ N_{\text{med}}$ and $ N_{\text{max}}$ the minimum, median and maximum of $(N_1, N_2, N_3)$. We use similar notation for  $L_j>0$ ($j=1,2,3$). 

Following the methods in \cite{Tao2001} the bilinear estimate in $X^{s,b}$-space that is needed to obtain local well-posedness of \eqref{kdv} reduces to establishing dyadic bilinear estimates of the form
\begin{equation} \label{BiNLest}
\norm{\dot P_{N_3}\dot Q_{L_3} 
\left(  \left( \dot P_{N_1} \dot Q_{L_1} u_1 \right) \left(  \dot P_{N_2} \dot Q_{L_2} u_2\right) \right)} \le C(N,L) \prod_{j=1}^2 \|   \dot P_{N_j} \dot Q_{L_j} u_j \|
\end{equation}
for some  \footnote{ In \cite{Tao2001} the optimal constant is denoted by $\norm{m}_{ [3; \R \times \R]}, $ where $$m=m(\tau_j, \xi_j)=\prod_{j=1}^3  \mathbbm{1}_{ \left\{ |\xi|\sim N_j \right\} } \mathbbm{1}_{ \left\{ |\tau-\xi^3|\sim  L_j \right\} }. $$} optimal constant $C(N,L) $ that is a function of $N_j, L_j >0$ 
 ($j=1,2,3$).

By checking the support properties in Fourier space of the bilinear term on the left hand side of \eqref{BiNLest} one can see that this term vanishes unless the following conditions are satisfied (see (29) and (30) in \cite{Tao2001}): 
\begin{align}
\label{Ncompare}
N_{\text{max}}& \sim N_{\text{med}},
\\
\label{lmaxmed}
L_{\text{max}} &\sim  \max (N_{\text{min}}  N^{2}_{\text{max}}, L_{\text{med}} ).
\end{align}
We may thus assume \eqref{Ncompare} and \eqref{lmaxmed} throughout the paper.

\begin{proposition}[\cite{Tao2001}, Proposition 6.1]\label{prop-dydbiest}
Let $N_j, L_j>0$ ($j=1,2,3$) be dyadic numbers.
 Then \eqref{BiNLest} holds 
with $C(N,L)$ as follows:
\begin{enumerate} [(a)]
\item \label{cnl1} If $N_{\text{max}} \sim N_{\text{min}}$ and $L_{\text{max}}\sim N_{\text{min}}  N^{2}_{\text{max}}$, then 
$$
C(N,L)\sim  N^{-\frac{ 1} 4}_{\text{max}}  L^\frac12_{\text{min}} L^\frac14_{\text{med}} .
$$
\item  \label{cnl2} If $N_2\sim N_3 \gg N_1  $ and $N_{\text{min}}  N^{2}_{\text{max}} \sim L_1\gtrsim L_2, L_3$, then 
$$
C(N,L)\sim
N^{-1}_{\text{max}}  L^\frac12_{\text{min}} 
\min \left( N_{\text{min}}  N^{2}_{\text{max}},  \frac{ N_{\text{max}} } {N_{\text{min}} }  L_{\text{med}} \right)^\frac12.
$$
Similar estimates hold for any permutations of $(1,2,3)$.
\item  \label{cnl3} In all other cases, we have
          $$
C(N,L)\sim N^{-1}_{\text{max}} L^\frac12_{\text{min}}
\min \left( N_{\text{min}}  N^{2}_{\text{max}},  L_{\text{med}}\right)^\frac12.
$$
\end{enumerate}
\end{proposition}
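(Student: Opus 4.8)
This is Tao's bilinear estimate from \cite{Tao2001}, so the plan is to reproduce the argument there, organized around the two algebraic identities that govern the interaction. By duality and Plancherel, the left side of \eqref{BiNLest} is a trilinear convolution integral $\int \prod_{j=1}^3 \mathbbm{1}_{\{|\xi_j|\sim N_j\}}\mathbbm{1}_{\{|\tau_j-\xi_j^3|\sim L_j\}} F_1 F_2 F_3$ over the hyperplane $\{\xi_1+\xi_2+\xi_3=0,\ \tau_1+\tau_2+\tau_3=0\}$, and the task is to bound its operator norm against $\prod\|F_j\|_{L^2}$. The two key algebraic facts are: (i) on that hyperplane, $\xi_1^3+\xi_2^3+\xi_3^3=3\xi_1\xi_2\xi_3$, which forces the \emph{resonance identity} $(\tau_1-\xi_1^3)+(\tau_2-\xi_2^3)+(\tau_3-\xi_3^3)=-3\xi_1\xi_2\xi_3$, hence $L_{\max}\sim\max(N_{\min}N_{\max}^2,L_{\med})$ and the support constraint \eqref{lmaxmed}; and (ii) for fixed values of two of the modulation variables and one frequency, the remaining frequency is confined to a set whose measure is controlled by a Jacobian/transversality computation — this is the source of every gain of $N_{\max}^{-1/2}$ or similar.

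The mechanics I would use are the standard two moves. First, the \emph{$L^2$-based Cauchy–Schwarz reduction}: write one factor, say $F_1 F_2$, and apply Cauchy–Schwarz in the convolution variable, reducing the trilinear bound to estimating $\sup \bigl|\{(\tau_2,\xi_2): \text{all three localizations hold}\}\bigr|^{1/2}$ times the obvious $L^2$ norms; the measure of this set is estimated by integrating out $\tau_2$ (cost $\sim L_{\min}$, since two modulation constraints pin $\tau_3$ once $\tau_2$ is chosen) and then bounding the $\xi_2$-measure. Second, for the $\xi$-measure one uses that, with $\xi_3=-\xi_1-\xi_2$ and $L_3$ fixed, $\xi_2$ lies where a cubic polynomial in $\xi_2$ lands in an interval of length $\sim L_{\med}$; its derivative is $\sim \xi_1\xi_2$ (up to the $\xi_1^3+\xi_2^3$-type computation), i.e.\ $\sim N_{\min}N_{\max}$ on the relevant region, giving $\xi$-measure $\lesssim \min(N_{\min}, L_{\med}/(N_{\min}N_{\max}))$ — this is exactly where the three-way split into cases (a), (b), (c) comes from, according to whether the high modulation is ``comparable-frequency'' ($N_{\max}\sim N_{\min}$), a ``low-frequency output'' resonant case ($N_2\sim N_3\gg N_1$ with $L_1$ large), or everything else. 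In the comparable-frequency case (a) one cannot extract the full $N_{\max}^{-1}$ because the transversality degenerates, which is why the exponent there is only $N_{\max}^{-1/4}$ with an extra $L_{\med}^{1/4}$; in cases (b)–(c) the Jacobian is genuinely of size $\sim N_{\min}N_{\max}^2$ and one gets the sharper $N_{\max}^{-1}\min(N_{\min}N_{\max}^2, L_{\med})^{1/2}$ (refined in (b) by the better count $N_{\max}L_{\med}/N_{\min}$ when $N_1$ is small and $L_1$ is the resonant large modulation).

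The main obstacle — and the reason this is a ``cited'' proposition rather than one proved from scratch — is the careful case analysis needed to verify that the Jacobian lower bound $\sim N_{\min}N_{\max}$ (hence the measure upper bound) actually holds \emph{uniformly} on the support, taking into account the possibility that the cubic in $\xi_2$ has critical points inside the relevant frequency annulus. One must check that near such critical points either the frequency annulus excludes them, or the competing modulation constraint already confines $\xi_2$ to a short interval, or one is precisely in case (a) where the weaker bound is claimed. I would handle this by following Tao's bookkeeping: fix the largest modulation, use the resonance identity to locate it, and then in each of the finitely many configurations of $(N_{\min},N_{\med},N_{\max})$ versus $(L_{\min},L_{\med},L_{\max})$ run the Cauchy–Schwarz-plus-Jacobian estimate, cross-checking against the symmetry under permutations of $(1,2,3)$ to reduce the number of cases. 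Since the statement is quoted verbatim from \cite[Proposition 6.1]{Tao2001}, I would ultimately just invoke that reference, and in this paper only re-derive the specialized corollaries of it that feed into the almost-conservation argument.
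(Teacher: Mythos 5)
The paper offers no proof of this proposition at all---it is quoted verbatim from \cite{Tao2001}, Proposition 6.1---so your plan of sketching the resonance-identity/Cauchy--Schwarz measure argument and then simply invoking that reference matches the paper's treatment exactly. One small caveat about your sketch (immaterial since you defer to the reference): for KdV the relevant Jacobian is $\bigl|\partial_{\xi_1}(\xi_1^3+\xi_2^3)\bigr| \sim |(\xi_1-\xi_2)(\xi_1+\xi_2)| \sim N_{\mathrm{max}}^2$ in the non-degenerate cases (degenerating only when $\xi_1\approx\xi_2$, which is what forces the weaker bound in case (a)), not $N_{\mathrm{min}}N_{\mathrm{max}}$; it is this $N_{\mathrm{max}}^2$ that produces the stated $N_{\mathrm{max}}^{-1}\min\left(N_{\mathrm{min}}N_{\mathrm{max}}^2, L_{\mathrm{med}}\right)^{1/2}$ in case (c).
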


\begin{remark}\label{rmk-prop1}
In view of \eqref{PP} the bilinear estimate \eqref{BiNLest} still holds if we replace the projection 
 $$\dot P_{N_j} \dot Q_{L_j}   \mathbb{1}_{ \left\{N_j, \ L_j>0\right\}} \quad \text{by} \quad P_{N_j}Q_{L_j}  \mathbb{1}_{ \left\{N_j, \ L_j>1 \right\}}$$ with $C(N,L)$ as in Proposition \ref{prop-dydbiest}\eqref{cnl1}--\eqref{cnl3}. Following the proof of 
 \cite[Proposition 6.1 ]{Tao2001} we also see that \eqref{BiNLest} holds if we 
 we replace $\dot Q_{L_j } \mathbb{1}_{ \left\{L_j>0\right\}}$ by $Q_{L_j} \mathbb{1}_{ \left\{L_j\ge 1 \right\}}$. 
 \end{remark}
 
 In view of Remark \ref{rmk-prop1} we have the following:
\begin{corollary}\label{cor-dydbiest}
Let $N_j>1$ and $L_j\ge 1$  ($j=1,2,3$) be dyadic numbers. The estimate
\begin{equation} \label{INL-est-i}
\norm{ P_{N_3} Q_{L_3} 
\left(  \left(  P_{N_1}  Q_{L_1} u_1 \right) \left(   P_{N_2}  Q_{L_2} u_2\right) \right)} \le C(N,L) \prod_{j=1}^2 \|    P_{N_j}  Q_{L_j} u_j \|
\end{equation}
holds with $C(N,L)$ given as in Proposition \ref{prop-dydbiest}\eqref{cnl1}--\eqref{cnl3}.
\end{corollary}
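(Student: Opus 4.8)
The plan is to deduce Corollary \ref{cor-dydbiest} directly from Proposition \ref{prop-dydbiest} by transferring the inhomogeneous Littlewood--Paley and modulation projections $P_{N_j}Q_{L_j}$ onto their homogeneous counterparts $\dot P_{N_j}\dot Q_{L_j}$, as already anticipated in Remark \ref{rmk-prop1}. Since we work under the standing assumptions $N_j > 1$ and $L_j \ge 1$, the identities \eqref{PP} give $P_{N_j}u = \dot P_{N_j} P_{N_j} u$ and $Q_{L_j}u = \dot Q_{L_j} Q_{L_j}u$, so each factor $P_{N_j}Q_{L_j}u_j$ is unchanged by inserting the homogeneous projections $\dot P_{N_j}\dot Q_{L_j}$ in front of it. The only genuine point is that the output projection $P_{N_3}Q_{L_3}$ in \eqref{INL-est-i} must be compared with $\dot P_{N_3}\dot Q_{L_3}$; here one notes that on the support of the relevant multipliers the symbols $\beta_{N_3}(\xi)$ and $\beta_{L_3}(\tau - \xi^3)$ are dominated (pointwise, up to the implicit constant built into the definition of $\beta$) by the characteristic functions $\mathbb{1}_{\{N_3/2 \le |\xi| \le 2N_3\}}$ and $\mathbb{1}_{\{L_3/2 \le |\tau-\xi^3| \le 2L_3\}}$, so that the $L^2$ norm on the left of \eqref{INL-est-i} is bounded by the corresponding homogeneous quantity. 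Applying Proposition \ref{prop-dydbiest} then yields \eqref{INL-est-i} with the same constant $C(N,L)$.

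Concretely, the steps I would carry out are: first, fix dyadic $N_j > 1$, $L_j \ge 1$ and write $f_j := P_{N_j}Q_{L_j}u_j$ for $j = 1,2$, observing via \eqref{PP} that $f_j = \dot P_{N_j}\dot Q_{L_j} f_j$, so the product $f_1 f_2$ has spatial frequency support and modulation support exactly of the type needed to feed into \eqref{BiNLest}. Second, I would bound $\| P_{N_3}Q_{L_3}(f_1 f_2)\| \lesssim \| \dot P_{N_3}\dot Q_{L_3}(f_1 f_2)\|$, using the pointwise domination of the smooth cutoffs $\beta_{N_3}, \beta_{L_3}$ by the sharp cutoffs in the definition of $\dot P_{N_3}, \dot Q_{L_3}$ together with Plancherel; one may need to sum over the at most three dyadic values of the homogeneous projection that can overlap the support of $\beta_{N_3}$ (and similarly for $\beta_{L_3}$), which only changes the constant by a harmless factor and does not affect the stated form of $C(N,L)$ since $C(N,L)$ depends on dyadic quantities only up to constants. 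Third, I would invoke Proposition \ref{prop-dydbiest} — legitimately, because its hypotheses are statements about the relative sizes of the $N_j$ and $L_j$ (the support conditions \eqref{Ncompare}, \eqref{lmaxmed}, which are automatic for a nonvanishing left-hand side) and these are unaffected by passing between homogeneous and inhomogeneous dyadic blocks — to obtain the bound $C(N,L)\prod_{j=1}^2 \| f_j\|$, which is exactly \eqref{INL-est-i}.

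I do not expect any serious obstacle here; the corollary is essentially a bookkeeping exercise packaging Remark \ref{rmk-prop1}. The one place demanding a little care is the interplay between the smooth partition-of-unity functions $\beta_N$, $\beta_L$ used to define $P_N$, $Q_L$ and the sharp indicator functions used to define $\dot P_N$, $\dot Q_L$: since $\supp \beta_N \subset \{N/2 \le |\xi| \le 2N\}$ for $N > 1$, a single smooth block $P_N$ is supported inside the union of the sharp blocks $\dot P_{N/2}, \dot P_N, \dot P_{2N}$ — but this overlap is bounded, and because $C(N,L)$ in Proposition \ref{prop-dydbiest} is stable (up to constants) under replacing any $N_j$ or $L_j$ by a comparable dyadic number, it does not alter the conclusion. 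Thus the proof reduces to citing Proposition \ref{prop-dydbiest} and Remark \ref{rmk-prop1}, and the corollary follows.
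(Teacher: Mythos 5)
Your reduction works for the spatial projections, since all $N_j>1$ and \eqref{PP} does give $P_{N_j}u=\dot P_{N_j}P_{N_j}u$ there, but it has a genuine gap at the modulation cutoffs when $L_j=1$, which the corollary explicitly allows. The identity $Q_{L}u=\dot Q_{L}Q_{L}u$ in \eqref{PP} is stated (and true) only for $L>1$: the symbol of $Q_1$ is $\chi(\tau-\xi^3)$, supported in $\{|\tau-\xi^3|\le 2\}$ and equal to $1$ near $\tau=\xi^3$, so $Q_1u_j$ contains all modulations below $1/2$ and is \emph{not} reproduced by inserting $\dot Q_1$; likewise your claimed pointwise domination of $\beta_{L_3}(\tau-\xi^3)$ by $\mathbbm{1}_{\{L_3/2\le|\tau-\xi^3|\le 2L_3\}}$ is false at $L_3=1$ (take $\tau=\xi^3$). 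So the step ``$f_j=\dot P_{N_j}\dot Q_{L_j}f_j$, now apply Proposition \ref{prop-dydbiest}'' does not cover the blocks with $L_j=1$, and unlike the spatial case the overlap is not with finitely many homogeneous blocks: $\{|\tau-\xi^3|\le 2\}$ meets the shells $\dot Q_{L'}$ for \emph{all} dyadic $L'\le 2$.

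The repair is exactly what the paper delegates to the second sentence of Remark \ref{rmk-prop1}: either rerun Tao's proof of \eqref{BiNLest}, which only uses the measure/size of the modulation supports and so tolerates the fattened projection $Q_1$, or decompose each $Q_1$ (input or output) as $\sum_{L'\le 2}\dot Q_{L'}Q_1$ (modulo the null set $\tau=\xi^3$), apply the homogeneous estimate \eqref{BiNLest} to every piece, and resum. The resummation is where something must be checked: since $N_1,N_2,N_3>1$, the constraint \eqref{lmaxmed} forces $L_{\mathrm{max}}\gtrsim N_{\mathrm{min}}N_{\mathrm{max}}^2\gg1$, so a small block $L'\le 2$ can only occur as $L_{\mathrm{min}}$ or $L_{\mathrm{med}}$, and in each line of Proposition \ref{prop-dydbiest} these enter $C(N,L)$ with nonnegative powers; hence the dyadic sum over $L'\le 2$ converges geometrically (Cauchy--Schwarz in $L'$ on the input side) and is comparable to the single term $L'=1$, giving \eqref{INL-est-i} with the stated $C(N,L)$. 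Without this summation argument (or an appeal to Tao's proof as in Remark \ref{rmk-prop1}), your proposal proves the corollary only for $L_j>1$.
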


This Corollary is used to prove the following Lemma.
\begin{lemma}[See \cite{Guo2009}]\label{lm-keybiest}
For dyadic numbers $N_j\ge 1$ ($j=1,2,3$) we have the following: 
\begin{enumerate}[(i)]
\item \label{keybiest1}The bilinear estimate
\begin{equation} \label{keybi-est}
 \norm {  
\Lambda^{-1} P_{N_3} \partial_x \left ( u_{N_1} v_{N_2}  \right) }_{X}  \le C(N)
\norm {  u_{N_1} }_{X}  \norm {  v_{N_2} }_{X} ,
\end{equation}
holds with $C(N)$ as follows:
\begin{equation}
\label{CN-biest}
C(N)\sim 
\begin{cases}
& N_1^{-1} N_2^{-\frac12+}  , \quad \text{if} \  \ N_3 \sim N_2 \gg  N_1>1,
\\
& N_2^{-\frac34}  ,  \quad   \quad   \quad \ \ \text{if} \ \ N_1 \sim N_2 \sim N_3 \gg 1,
\\
& N_1^{-\frac32+},   \quad   \quad  \quad   \ \text{if} \ \ N_1 \sim N_2 \gg N_3=1,
\\
& \max\left( N_1^{-\frac32} , N_1^{-2+}  N_3^{\frac12} \right)  , \quad \text{if} \ \  N_1 \sim N_2 \gg N_3>1.
\end{cases}
\end{equation}
\item \label{keybiest2} If $ N_3 \sim N_2 \gg N_1=1 $ then
\begin{equation}
\label{hlh-est}
  \norm {  
\Lambda^{-1} P_{N_3} \partial_x \left ( u_{N_1} v_{N_2}  \right) }_{X}     \lesssim  \norm {  u_{N_1}  }_{L^2_xL_t^\infty} 
\norm {  v_{N_2} }_{X} .
\end{equation}
\item \label{keybiest3} Let $I$ be a bounded interval. If $1\le N_1, N_2 , N_3\lesssim 1$, then
\begin{equation}
\label{lll-est}
\norm {  
 \mathbb{1}_I (t) \Lambda^{-1} P_{N_3} \partial_x  \left ( u_{N_1} v_{N_2}   \right) }_{X} \lesssim  \norm {  u_{N_1} }_{L_t^\infty L_x^2} 
\norm {  v_{N_2}  }_{L_t^\infty L_x^2} .
\end{equation}

\end{enumerate}

\end{lemma}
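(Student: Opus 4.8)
The plan is to reduce the three estimates to dyadic sums of the bilinear estimate in Corollary \ref{cor-dydbiest}, carefully tracking the modulation weights coming from $\Lambda^{-1}$ and the derivative $\partial_x$. Write $F_{N_3} = P_{N_3}\partial_x(u_{N_1} v_{N_2})$, so that the symbol of $\partial_x$ contributes a factor $\sim N_3$, and recall that $\Lambda^{-1}$ multiplies by $\angles{\tau-\xi^3}^{-1} \sim L_3^{-1}$ on the piece with output modulation $L_3$. Decomposing $u_{N_1} = \sum_{L_1} Q_{L_1} u_{N_1}$, $v_{N_2} = \sum_{L_2} Q_{L_2} v_{N_2}$, and the output by $Q_{L_3}$, the $X$-norm on the left of \eqref{keybi-est} becomes
\[
\norm{\Lambda^{-1} F_{N_3}}_X \sim \sum_{L_3\ge 1} L_3^{\frac12} L_3^{-1} N_3 \Bignorm{ Q_{L_3}\Big( \sum_{L_1, L_2\ge 1} (Q_{L_1}u_{N_1})(Q_{L_2}v_{N_2}) \Big) }.
\]
For each fixed $(L_1,L_2,L_3)$ I would apply Corollary \ref{cor-dydbiest} with the constant $C(N,L)$ from Proposition \ref{prop-dydbiest}, then sum in $L_1, L_2, L_3$ using the constraint \eqref{lmaxmed} (namely $L_{\max} \sim \max(N_{\min} N_{\max}^2, L_{\med})$) together with the dyadic summation rule in Remark \ref{rmk-dyadicsum}. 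The factor $L^{\frac12}_{\min}$ in every branch of $C(N,L)$ is exactly what is needed to absorb the $L_j^{\frac12}$ weights implicit in passing from $\norm{\cdot}$ to $\norm{\cdot}_X$ on the right-hand side (since $\norm{u_{N_j}}_X = \sum_{L_j} L_j^{\frac12}\norm{Q_{L_j}u_{N_j}}$), while the $\frac12$-power of the output weight $L_3^{-\frac12}$ must be paid by the $\min(N_{\min}N_{\max}^2, L_{\med})^{\frac12}$ and $N^{-\frac14}_{\max} L^{\frac14}_{\med}$ factors; this is where the various regimes in \eqref{CN-biest} come from. Concretely: in the regime $N_1\sim N_2\sim N_3$ one uses branch \eqref{cnl1} or \eqref{cnl3} and obtains $N_3 \cdot N_3^{-1} \cdot (\text{gain}) = N_2^{-\frac34}$ after the logarithmically-divergent-looking but actually convergent modulation sum; in the high-high-to-low regimes one uses branch \eqref{cnl2}, and the $N_{\max}/N_{\min}$ factor there is responsible for the $N_1^{-2+} N_3^{\frac12}$ term and, when $L_{\med}$ is summed against a weight of the form $\sum_{L_{\med}} L_{\med}^{-\frac14}$, produces the $+$ loss (the $\varepsilon$ in $N_2^{-\frac12+}$, $N_1^{-\frac32+}$), exactly as in \cite{Guo2009}.

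For part \eqref{keybiest2}, where $N_1 = 1$, the point is that the low-frequency factor $u_{N_1} = u_1$ is no longer controlled in $X$ but only in $L^2_x L^\infty_t$; here I would not sum $L_1$ but instead estimate $\norm{(Q_{L_3}\text{-output})}$ by placing $u_1$ in $L^\infty_{t,x}$-type norms via the embedding $L^2_x L^\infty_t$ and $v_{N_2}$ in $X$ using the Strichartz/smoothing estimates of Lemma \ref{lm-strz}(b) — essentially, bound $\norm{P_{N_3}\partial_x(u_1 v_{N_2})}$ crudely by $N_3 \norm{u_1}_{L^2_x L^\infty_t}\norm{v_{N_2}}_{L^\infty_x L^2_t}$ and use \eqref{stre4} with $M = N_2 \sim N_3$, which gains $N_3^{-1}$ and cancels the derivative, then sum the output modulation against $\sum_{L_3} L_3^{-\frac12}\norm{\Lambda^{-1}Q_{L_3}(\cdots)}$ — the factor $L_3^{-\frac12}$ from $\Lambda^{-1}$ beating $L_3^{\frac12}$ is what keeps this convergent. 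Part \eqref{keybiest3} is the easiest: when all three frequencies are $O(1)$ there are no derivative losses to worry about, $\partial_x$, $P_{N_j}$ and the time cutoff $\mathbb{1}_I$ are all harmless bounded operators (up to the localization estimate \eqref{X-timeloc}), and one simply bounds $\norm{\mathbb{1}_I \Lambda^{-1} P_{N_3}\partial_x(u_{N_1}v_{N_2})}_X$ by $\norm{u_{N_1} v_{N_2}}_{L^2_{t,x}(I\times\R)} \lesssim |I|^{1/2}\norm{u_{N_1}}_{L^\infty_t L^2_x}\norm{v_{N_2}}_{L^\infty_{t}L^2_x}$ using Hölder in time and Sobolev in space (again legitimate because all frequencies are bounded), absorbing the finitely many dyadic pieces trivially.

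The main obstacle I expect is the bookkeeping of the modulation sums in part \eqref{keybiest1}, specifically verifying that in each of the four frequency regimes the worst-case choice of $(L_1, L_2, L_3)$ subject to \eqref{lmaxmed} produces exactly the claimed power of $N$ and not a worse one, and pinning down precisely where the endpoint fails so that the $N^{+}$ loss genuinely appears only in the three branches where it is stated (and is absent in the $N_1\sim N_2\sim N_3$ branch, which must close cleanly at $N_2^{-3/4}$). This requires splitting into the subcases $L_{\max}\sim N_{\min}N_{\max}^2$ versus $L_{\max}\sim L_{\med}$, and within the first subcase further tracking which of $L_1, L_2, L_3$ is the maximum — this is the combinatorial heart of the argument and mirrors the case analysis of \cite[Section 3]{Guo2009}, so I would organize the proof as a table of these subcases and in each one cite the appropriate branch of Proposition \ref{prop-dydbiest} and apply Remark \ref{rmk-dyadicsum}.
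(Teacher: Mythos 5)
Your parts \eqref{keybiest2} and \eqref{keybiest3} are fine and essentially identical to the paper's argument: for \eqref{hlh-est} one pays $N_3\sim N_2$ for $\partial_x$, uses Cauchy--Schwarz in the output modulation against the $L_3^{-1}$ gain from $\Lambda^{-1}$ to reduce to $\norm{u_1 v_{N_2}}$, and then H\"older $L^2_xL^\infty_t\times L^\infty_xL^2_t$ plus the smoothing bound \eqref{stre4} cancels the derivative; for \eqref{lll-est} the same Cauchy--Schwarz reduction to $\norm{\mathbbm{1}_I u_{N_1}v_{N_2}}$ followed by H\"older in time and Bernstein/Sobolev at bounded frequencies gives the claim (note your aside about \eqref{X-timeloc} is not needed and is slightly off, since $\mathbbm{1}_I$ is not Schwartz and sharp time cutoffs are not bounded on $X$; the reduction you describe avoids this issue, as does the paper).

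Part \eqref{keybiest1}, however, is where the lemma actually lives, and there your proposal is a plan rather than a proof: you explicitly defer ``the combinatorial heart'' (the case analysis over which of $L_1,L_2,L_3$ is maximal under \eqref{lmaxmed}, in each of the four frequency regimes), and the one concrete mechanism you do assert for the $\varepsilon$-losses is wrong. A sum of the form $\sum_{L_{\mathrm{med}}\ge 1}L_{\mathrm{med}}^{-\frac14}$ converges and produces no loss; in the paper the $N^{0+}$ factors arise from \emph{zero-exponent} logarithmic sums: in the regime $N_3\sim N_2\gg N_1>1$ (and similarly $N_1\sim N_2\gg N_3>1$) from the unconstrained sum over the output modulation $L_3\lesssim L_{\mathrm{max}}\ll N_{\mathrm{max}}^6$ in the subcase where $L_{\mathrm{max}}\sim L_1$ or $L_2$ (while the subcase $L_{\mathrm{max}}\sim L_3$ closes without loss, which is why the $N_1\sim N_2\sim N_3$ branch is clean at $N_2^{-3/4}$), and in the regime $N_1\sim N_2\gg N_3=1$ from the sum over output frequencies. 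This points to a second, genuinely missing ingredient in your outline: when $N_3=1$ the block $P_1$ contains arbitrarily small frequencies, so before Corollary \ref{cor-dydbiest} can be applied one must decompose the output with the homogeneous projections $\dot P_M$, $0<M\le 1$, keep the derivative factor as $M$ (not $N_3$), use the resonance relation \eqref{lmaxmed} with $N_{\mathrm{min}}=M$, discard $M<N_1^{-2}$ trivially, and then the dyadic sum over $N_1^{-2}<M\le 1$ via \eqref{dyadicsum} is exactly what produces $N_1^{-\frac32+}$. Without this $M$-decomposition and without carrying out the $L_{\mathrm{max}}\sim N_{\mathrm{min}}N_{\mathrm{max}}^2$ versus $L_{\mathrm{max}}\sim L_{\mathrm{med}}$ dichotomy (with the large-modulation regime $L_{\mathrm{max}}\gtrsim N_{\mathrm{max}}^6$ handled by Proposition \ref{prop-dydbiest}\eqref{cnl3} and the intermediate one by \eqref{cnl2}), the exponents in \eqref{CN-biest}, in particular the competition $\max\left(N_1^{-\frac32},N_1^{-2+}N_3^{\frac12}\right)$, cannot be verified, so part \eqref{keybiest1} remains unproved as written.
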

For completeness, and since the notation and setup of this paper is slightly different from \cite{Guo2009} we include the proof of Lemma \ref{lm-keybiest} in Appendix A.

In the case of high-high:low frequency interaction, i.e., when $ N_1 \sim N_2 \gg N_3=1$ the factor $C(N)\sim N_1^{-3/2+}$ (see third line in \eqref{CN-biest}) in the dyadic bilinear estimate \eqref{keybi-est} is not good enough to obtain \eqref{biest2}. Fortunately, Guo improved  this estimate to $C(N)\sim N_1^{-3/2}$ which is given as follows.
\begin{lemma} [  \cite{Guo2009}:  $L_x^2 L_t^\infty$-estimate] \label{lm-lowfreqbiest} 
Assume $N_1\sim N_2\gg 1$.  
\begin{enumerate}[(i)]
\item \label{lowfreqbiest1}  Let
$u_{N_1}(t)=S(t)f_{N_1} $ and $ v_{N_2}(t)=S(t)g_{N_2} $
be two free solutions of the Airy equation, where $f_{N_1}$, $ g_{N_2}\in L^2$.
Then
\begin{equation}\label{x1est1}
\norm{\chi(t) \int_0^t S(t-t')  P_1 \partial_x\left[ (u_{N_1}  v_{N_2})(t') \right]\, dt'}_{L_x^2 L_t^\infty } 
\lesssim N_1^{-\frac32} \norm{ f_{N_1}}_{L^2_x} \norm{ g_{N_2}}_{L^2_x}.
\end{equation}

\item \label{lowfreqbiest2} For all $u_{N_1} , v_{N_2} \in  X$, we have by the transfer principle (see e.g. \cite [Lemma 3.2]{Guo2009} )
\begin{equation}\label{x1est2}
\norm{\chi(t) \int_0^t S(t-t')  P_1 \partial_x \left[ (u_{N_1}  v_{N_2})(t') \right]\, dt'}_{L_x^2 L_t^\infty } 
\lesssim N_1^{-\frac32} \norm{ u_{N_1}}_{X} \norm{ v_{N_2}}_{X}.
\end{equation}
\end{enumerate}

\end{lemma}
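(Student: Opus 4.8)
The plan is to prove \eqref{x1est1} directly, exploiting an algebraic cancellation particular to the high-high:low frequency interaction, and then to deduce \eqref{x1est2} from it by the transfer principle of \cite[Lemma 3.2]{Guo2009}, as already indicated in the statement.

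First I would compute the spatial Fourier transform of the Duhamel term in \eqref{x1est1}. Since $u_{N_1}=S(t)f_{N_1}$ and $v_{N_2}=S(t)g_{N_2}$ are free Airy solutions, writing $\xi=\xi_1+\xi_2$ and $\phi=\xi_1^3+\xi_2^3$ and evaluating the elementary integral $\int_0^t e^{i(t-t')\xi^3}e^{it'\phi}\,dt'=(e^{it\phi}-e^{it\xi^3})/(i(\phi-\xi^3))$, one finds that this spatial Fourier transform equals, up to a harmless constant,
\begin{equation*}
\chi(t)\int \widehat{f_{N_1}}(\xi_1)\,\widehat{g_{N_2}}(\xi-\xi_1)\,\frac{\xi\,\beta_1(\xi)}{\phi-\xi^3}\,\bigl(e^{it\phi}-e^{it\xi^3}\bigr)\,d\xi_1 .
\end{equation*}
The key point is the KdV resonance identity $\phi-\xi^3=-3\xi\xi_1\xi_2$: the factor $\xi$ produced by $\partial_x$ cancels the $\xi$ in the denominator, leaving the non-singular multiplier $-1/(3\xi_1\xi_2)$, which on the support $\{|\xi_1|\sim|\xi_2|\sim N_1\}$ has size $\sim N_1^{-2}$. (At $\xi=0$ the bracket vanishes, since then $\phi=\xi^3$, so no singularity arises.) This cancellation is unavailable in the general dyadic bilinear estimate and is exactly what removes the logarithmic loss responsible for the $+$ in the third line of \eqref{CN-biest}.

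Next I would split the bracket into its two oscillatory pieces, writing the Duhamel term as $w_1-w_2$, where, with $F:=\mathcal F_x^{-1}[\widehat{f_{N_1}}(\xi)/\xi]$ and $G:=\mathcal F_x^{-1}[\widehat{g_{N_2}}(\xi)/\xi]$ (so that $\norm{F}_{L^2_x}\lesssim N_1^{-1}\norm{f_{N_1}}_{L^2_x}$ and likewise for $G$, both localized at frequency $\sim N_1$),
\begin{equation*}
w_1=-\frac13\,\chi(t)\,P_1\bigl[(S(t)F)(S(t)G)\bigr],\qquad
w_2=-\frac13\,\chi(t)\,S(t)\bigl[P_1(FG)\bigr].
\end{equation*}
For $w_1$, since $P_1$ acts by convolution in $x$ with a Schwartz kernel it is bounded on $L^2_xL^\infty_t$, so H\"{o}lder in $x$ together with the maximal function estimate \eqref{str3} gives
\begin{equation*}
\norm{w_1}_{L^2_xL^\infty_t}\lesssim \norm{S(t)F}_{L^4_xL^\infty_t}\,\norm{S(t)G}_{L^4_xL^\infty_t}
\lesssim N_1^{\frac14}\norm{F}_{L^2_x}\cdot N_1^{\frac14}\norm{G}_{L^2_x}
\lesssim N_1^{-\frac32}\norm{f_{N_1}}_{L^2_x}\norm{g_{N_2}}_{L^2_x}.
\end{equation*}
For $w_2$, covering $\supp\chi$ by finitely many unit time intervals and applying the smoothing estimate \eqref{str2} with $N=1$ reduces matters to bounding $\norm{P_1(FG)}_{L^2_x}\lesssim\norm{F}_{L^2_x}\norm{G}_{L^\infty_x}$, and Cauchy--Schwarz over $\{|\xi|\sim N_1\}$ gives $\norm{G}_{L^\infty_x}\le\norm{\widehat G}_{L^1}\lesssim N_1^{-\frac12}\norm{g_{N_2}}_{L^2_x}$, so that $\norm{w_2}_{L^2_xL^\infty_t}\lesssim N_1^{-\frac32}\norm{f_{N_1}}_{L^2_x}\norm{g_{N_2}}_{L^2_x}$ as well. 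Adding the two bounds proves \eqref{x1est1}.

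Finally, \eqref{x1est2} follows from \eqref{x1est1} by the transfer principle \cite[Lemma 3.2]{Guo2009}: decomposing $u_{N_1}=\sum_{L_1}Q_{L_1}u_{N_1}$ and $v_{N_2}=\sum_{L_2}Q_{L_2}v_{N_2}$ into modulated free waves, the $\ell^1$-in-$L$ structure of the $X$-norm with weight $L^{1/2}$ lets the free-solution estimate be summed with no loss, yielding the claimed $X\times X\to L^2_xL^\infty_t$ bound. The main obstacle is really only the first step, namely spotting and using the resonance cancellation so that the effective multiplier is $\sim N_1^{-2}$ with no singularity at $\xi=0$; once that is in hand, the remaining estimates are routine applications of the Strichartz and maximal-function bounds of Lemma \ref{lm-strz}, and the transfer to the $X$-space is entirely standard.
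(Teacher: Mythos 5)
Your proof of part \eqref{lowfreqbiest1} is correct, and it is essentially the known argument (the paper itself does not prove this lemma but cites \cite{Guo2009}): the resonance identity $\xi_1^3+\xi_2^3-(\xi_1+\xi_2)^3=-3\xi\xi_1\xi_2$ cancels the derivative loss exactly in this high-high:low interaction, and after the splitting into the two oscillatory pieces the bounds via \eqref{str3} for the product of the two modified free waves and via \eqref{str2} (with $N=1$) for the $S(t)P_1(FG)$ piece are all legitimate; the bookkeeping $\norm{F}_{L^2}\lesssim N_1^{-1}\norm{f_{N_1}}_{L^2}$, $\norm{G}_{L^\infty_x}\lesssim N_1^{-1/2}\norm{g_{N_2}}_{L^2}$ is right and yields $N_1^{-3/2}$.

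The gap is in your deduction of part \eqref{lowfreqbiest2}. The extension/transfer lemma you invoke requires the free-solution estimate to hold for \emph{modulated} free waves $e^{it\lambda}S(t)f$, uniformly in $\lambda$, because writing $Q_{L}u_{N_1}$ as a superposition of free waves introduces exactly such modulations. For linear estimates like \eqref{stre1}--\eqref{stre4} the factor $e^{it\lambda}$ is harmless, but here it enters the Duhamel denominator: the resonance factor becomes $-3\xi\xi_1\xi_2+\lambda_1+\lambda_2$, and the cancellation $\xi/(\phi-\xi^3)=-1/(3\xi_1\xi_2)$ that drives your whole argument is destroyed when $\lambda_1+\lambda_2$ is comparable to $3\xi\xi_1\xi_2\sim |\xi|N_1^2$. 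In fact the modulated analogue of \eqref{x1est1} fails at strength $N_1^{-3/2}$: taking $\widehat f,\widehat g$ to be characteristic functions of unit intervals near $\pm N_1$ and $\lambda_1+\lambda_2\approx -3N_1^2\xi_0$ with $|\xi_0|\sim 1$, the near-resonant region $|\xi-\xi_0|\lesssim N_1^{-2}$ produces an output of size $\sim N_1^{-2}$ on an $x$-interval of length $\sim N_1^{2}$, hence an $L^2_xL^\infty_t$ norm $\sim N_1^{-1}\gg N_1^{-3/2}$. So \eqref{x1est2} is \emph{not} obtained from \eqref{x1est1} by Minkowski and the bound $\int\norm{f_\lambda}_{L^2}d\lambda\lesssim\sum_L L^{1/2}\norm{Q_Lu_{N_1}}$ alone: the pieces with $L_{\max}$ at or above the resonance size must be treated by a separate argument in which the $L^{1/2}$ weights of the $X$-norm (together with the $L^2$ bilinear estimates of Corollary \ref{cor-dydbiest}, as in the proof of Lemma \ref{lm-keybiest}) supply the missing decay, while your free-wave/maximal-function argument covers only the low-modulation (near-free) contributions. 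This case analysis is exactly what Guo's proof carries out, and it is what your phrase ``summed with no loss'' glosses over; as it stands, part \eqref{lowfreqbiest2} is not proved.
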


Lemma \ref{lm-keybiest} and Lemma \ref{lm-lowfreqbiest} together are key to obtain the bilinear estimate \eqref{biest2} which is used  to prove the end-point well-posedness result for \eqref{kdv}. We include the proof of the following Lemma (the proof will be reused  later).
\begin{lemma}[See \cite{Guo2009}]\label{lm-bilinearest}
Define the bilinear operator 
$$
\mathcal B(u, v)(t)=\chi(t/4) \int_0^t S(t-t') \partial_x \left( [ \chi u \cdot  \chi v ](t') \right) \, dt'.
$$
Assume $s\in[-3/4, 0]$. Then for all $u, v \in \bar X^{s} $ we have
\begin{equation}\label{bilinearest}
\norm{ \mathcal B (u, v)}_{\bar X^{s}} 
\lesssim \left(\norm{ u}_{\bar X^{s }} \norm{ v}_{\bar X^{-\frac34}} + \norm{ u}_{\bar X^{-\frac34} }  \norm{ v}_{\bar X^s}  \right).
\end{equation}
\end{lemma}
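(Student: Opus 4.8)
\textbf{Proof plan for Lemma \ref{lm-bilinearest}.}

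The plan is to decompose both $u$ and $v$ dyadically in spatial frequency, write $\mathcal B(\chi u, \chi v) = \sum_{N_1, N_2} \mathcal B(\chi u_{N_1}, \chi v_{N_2})$, further decompose the output in frequency $N_3$, and estimate each piece $P_{N_3}\mathcal B(\chi u_{N_1}, \chi v_{N_2})$ using the Duhamel estimate \eqref{duhmest} of Lemma \ref{lm-linearest}\eqref{linearest2} together with the dyadic bilinear estimates of Lemma \ref{lm-keybiest}, Lemma \ref{lm-lowfreqbiest} (and, for the low--frequency output piece $N_3=1$, the $L^2_x L^\infty_t$-based estimate \eqref{hlh-est} and \eqref{lll-est}). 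By the support condition \eqref{Ncompare} only the frequency configurations listed in \eqref{CN-biest} occur (up to relabelling $u \leftrightarrow v$), so the sum over $(N_1,N_2,N_3)$ splits into the cases: (a) $N_3 \sim N_2 \gg N_1 > 1$ (high-high:... wait, rather low-high:high), (b) $N_1 \sim N_2 \sim N_3$, (c) $N_1 \sim N_2 \gg N_3 = 1$, (d) $N_1 \sim N_2 \gg N_3 > 1$, plus the genuinely low-frequency case $N_1,N_2,N_3 \lesssim 1$. The time cutoffs $\chi(t/4)$ and the inner $\chi$'s are harmless: $\chi(t/4)\cdot(\text{Duhamel}) $ is controlled via \eqref{X-timeloc} and \eqref{duhmest}, and the estimates \eqref{X-timeloc}, \eqref{emb1}, \eqref{emb2} absorb the remaining cutoffs.

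The key mechanism is that in each case the dyadic constant $C(N)$ from \eqref{CN-biest} (or $N_1^{-3/2}$ from \eqref{x1est2} when $N_3=1$) beats the weight mismatch. Concretely, for a term with output frequency $N_3$ and inputs at $N_1, N_2$ one wants
\[
N_3^{s}\, C(N)\, \lesssim\, \max\!\left( N_1^{s} N_2^{-3/4},\ N_1^{-3/4} N_2^{s}\right)
\]
uniformly, with a factor that is summable in the two free dyadic parameters after Cauchy--Schwarz. In case (b) all frequencies are comparable and $C(N)\sim N_2^{-3/4}$, giving exactly $N_2^{s-3/4}$ with room to spare for summation (using $s \le 0$ so that $N^{s-3/4}$ decays). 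In case (a), $N_3 \sim N_2$ and $C(N)\sim N_1^{-1}N_2^{-1/2+}$, so $N_3^s C(N) \sim N_2^{s-1/2+} N_1^{-1}$, which we bound by $N_1^{-3/4}N_2^{s}\cdot N_2^{-\frac14 + + \frac34 - ?}$—one checks the leftover powers are negative, making both sums converge; the $N_1=1$ slice of this case is exactly where \eqref{hlh-est} is invoked instead, trading $\norm{u_1}_X$ for $\norm{u_1}_{L^2_x L^\infty_t}$ which is the component of the $\bar X^s$-norm at $N_1=1$. Cases (c) and (d) are the high-high $\to$ low outputs: here $N_1 \sim N_2$ so the right side is $\sim N_1^{s - 3/4}$, and we need $N_3^s C(N) \lesssim N_1^{s-3/4}$; for $N_3 = 1$ this is $C(N) \lesssim N_1^{-3/2}$ (since $s - 3/4 \ge -3/2$ for $s \ge -3/4$), which is precisely Guo's improved estimate \eqref{x1est2}—the $N_1^{-3/2+}$ from \eqref{CN-biest} would be just barely insufficient—and for $1 < N_3 \lesssim N_1$ one uses $C(N)\sim \max(N_1^{-3/2}, N_1^{-2+}N_3^{1/2})$ and checks $N_3^s \max(\cdots) \lesssim N_1^{s-3/4}$ with a summable surplus in $N_3$ (here $s \le 0$ helps again). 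The fully low-frequency block $N_1,N_2,N_3 \lesssim 1$ is a finite sum handled by \eqref{lll-est} and \eqref{emb3}.

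The main obstacle is bookkeeping: one must verify in each of the four-plus frequency regimes that the exponents left over after pairing $N_3^s C(N)$ against the target weight are strictly negative in every free summation variable, so that the dyadic sums converge (via \eqref{dyadicsum} and Cauchy--Schwarz) to the stated bilinear bound; the delicate point is the endpoint $s = -3/4$ in the high-high $\to$ low interaction, where only the $N_1^{-3/2}$ (not $N_1^{-3/2+}$) estimate closes, which is exactly why Lemma \ref{lm-lowfreqbiest} is needed rather than just Lemma \ref{lm-keybiest}. A secondary technical point is correctly splitting each input $u_{N_j}$ into its low-frequency part ($N_j=1$, measured in $L^2_x L^\infty_t$) and high-frequency part ($N_j>1$, measured in $X$) so that the output norm is reassembled as the genuine $\bar X^s$-norm, and checking that the time localizations commute with all of this via \eqref{X-timeloc}, \eqref{linest}, and \eqref{emb1}--\eqref{emb2}.
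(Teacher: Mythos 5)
Your plan is correct and follows essentially the same route as the paper's proof: split the $\bar X^{s}$-norm of $\mathcal B(u,v)$ into the $P_1$ component, handled by Guo's $L_x^2L_t^\infty$ estimate \eqref{x1est2} which supplies the sharp $N_1^{-3/2}$ constant needed at the endpoint $s=-3/4$, and the $N_3>1$ components, reduced via the Duhamel bound \eqref{duhmest} to the dyadic estimates \eqref{CN-biest}, \eqref{hlh-est}, \eqref{lll-est} and then summed case by case with \eqref{dyadicsum} and Cauchy--Schwarz. The only small slip is your initial attribution of \eqref{hlh-est} to the low-frequency \emph{output} piece $N_3=1$; as you correctly state later, it is used for the low-frequency \emph{input} slice $N_1=1$ with high output $N_3\sim N_2$.
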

\begin{proof}
By definition
\begin{align*}
\norm{    \mathcal B(u, v) }^2_{ \bar X^{s} } &=  \mathcal I_1 + \mathcal I_2,
\end{align*}
where
\begin{align*}
 \mathcal I_1&=  \norm{P_1   \mathcal B(u, v)}^2_{L_x^2 L^\infty_t},
\\
\mathcal I_2&=\sum_{N_3 >
1}N_3 ^{2s} \norm{P_{N_3}   \mathcal B(u, v)}^2_{X} .
\end{align*}

\subsection*{Estimate for $\mathcal I_1$}
It suffices to show 
\begin{equation}\label{I1}
\mathcal I _1  
 \lesssim \norm{ u }^2_{\bar X^{-\frac34 } } \norm{ v }^2_{\bar X^{-\frac34 } } .
\end{equation}
Decomposing $u$ and $v$ we have
\begin{align*}
\mathcal I_1&\lesssim \left(  \sum_{N_1,N_2 \ge 1
}\norm{P_1  \mathcal B(u_{N_1},v_{N_2})}_{L_x^2 L^\infty_t } \right)^2  .
\end{align*}

By symmetry we may assume $N_1\le N_2$.  If $N_2\lesssim 1$ we use Lemma \ref{lm-embed}\eqref{emb2}, Lemma \ref{lm-linearest}\eqref{linearest2},
Lemma \ref{lm-keybiest}\eqref{keybiest3} and  Lemma \ref{lm-embed}\eqref{emb1} to obtain
\begin{align*}
\mathcal I _1  &\lesssim \left (\sum_{1\le N_1\le N_2 \lesssim 1
} 
 \norm{ \Lambda^{-1} P_1 \partial_x( \chi u_{N_1} \cdot \chi v_{N_2})}_{X }  \right)^2
 \\
&\lesssim \left( \sum_{1\le N_1\le N_2 \lesssim 1
} 
 \norm{ u_{N_1} }_{L_t^\infty L_x^2} \norm{ v_{N_2} }_{L_t^\infty L_x^2} \right)^2
 \\
 &\lesssim \norm{ u }^2_{\bar X^{-\frac34 } } \norm{ v }^2_{\bar X^{-\frac34 } } .
\end{align*}

If $N_1\sim N_2\gg 1$, then by Lemma \ref{lm-lowfreqbiest}\eqref{lowfreqbiest2}, and Cauchy Schwarz in  $N_1\sim N_2$ we have
\begin{align*}
\mathcal I _1  &\lesssim \left(  \sum_{ N_1\sim N_2 \gg 1
}  N_1^{-\frac32}
 \norm{ \chi u_{N_1} }_{ X} \norm{\chi  v_{N_2} }_{X} \right)^2
 \\
 &\lesssim \norm{ u }^2_{\bar X^{-\frac34 } } \norm{ v }^2_{\bar X^{-\frac34 } } .
\end{align*}

\begin{remark}\label{logloss}  In the case $N_1\sim N_2 \gg N_3=1$ if we use
 Lemma \ref{lm-keybiest}\eqref{keybiest1} with $C(N)$ as in the third line of \eqref{CN-biest} instead of Lemma \ref{lm-lowfreqbiest}\eqref{lowfreqbiest2} we would obtain
\begin{align*}
\mathcal I_1&\lesssim \left(  \sum_{ N_1\sim N_2 \gg 1
}  N_1^{-\frac32+}
 \norm{  u_{N_1} }_{ X} \norm{ v_{N_2} }_{X} \right)^2
 \\
 &\lesssim \norm{ u }^2_{\bar X^{-\frac34+ } } \norm{ v }^2_{\bar X^{-\frac34+ } } .
\end{align*}
Thus, using Lemma \ref{lm-keybiest} in the case of high:high-low  frequency interaction case introduces a logarithmic loss in the estimate for \eqref{biest2}.
\end{remark}


\subsection*{Estimate for $ \mathcal I_2$} 
We want to show 
\begin{equation}\label{I2}
\mathcal I _2  
 \lesssim\left( \norm{ u}_{\bar X^{s }} \norm{ v}_{\bar X^{-\frac34}} + \norm{ u}_{\bar X^{-\frac34} }  \norm{ v}_{\bar X^s}  \right).
\end{equation}
Decomposing $u$ and $v$, and using Lemma \ref{lm-linearest}\eqref{linearest2} we obtain
\begin{align*}
\mathcal I_2&=\sum_{N_3 >
1} \left(  \sum_{N_1,N_2 \ge 1
} N_3 ^{s}\norm{P_{N_3}  \mathcal B(u_{N_1},v_{N_2})}_{X} \right)^2 
\\
&=\sum_{N_3 >
1} \left ( \sum_{N_1,N_2 \ge 1
} N_3 ^{s}\norm{ \Lambda^{-1} P_{N_3}  \partial_x( \chi u_{N_1} \cdot \chi v_{N_2})}_{X } \right)^2 
\\
&:=\mathcal I_{3} + \mathcal I_{4},
\end{align*}
where
\begin{align*}
\mathcal I_{3}&=\sum_{ N_3>1} \left ( \sum_{  N_2 \ge  N_1\ge 1} (\cdot)\right)^2, \quad  \mathcal I_{4}=\sum_{ N_3>1} \left ( \sum_{   \  N_1 \ge  N_2  \ge 1}  (\cdot) \right)^2 .
\end{align*}
Here $(\cdot)$ represents the argument in the inner summation. 

By symmetry we may only estimate $\mathcal I_3$. Thus, it suffices to prove
\begin{equation*}
\mathcal I_{3}\lesssim \norm{ u }^2_{\bar X^{-\frac34 } } \norm{ v }^2_{\bar X^{s }} .
\end{equation*}
In view of \eqref{Ncompare} this reduces further to
\begin{equation}\label{I3}
\mathcal I_{3k}\lesssim \norm{ u }^2_{\bar X^{-\frac34 } } \norm{ v }^2_{\bar X^{s }}  \quad  (k=1,\cdots, 5) ,
\end{equation}
where
\begin{align*}
\mathcal I_{31}&= \sum_{ N_3\sim 1} \left ( \sum_{ 1\le N_1\le  N_2\lesssim  1} (\cdot)\right)^2,  
\quad 
\mathcal I_{32}= \sum_{ N_3>1 } \left ( \sum_{ 1= N_1\ll N_2\sim N_3 } (\cdot)\right)^2,  
\\
\mathcal I_{33}&= \sum_{ N_3>1 } \left ( \sum_{ 1< N_1\ll N_2\sim N_3 } (\cdot)\right)^2,  
\ \ \mathcal I_{34}= \sum_{ N_3\gg 1 } \left ( \sum_{  N_1\sim N_2\sim N_3 } (\cdot)\right)^2,  
\quad 
\mathcal I_{35}= \sum_{ N_3>1 } \left ( \sum_{  N_1\sim N_2\gg N_3 } (\cdot)\right)^2.
\end{align*}

We establish \eqref{I3} as follows.

(i). $\mathcal I_{31}$: 
By Lemma \ref{lm-keybiest}\eqref{keybiest3} and Lemma \ref{lm-embed}\eqref{emb1} we have 
\begin{align*}
\mathcal I_{31}&\lesssim  \sum_{N_3 \sim 1
} \left(  \sum_{\ 1\le N_1 \le N_2\lesssim 1 } N_3 ^{s} \norm{ u_{N_1}}_{L_t^\infty L_x^2 } \norm{ v_{N_2}
 }_{L_t^\infty L_x^2 }\right)^2
 \\
 &\lesssim \norm{ u }^2_{\bar X^{-\frac34 } } \norm{ v }^2_{\bar X^{-\frac34 }} .
\end{align*}

(ii). $\mathcal I_{32}$: 
By Lemma \ref{lm-keybiest}\eqref{keybiest2} and \eqref{dyadicsum} we have 
\begin{align*}
\mathcal I_{32}&\lesssim  \sum_{N_3 \gg 1
} \left(  \sum_{1= N_1\ll N_2 \sim N_3
}  N_3 ^{s}  \norm {  u_{N_1}  }_{L^2_xL_t^\infty} 
\norm {  v_{N_2} }_{X} \right)^2
 \\
 &\lesssim  \norm{ u }^2_{\bar X^{-\frac34 } }  \sum_{N_3 \gg 1
}N_3 ^{2s} \left( \sum_{N_2 \sim N_3
}    
\norm {  v_{N_2} }_{X} \right)^2
\\
 &\lesssim \norm{ u }^2_{\bar X^{-\frac34 } } \norm{ v }^2_{\bar X^{s }} .
\end{align*}

(iii). $\mathcal I_{33}$: 
By Lemma \ref{lm-keybiest}\eqref{keybiest1} with $C(N)$ as in the first line of \eqref{CN-biest} and \eqref{dyadicsum} we have 
\begin{align*}
\mathcal I_{33}&\lesssim  \sum_{N_3 \gg 1
} \left(  \sum_{1< N_1\ll N_2 \sim N_3
}   N_3 ^{s} N_1 ^{-1}N_2^{-\frac12+}   \norm{ u_{N_1} }_{X } \norm{ v_{N_2} }_{X }  \right)^2
 \\
 &\lesssim  \norm{ u }^2_{\bar X^{-\frac34 } }  \sum_{N_3 \gg 1
}\left(  \sum_{N_2 \sim N_3
}    N_3 ^{s} N_2^{-\frac12+} 
\norm {  v_{N_2} }_{X} \right)^2
\\
 &\lesssim \norm{ u }^2_{\bar X^{-\frac34 } } \norm{ v }^2_{\bar X^{s }} ,
\end{align*}
where to obtain the second inequality we used Cauchy-Schwarz in $N_1$.

(iv). $\mathcal I_{34}$: 
By Lemma \ref{lm-keybiest}\eqref{keybiest1} with $C(N)$ as in the second line of \eqref{CN-biest} and \eqref{dyadicsum} we have 
\begin{align*}
\mathcal I_{34}&\lesssim  \sum_{N_3 \gg 1
} \left(  \sum_{ N_1\sim N_2 \sim N_3 \gg 1
}  N_3 ^{s}  N_1 ^{-\frac34} \norm{ u_{N_1} }_{X } \norm{ v_{N_2} }_{X }  \right)^2
 \\
 &\lesssim  \norm{ u }^2_{\bar X^{-\frac34 } }  \sum_{N_3 \gg 1
} \left( \sum_{N_2 \sim N_3
}    N_3 ^{s}
\norm {  v_{N_2} }_{X} \right)^2
\\
 &\lesssim \norm{ u }^2_{\bar X^{-\frac34 } } \norm{ v }^2_{\bar X^{s }} ,
\end{align*}
where to obtain the second inequality we used Cauchy-Schwarz in $N_1\sim N_2$.

(v).  $\mathcal I_{35}$: 
By Lemma \ref{lm-keybiest}\eqref{keybiest1} with $C(N)$ as in the fourth line of \eqref{CN-biest} and \eqref{dyadicsum} we have 
\begin{align*}
\mathcal I_{35}&\lesssim  \sum_{N_3 > 1
} \left(  \sum_{ N_1\sim N_2 \gg N_3 
}  N_3 ^{s} \max\left( N_1^{-\frac32} , N_1^{-2+}  N_3^{\frac12} \right) \norm{ u_{N_1} }_{X } \norm{ v_{N_2} }_{X }  \right)^2
 \\
 &\lesssim  \norm{ u }^2_{\bar X^{-\frac34 } }   \norm{ v }^2_{\bar X^{s }} \sum_{N_3 > 1
}N_3 ^{2s}  N_3 ^{-2s -\frac34+} 
\\
 &\lesssim \norm{ u }^2_{\bar X^{-\frac34 } } \norm{ v }^2_{\bar X^{s }} ,
\end{align*}
where to obtain the second inequality we used
the fact that $$\\max\left( N_1^{-\frac32} , N_1^{-2+}  N_3^{\frac12} \right) \lesssim  N_3^{-s-\frac34+} N_1^{-\frac34}  N_2^{s} $$ 
and Cauchy-Schwarz in $N_1\sim N_2$.

 \end{proof}



\section{Estimates in  Gevrey type spaces and well-posedness of \eqref{kdv}}


The Bourgain-Gevrey type space, denoted $\bar X^{\sigma, s}$,  is defined with respect to the norm
\begin{align*}
\| u \|_{\bar X^{\sigma, s}} &= \left\| e^{\sigma | D_x |}u \right\|_{\bar X^{s}} .
\end{align*}
When $\sigma=0$ the spaces $\bar X^{\sigma, s}$ coincides with $\bar  X^{s}$. 
The restrictions of $\bar X^{\sigma, s}$ to a time slab $I\times \mathbb R$ is defined in a similar way as before.
 
 \subsection{Linear estimates in Gevery space}
 By substitution $u\rightarrow e^{\sigma | D _x|}u$ and $f\rightarrow e^{\sigma | D _x|}f$ in Lemma \ref{lm-embed}\eqref{emb3}, \eqref{emb4} and Lemma \ref{lm-linearest}\eqref{linearest1}, respectively, we easily get the following.
\begin{lemma}\label{lm-embedgvry}
Let $\sigma \ge 0$, $s \in \mathbb{R}$. Then
\begin{enumerate} [(i)]
\item \label{embedgvry1}   we have $\bar X^{\sigma,s} \subset C(\mathbb R,G^{\sigma,s})$ and
\begin{equation*}
\sup_{t \in \mathbb{R}} \| u(t) \|_{G^{\sigma, s}} \leq C \| u \|_{\bar X^{\sigma, s}}.
\end{equation*}
for some absolute constant $C > 0$.
\item \label{embgvry2} 
for all $s_1\le s_2$ we have $\bar X^{\sigma, s_2} \subset \bar X^{\sigma, s_1}$.

\item \label{embgvry3} for all $f\in G^{\sigma, s}$ there exists a constant $C>0$ such that
\begin{equation*}
\norm{\chi(t) S(t) f}_{\bar X^{\sigma, s}} \le C \norm{ f}_{G^{\sigma, s}}.
\end{equation*}
\end{enumerate}
\end{lemma}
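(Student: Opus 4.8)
The plan is to derive all three parts from their $\sigma=0$ counterparts (Lemmas \ref{lm-embed} and \ref{lm-linearest}) by conjugating with the spatial Fourier multiplier $e^{\sigma|D_x|}$. The point I would use throughout is that $e^{\sigma|D_x|}$ acts only in the $\xi$-variable, hence commutes with $\langle D_x\rangle^s$, with the projections $P_N$ and $Q_L$, with the free propagator $S(t)$, and with multiplication by $\chi(t)$. By the very definition of the norms this makes $u\mapsto e^{\sigma|D_x|}u$ an isometry from $\bar X^{\sigma,s}$ onto $\bar X^s$, and --- since $\langle D_x\rangle^s$ and $e^{\sigma|D_x|}$ commute --- it gives $\|e^{\sigma|D_x|}f\|_{H^s}=\|f\|_{G^{\sigma,s}}$ for $f\in G^{\sigma,s}$, as well as $\|u(t)\|_{G^{\sigma,s}}=\|e^{\sigma|D_x|}u(t)\|_{H^s}$ for each fixed $t$.

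For \eqref{embedgvry1} I would, given $u\in\bar X^{\sigma,s}$, put $v:=e^{\sigma|D_x|}u\in\bar X^s$ (with $\|v\|_{\bar X^s}=\|u\|_{\bar X^{\sigma,s}}$) and apply Lemma \ref{lm-embed}\eqref{emb3}: this yields $v\in C(\mathbb R,H^s)$ and $\sup_t\|v(t)\|_{H^s}\lesssim\|v\|_{\bar X^s}$, and the norm identity above turns this into $\sup_t\|u(t)\|_{G^{\sigma,s}}\le C\|u\|_{\bar X^{\sigma,s}}$. For the continuity of $t\mapsto u(t)$ in $G^{\sigma,s}$ I would use that $e^{-\sigma|D_x|}$ is an isometric isomorphism of $H^s$ onto $G^{\sigma,s}$ (same norm computation) and that $u(t)=e^{-\sigma|D_x|}v(t)$, so continuity in $H^s$ transfers. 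Part \eqref{embgvry2} is then immediate from Lemma \ref{lm-embed}\eqref{emb4} applied to $v=e^{\sigma|D_x|}u$, since $\|u\|_{\bar X^{\sigma,s_1}}=\|v\|_{\bar X^{s_1}}\lesssim\|v\|_{\bar X^{s_2}}=\|u\|_{\bar X^{\sigma,s_2}}$.

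For \eqref{embgvry3} I would use that $e^{\sigma|D_x|}$ commutes with $\chi(t)S(t)$, so $e^{\sigma|D_x|}[\chi(t)S(t)f]=\chi(t)S(t)[e^{\sigma|D_x|}f]$; applying Lemma \ref{lm-linearest}\eqref{linearest1} with datum $e^{\sigma|D_x|}f$ then gives
\[
\|\chi(t)S(t)f\|_{\bar X^{\sigma,s}}=\bigl\|\chi(t)S(t)\bigl(e^{\sigma|D_x|}f\bigr)\bigr\|_{\bar X^s}\le C\|e^{\sigma|D_x|}f\|_{H^s}=C\|f\|_{G^{\sigma,s}}.
\]

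I do not expect any genuine difficulty here: the lemma is a formal consequence of the $\sigma=0$ theory together with the fact that $e^{\sigma|D_x|}$ intertwines the pairs of spaces in question. The only step deserving a word of care is the continuity claim in \eqref{embedgvry1}, which is handled by the isometry $e^{-\sigma|D_x|}\colon H^s\to G^{\sigma,s}$ as indicated above; once that is noted, the rest is a direct translation of the $\sigma=0$ estimates.
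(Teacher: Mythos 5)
Your proposal is correct and follows essentially the same route as the paper, which proves the lemma precisely by the substitutions $u\rightarrow e^{\sigma|D_x|}u$ and $f\rightarrow e^{\sigma|D_x|}f$ in Lemma \ref{lm-embed}\eqref{emb3}, \eqref{emb4} and Lemma \ref{lm-linearest}\eqref{linearest1}. Your write-up simply makes explicit the commutation of $e^{\sigma|D_x|}$ with $\langle D_x\rangle^s$, $P_N$, $Q_L$, $S(t)$ and $\chi(t)$ and the resulting norm identities, which the paper leaves implicit.
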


 \subsection{Bilinear estimates in Gevrey space}

From Lemma \ref{lm-bilinearest} and a simple triangle inequality we obtain the following.
\begin{corollary}\label{cor-bilinearest}
Let $\mathcal B (u,v)$ be the bilinear form  in Lemma \ref{lm-bilinearest}. Then for all $u, v \in \bar X^{\sigma, s} $, where  $\sigma\ge 0$ and $s\in[-3/4, 0]$, we have
\begin{equation}\label{bilinearest-xsigma}
\norm{ \mathcal B (u, v)}_{\bar X^{\sigma, s}} 
\lesssim \left(\norm{ u}_{\bar X^{\sigma, s }} \norm{ v}_{\bar X^{\sigma,-\frac34}} + \norm{ u}_{\bar X^{\sigma, -\frac34} }  \norm{ v}_{\bar X^{\sigma, s}}  \right)
\end{equation}
\end{corollary}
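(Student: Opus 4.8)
The plan is to reduce the Gevrey-space estimate to the already-established Sobolev-type estimate in Lemma \ref{lm-bilinearest} by conjugating with the Fourier multiplier $e^{\sigma|D_x|}$ and then controlling the resulting exponential weight on the nonlinearity by the product of the weights on the two factors. Concretely, by definition of the $\bar X^{\sigma,s}$-norm we have $\norm{\mathcal B(u,v)}_{\bar X^{\sigma,s}} = \norm{e^{\sigma|D_x|}\mathcal B(u,v)}_{\bar X^s}$, so the first step is to express $e^{\sigma|D_x|}\mathcal B(u,v)$ in terms of $e^{\sigma|D_x|}u$ and $e^{\sigma|D_x|}v$. Since $S(t)=e^{-t\partial_x^3}$ and the cutoffs $\chi(t/4)$, $\chi(t)$ act only in time, the multiplier $e^{\sigma|D_x|}$ commutes with everything in the definition of $\mathcal B$ except the product $\chi u\cdot \chi v$; thus $e^{\sigma|D_x|}\mathcal B(u,v) = \mathcal B_\sigma(u,v)$ where the only change is that the bilinear term $\partial_x((\chi u)(\chi v))$ is replaced by $\partial_x\, e^{\sigma|D_x|}\!\big((\chi u)(\chi v)\big)$.

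The heart of the argument is then the pointwise-in-Fourier inequality
\[
e^{\sigma|\xi|} \le e^{\sigma|\xi_1|}\, e^{\sigma|\xi_2|} \qquad \text{whenever } \xi=\xi_1+\xi_2,
\]
which is immediate from the triangle inequality $|\xi|\le|\xi_1|+|\xi_2|$ and $\sigma\ge 0$. Writing the product $(\chi u)(\chi v)$ on the Fourier side as a convolution, this inequality shows that $e^{\sigma|D_x|}\big((\chi u)(\chi v)\big)$ has Fourier transform dominated pointwise (in absolute value) by the convolution of $|\widehat{e^{\sigma|D_x|}\chi u}|$ with $|\widehat{e^{\sigma|D_x|}\chi v}|$. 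Since all the norms appearing in the proof of Lemma \ref{lm-bilinearest} — the $\bar X^s$, $X$, $L^2_xL^\infty_t$, $L^\infty_tL^2_x$ norms and the Strichartz/smoothing norms of Lemma \ref{lm-strz} — depend only on the absolute value of the space(-time) Fourier transform, one may rerun the entire proof of Lemma \ref{lm-bilinearest} with $u,v$ replaced by $e^{\sigma|D_x|}u$, $e^{\sigma|D_x|}v$ and with the convolution estimate inserted at the step where the product is formed. This yields
\[
\norm{e^{\sigma|D_x|}\mathcal B(u,v)}_{\bar X^s}
\lesssim \norm{e^{\sigma|D_x|}u}_{\bar X^s}\norm{e^{\sigma|D_x|}v}_{\bar X^{-3/4}}
+ \norm{e^{\sigma|D_x|}u}_{\bar X^{-3/4}}\norm{e^{\sigma|D_x|}v}_{\bar X^s},
\]
which is exactly \eqref{bilinearest-xsigma} after undoing the definition of the $\bar X^{\sigma,\cdot}$-norms. (The phrase "and a simple triangle inequality" in the statement refers precisely to the Fourier-space inequality above; the word "simple" is accurate.)

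I expect no genuine obstacle here — the only thing to be careful about is making sure that every estimate invoked in the proof of Lemma \ref{lm-bilinearest} is indeed insensitive to replacing a function by one whose Fourier transform is pointwise larger in modulus, i.e. that one never exploits cancellation in the Fourier variables. Inspecting that proof, all of the bilinear dyadic estimates of Corollary \ref{cor-dydbiest} are stated with absolute values on both sides, the linear estimates of Lemma \ref{lm-linearest} and the embeddings of Lemma \ref{lm-embed} are likewise monotone in $|\widetilde u|$, and the dyadic frequency decompositions and Cauchy--Schwarz steps are unaffected; hence the substitution is legitimate and the corollary follows. One could also phrase this more slickly by noting that $\mathcal B_\sigma$ is a bilinear Fourier multiplier operator whose symbol is dominated by a product symbol, but carrying the bound through the existing proof verbatim is the cleanest route and is what the one-line statement "From Lemma \ref{lm-bilinearest} and a simple triangle inequality" is pointing at.
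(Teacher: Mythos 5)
Your proposal is correct and is essentially the paper's own argument: the paper likewise writes $\norm{\mathcal B(u,v)}_{\bar X^{\sigma,s}}=\norm{e^{\sigma|D_x|}\mathcal B(u,v)}_{\bar X^{s}}$, splits into the low- and high-frequency output pieces, uses the triangle inequality $e^{\sigma N_3}\lesssim e^{\sigma N_1}e^{\sigma N_2}$ to pass to $U=e^{\sigma|D_x|}u$, $V=e^{\sigma|D_x|}v$, and then reuses the estimates \eqref{I1} and \eqref{I2} from the proof of Lemma \ref{lm-bilinearest}. The one imprecision is your claim that the mixed norms $L^2_xL^\infty_t$ and $L^\infty_tL^2_x$ are functions of $|\widetilde u|$ alone (they are not), but this is harmless here, since on the right-hand sides these norms enter only through the $\bar X^{\sigma,\cdot}$-norms of $U,V$, while the low-frequency output piece is handled, as in the paper, by bounding the weight $e^{\sigma|\xi|}\le e^{2\sigma}\le e^{\sigma N_1}e^{\sigma N_2}$ on the support of $\beta_1$ rather than by Fourier-domination inside the $L^2_xL^\infty_t$-norm.
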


\begin{proof}
By definition of the $\bar X^{\sigma, s}$-norm we have
\begin{align*}
\norm{    \mathcal B(u, v) }^2_{ \bar X^{\sigma, s} } &=
 \norm{   e^{\sigma |D_x|} \mathcal B(u, v) }^2_{ \bar X^{s} }\lesssim  \mathcal L_1 + \mathcal L_2,
\end{align*}
where
\begin{align*}
\mathcal L_1&= e^{\sigma} \norm{  P_1   \mathcal B(u, v)}^2_{L_x^2 L^\infty_t},
\\
\mathcal L_2&=\sum_{N_3 >
1}N_3 ^{2s} e^{2\sigma N_3}\norm{ P_{N_3}    \mathcal B(u, v)}^2_{X} .
\end{align*}
But using the estimate for $\mathcal I_1$ in \eqref{I1} we obtain
\begin{align*}
\mathcal L_1&\lesssim e^{2\sigma}  \norm{ u}^2_{\bar X^{-\frac34}} \norm{ v}^2_{\bar X^{-\frac34}} 
\lesssim   \norm{ u}^2_{\bar X^{\sigma, -\frac34}} \norm{ v}^2_{\bar X^{\sigma, -\frac34}} .
\end{align*}

Decomposing $u$ and $v$ we have
\begin{align*}
\mathcal L_2&\lesssim \sum_{N_3 >
1} \left(  \sum_{N_1,N_2 \ge 1
} N_3 ^{s} e^{\sigma N_3} \norm{P_{N_3}  \mathcal B(u_{N_1},v_{N_2})}_{X} \right)^2 .
\end{align*}
Let $U=e^{\sigma |D_x|}  u$ and  $V=e^{\sigma |D_x|}  v$.
Since $N_3\lesssim N_1+ N_2$, by the triangle inequality,  it follows that
$$
e^{\sigma N_3} \lesssim e^{\sigma N_1} e^{\sigma N_2} 
$$
 which can be combined with the estimate for $\mathcal J_2$ in \eqref{I2}  above to obtain
\begin{align*}
\mathcal L_2&\lesssim \sum_{N_3 >
1}  \left(  \sum_{N_1,N_2 \ge 1
} N_3 ^{s} \norm{ P_{N_3}  \mathcal B\left( U_{N_1} , V_{N_2}  \right)}_{X} \right)^2
\\
&\lesssim \norm{ U}^2_{\bar X^{ s }} \norm{ V}^2_{\bar X^{-\frac34}} + \norm{ U}^2_{\bar X^{-\frac34} }  \norm{ V}^2_{\bar X^{ s}}  
\\
&= \norm{ u}^2_{\bar X^{\sigma, s }} \norm{ v}^2_{\bar X^{\sigma,-\frac34}} + \norm{ u}^2_{\bar X^{\sigma, -\frac34} }  \norm{ v}^2_{\bar X^{\sigma, s}}  .
\end{align*}

\end{proof}

\subsection{Local well-posedness in Gevery class}
Define the map
$$
\Phi(u)(t)=\chi(t/4) W(t) f + \mathcal B(u, u)(t).
$$
Let $s\in [-3/4, 0]$ and $J=[-1,1]$. By Lemma \ref{lm-embedgvry}\eqref{embgvry2}, \eqref{embgvry3}  and Corollary \ref{cor-bilinearest} we have
\begin{align*}
\norm{ \Phi(u) }_{\bar X^{\sigma, s}_J} \le C\left(  \norm{ f}_{G^{\sigma, s}}+  \norm{ u}^2_{ \bar X^{\sigma, s}_J}\right).
\end{align*}
A similar estimate can be derived for the difference $\Phi(u) -\Phi(v) $, where $v$ is also a solution. Then by a standard fixed point argument \eqref{kdv} admits a unique solution 
 $$ u \in \bar X^{\sigma, s}_J \subset  C\left( J; G^{\sigma, s}(\R)\right)$$
provided the data norm $ \norm{ f}_{G^{\sigma, s}}$ is sufficiently small. Moreover,  the data to solution map $f\ \mapsto u $ is Lipschitz continuous from $\left\{f \in G^{\sigma, s}:  \norm{ f}_{G^{\sigma, s}}\le \epsilon \right\}$ to $ C\left( I; G^{\sigma, s}(\R)\right)$. Moreover, the solution $u$ satisfies the bound 
$\| u\|_{\bar X^{\sigma,  s}_J }\lesssim \| f\|_{ G^{\sigma, s}}.
$

Finally, a local solution for \eqref{kdv} with arbitrarily large $\norm{ f}_{G^{\sigma, s}}$ can be constructed using the scaling symmetry of KdV. Indeed, observe that if
$u$ solves \eqref{kdv} so does 
\begin{equation}\label{scaling}
u_\lambda(t,x) = \lambda^2 u\left(\lambda^3 t, \lambda x\right) 
\end{equation}
with initial data 
$f_\lambda(x) = \lambda^2 f\left( \lambda x\right)$ for some $\lambda>0$. Now given $f$ with arbitrarily large $\norm{ f}_{G^{\sigma, s}}$ one can choose $\lambda$ to be arbitrarily small ($0<\lambda\ll 1$) that
\begin{equation}\label{scalednorm}
\begin{split}
 \norm{f_\lambda}_{G^{\sigma, s}} &= \lambda^\frac32 \left(\int_\R e^{\lambda \sigma|\xi| } \angles{\lambda \xi}^{2s} |\widehat f(\xi)|^2 \, d\xi \right)^\frac12
 \\
 &\lesssim
  \lambda^{\frac32+s} \norm{ f}_{G^{\sigma, s}} \ll 1.
\end{split} 
\end{equation}

By the above argument on local existence theory there exists a solution 
$ u_\lambda \in  C\left(J; G^{\sigma, s} (\R)\right) $ to \eqref{kdv} with initial data $u_\lambda(0)=f_\lambda$. 
By the scaling \eqref{scaling}
 $ u  $ solves \eqref{kdv} on $I\times \R$, where $I=\lambda^3 J=[-\lambda^3, \lambda^3]$. In view of \eqref{scalednorm} the time of existence is given by
$$
t_0:=\lambda^3= c \left[ \norm{ f}_{G^{\sigma, s} }\right]^{-\frac{6}{3+2s}},
$$ 
for some $0<c\ll 1$. 

In conclusion, we have the following local well-posedness result in Gevrey class.
\begin{theorem}[Local well-posedness]\label{thm-lwp} 
Let $\sigma > 0$ and $s\in [-3/4, 0]$. Then for any $f  \in G^{\sigma,s}$ there exists a time 
$$t_0= C_0( \norm{ f}_{G^{\sigma, s} })>0$$
and a unique solution $u$ of \eqref{kdv} on the time interval $I=[-t_0, t_0]$ such that
\[
u \in C(I;  G^{\sigma,s}).
\]
Moreover, the solution depends continuously on the data $f$, and satisfies the bound 
\begin{equation} \label{solnbound1}
\| u\|_{\bar X^{ \sigma, s}_I}\le C \| f\|_{G^{\sigma, s}},
\end{equation}  
where $C$ depends only on $s$. In particular, Lemma \ref{lm-embedgvry}\eqref{embedgvry1} and \eqref{solnbound1} gives the bound
\begin{equation}\label{solnbound2}
 \sup_{t\in I} \| u(t) \|_{G^{\sigma , s}} \le C\| f\|_{G^{\sigma, s}} .
\end{equation} 
\end{theorem}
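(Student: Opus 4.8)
The plan is to prove Theorem \ref{thm-lwp} by a contraction mapping argument for the map $\Phi(u)(t)=\chi(t/4)W(t)f+\mathcal B(u,u)(t)$ in the restricted Bourgain--Gevrey space $\bar X^{\sigma,s}_J$ over the fixed interval $J=[-1,1]$, and then to remove the smallness hypothesis on the data by exploiting the scaling symmetry \eqref{scaling}.

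First I would treat small data. Fix $\sigma\ge 0$ and $s\in[-3/4,0]$. For the linear part, Lemma \ref{lm-embedgvry}\eqref{embgvry3} gives $\norm{\chi(t/4)W(t)f}_{\bar X^{\sigma,s}_J}\lesssim \norm{f}_{G^{\sigma,s}}$, the rescaled cutoff being harmless in view of \eqref{X-timeloc} and its Gevrey analogue. For the bilinear part, Corollary \ref{cor-bilinearest} combined with the embedding $\bar X^{\sigma,s}\subset\bar X^{\sigma,-3/4}$ of Lemma \ref{lm-embedgvry}\eqref{embgvry2} --- which is exactly where the restriction $s\ge-3/4$ enters --- yields $\norm{\mathcal B(u,u)}_{\bar X^{\sigma,s}_J}\lesssim\norm{u}^2_{\bar X^{\sigma,s}_J}$. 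Hence $\norm{\Phi(u)}_{\bar X^{\sigma,s}_J}\le C\bigl(\norm{f}_{G^{\sigma,s}}+\norm{u}^2_{\bar X^{\sigma,s}_J}\bigr)$, and by bilinearity, writing $\mathcal B(u,u)-\mathcal B(v,v)=\mathcal B(u-v,u)+\mathcal B(v,u-v)$, one gets the difference bound $\norm{\Phi(u)-\Phi(v)}_{\bar X^{\sigma,s}_J}\le C\bigl(\norm{u}_{\bar X^{\sigma,s}_J}+\norm{v}_{\bar X^{\sigma,s}_J}\bigr)\norm{u-v}_{\bar X^{\sigma,s}_J}$. Choosing the ball radius $R\sim\norm{f}_{G^{\sigma,s}}$ and requiring $\norm{f}_{G^{\sigma,s}}$ small enough that $CR\le 1/4$, the map $\Phi$ is a contraction of the ball $\{\norm{u}_{\bar X^{\sigma,s}_J}\le R\}$ into itself; the Banach fixed point theorem then produces a unique solution $u\in\bar X^{\sigma,s}_J$ of \eqref{kdv} on $J$, with $\norm{u}_{\bar X^{\sigma,s}_J}\le C\norm{f}_{G^{\sigma,s}}$, Lipschitz dependence on $f$ following from the same estimates, and $u\in C(J;G^{\sigma,s})$ together with the pointwise-in-time bound from Lemma \ref{lm-embedgvry}\eqref{embedgvry1}.

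For arbitrary data I would invoke the scaling \eqref{scaling}: setting $f_\lambda(x)=\lambda^2 f(\lambda x)$ for $0<\lambda<1$, a Fourier computation as in \eqref{scalednorm} gives $\norm{f_\lambda}_{G^{\sigma/\lambda,s}}\lesssim\lambda^{3/2+s}\norm{f}_{G^{\sigma,s}}$, and since $s\ge-3/4$ the exponent $3/2+s$ is strictly positive, so this is $\le\epsilon$ once $\lambda$ is small. Applying the small-data result to $f_\lambda$ on $J$ yields $u_\lambda\in C(J;G^{\sigma/\lambda,s})$ with $\norm{u_\lambda}_{\bar X^{\sigma/\lambda,s}_J}\lesssim\norm{f_\lambda}_{G^{\sigma/\lambda,s}}$, and then $u(t,x)=\lambda^{-2}u_\lambda(\lambda^{-3}t,\lambda^{-1}x)$ solves \eqref{kdv} on $I=[-\lambda^3,\lambda^3]$; undoing the scaling on the Fourier side shows $u(t,\cdot)\in G^{\sigma,s}$ for $t\in I$, so the full radius $\sigma$ is recovered. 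Choosing $\lambda$ so that $\norm{f_\lambda}_{G^{\sigma/\lambda,s}}\sim\epsilon$ fixes the lifespan $t_0=\lambda^3\sim\norm{f}_{G^{\sigma,s}}^{-6/(3+2s)}$, and transporting the $\bar X$- and $L^\infty_tG^{\sigma,s}$-bounds through the scaling gives \eqref{solnbound1} and \eqref{solnbound2}.

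The bulk of the analytic work is already encapsulated in Corollary \ref{cor-bilinearest} (and through it in Guo's endpoint bilinear estimate \eqref{biest2}), so the remaining difficulty is organizational rather than estimational: one must verify that the time-localization estimate \eqref{X-timeloc} tolerates the several cutoffs $\chi(t)$, $\chi(t/4)$ built into $\Phi$ and into $\mathcal B$, that the infimum-norm $\norm{\cdot}_{\bar X^{\sigma,s}_J}$ behaves well along the iteration, and --- the one genuinely delicate point --- that the scaling step be carried out in $G^{\sigma/\lambda,s}$ rather than in $G^{\sigma,s}$, so that the solution finally obtained is analytic on the whole strip $S_\sigma$ and not merely on $S_{\lambda\sigma}$.
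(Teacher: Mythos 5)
Your proposal is correct and follows essentially the same route as the paper: a contraction for $\Phi(u)=\chi(t/4)W(t)f+\mathcal B(u,u)$ in $\bar X^{\sigma,s}_J$, $J=[-1,1]$, based on Lemma \ref{lm-embedgvry} and Corollary \ref{cor-bilinearest}, followed by the scaling \eqref{scaling} to remove the smallness assumption and read off $t_0\sim \norm{f}_{G^{\sigma,s}}^{-6/(3+2s)}$. The single point where you deviate --- measuring $f_\lambda$ in $G^{\sigma/\lambda,s}$ instead of $G^{\sigma,s}$ --- is in fact a welcome sharpening of the paper's computation \eqref{scalednorm}: since $e^{(\sigma/\lambda)|\lambda\xi|}=e^{\sigma|\xi|}$, the same bound $\norm{f_\lambda}_{G^{\sigma/\lambda,s}}\lesssim\lambda^{3/2+s}\norm{f}_{G^{\sigma,s}}$ holds, and running the local theory at radius $\sigma/\lambda$ is precisely what guarantees that the unscaled solution is analytic on the full strip $S_\sigma$ rather than on $S_{\lambda\sigma}$.
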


\begin{remark}\label{rmk-lwp}
Theorem \ref{thm-lwp} shows that if the initial data $f$ is analytic on the strip $S_\sigma$ so is the solution $u(t)$
 on the same strip as long as $t\in I$. Note also that in view of the embedding \eqref{Gembedding} we can allow all $s\in \R $ in Theorem \ref{thm-lwp} but then 
 the solution will be analytic only on a slightly smaller strip $S_{\sigma-}$.
 \end{remark}


\section{Almost conservation law and lower bound for $\sigma$}
\subsection{Almost conservation law}
For a given $u(0)=f\in G^{\sigma} $ we have by the above local existence theory a solution $u(t)\in  G^{\sigma}$ for $0\le t\le t_0$, where (setting $s=0$ in Theorem \ref{thm-lwp})
\begin{equation}\label{t0}
t_0=C_0(  \norm{ f}_{G^{\sigma} })>0
\end{equation}
The solution $u$
satisfies the bound
\begin{equation}\label{C}
\sup_{t\in [0,  t_0]} \| u(t) \|_{G^{\sigma}}\le C\| u(0) \|_{G^{\sigma}},
\end{equation}
where the constant $C$ in \eqref{C} comes from \eqref{solnbound2} and is independent of $t_0$ and $\sigma$. 
The question is then whether we can improve on estimate \eqref{C}. In what follows
we will use equation \eqref{kdv} and the local existence theory mentioned above to obtain the approximate conservation law
\begin{equation*}
 \sup_{t\in [0,  t_0]} \| u(t) \|^2_{G^{\sigma}} =\| u(0) \|^2_{G^{\sigma}} +\mathcal  E_\sigma(0) ,
\end{equation*} 
 where $\mathcal E_\sigma(0)$ satisfies the bound 
 $\mathcal E_\sigma(0)\le C \sigma^{\frac34}  \| u(0) \|^3_{G^{\sigma}}$. The quantity $\mathcal E_\sigma(0)$ can be considered an error term since in the limit as $\sigma\rightarrow 0 $, we have 
 $\mathcal E_\sigma(0) \rightarrow 0$, and hence recovering the well-known conservation of $L^2$-norm of solution: 
 $$ \| u(t) \|_{L^2_x}=  \| u(0) \|_{L^2_x} \quad \text{for all} \ \  t\in  [0,  t_0].$$

\begin{theorem}\label{thm-approx}
Let $I=[0,t_0]$, where $t_0$ is as in \eqref{t0}. Then there exists $C > 0$ such that for any $\sigma > 0$ and any solution
 $u \in  \bar X^{\sigma, 0}_I$ to the Cauchy problem \eqref{kdv} on the time interval  $I$, we have the estimate
\begin{equation}\label{approx1}
\sup_{t\in [0,t_0]} \| u(t) \|^2_{G^{\sigma}} \leq \| u(0) \|^2_{G^{\sigma}} + C \sigma^{\frac 34} \| u \|^3_{\bar X^{\sigma, 0}_{I}}.
\end{equation} 
Moreover, we have
\begin{equation}\label{approx2}
 \sup_{t\in [0,t_0]} \| u(t) \|^2_{G^{\sigma}} \leq \| u(0) \|^2_{G^{\sigma}} + C \sigma^{\frac34} \| u(0) \|^3_{G^{\sigma}},
\end{equation} 

\end{theorem}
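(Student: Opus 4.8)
The plan is to derive an energy identity for the quantity $\|e^{\sigma|D_x|}u(t)\|_{L^2_x}^2$ by differentiating in time and using the KdV equation, then bound the resulting error term using the bilinear estimates of Section 3 lifted to the Gevrey–Bourgain space $\bar X^{\sigma,0}_I$. Writing $v = e^{\sigma|D_x|}u$, one computes that $v$ satisfies $v_t + v_{xxx} + e^{\sigma|D_x|}(uu_x) = 0$, so that
\begin{align*}
\frac{d}{dt}\|v(t)\|_{L^2_x}^2 = -2\Innerprod{v}{e^{\sigma|D_x|}(uu_x)}_{L^2_x} = -2\Innerprod{v}{e^{\sigma|D_x|}(uu_x)}_{L^2_x}.
\end{align*}
Since $\Innerprod{v}{v v_x}_{L^2_x} = 0$ (the $L^2$ conservation for the free-symbol part), the error is the commutator
\begin{align*}
\mathcal E_\sigma(t) - \mathcal E_\sigma(0) = -2\int_0^t \Innerprod{v}{\,e^{\sigma|D_x|}(uu_x) - v v_x\,}_{L^2_x}\,dt'.
\end{align*}
Here $e^{\sigma|D_x|}(uu_x) - v v_x$ carries a gain because the symbol defect $e^{\sigma(|\xi_1+\xi_2|)} - e^{\sigma|\xi_1|}e^{\sigma|\xi_2|}$ vanishes when $\xi_1,\xi_2$ have the same sign, and in general is controlled using $e^{\sigma|\xi_1+\xi_2|} \le e^{\sigma|\xi_1|}e^{\sigma|\xi_2|}$ with room to spare in the opposite-sign region; this is where the exponent $\sigma^{3/4}$ enters, mirroring the $-3/4$ Sobolev threshold.

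First I would integrate the error term by parts once to symmetrize, writing the trilinear form as $\int_0^t \int_\R \xi_3\, \widehat v(\xi_1)\widehat v(\xi_2)\widehat v(\xi_3)\,[\,\text{symbol}\,]$ over $\xi_1+\xi_2+\xi_3 = 0$, and then estimate it by the duality pairing in $\bar X^{\sigma,0}_I \times \bar X^{\sigma,0}_I$ against the Duhamel/bilinear machinery. Concretely, I would reduce to bounding $\|\Lambda^{-1}P_{N_3}\partial_x(U_{N_1}V_{N_2})\|_X$-type quantities as in Lemma \ref{lm-keybiest} and Lemma \ref{lm-lowfreqbiest}, but now with the extra multiplier $\mu(\xi_1,\xi_2,\xi_3)$ coming from the exponential defect; the key pointwise bound is $\mu \lesssim \sigma^{3/4}(\min(N_1,N_2,N_3))^{3/4}e^{\sigma N_1}e^{\sigma N_2}e^{\sigma N_3}$ (or a comparable power), obtained from $|e^{\sigma a}-1| \lesssim (\sigma a)^{3/4}e^{\sigma a}$ applied to the small frequency among the three. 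Summing the dyadic pieces against $C(N)$ from \eqref{CN-biest} — exactly as in the estimates for $\mathcal I_1$ and $\mathcal I_3$ in the proof of Lemma \ref{lm-bilinearest} — the extra factor $\sigma^{3/4}N_{\mathrm{min}}^{3/4}$ is precisely what is needed to close the sum uniformly, yielding \eqref{approx1} with the cube $\|u\|_{\bar X^{\sigma,0}_I}^3$.

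The passage from \eqref{approx1} to \eqref{approx2} is then immediate: by Theorem \ref{thm-lwp} with $s=0$ applied on $I=[0,t_0]$ we have the a priori bound $\|u\|_{\bar X^{\sigma,0}_I}\le C\|u(0)\|_{G^\sigma}$, so substituting into \eqref{approx1} replaces $\|u\|_{\bar X^{\sigma,0}_I}^3$ by $C^3\|u(0)\|_{G^\sigma}^3$ and absorbs the constant. The main obstacle I anticipate is not the algebra of the dyadic sums — those are a direct rerun of Section 3 — but rather justifying the energy identity rigorously at the low regularity $s=0$ in a space where $u$ is only known to lie in $\bar X^{\sigma,0}_I$: one must argue by approximation (smooth truncation of the data, or mollification in $x$), verify the identity for smooth solutions, and pass to the limit using the continuity of the trilinear form on $\bar X^{\sigma,0}_I$, together with the fact that $\bar X^{\sigma,0}_I\hookrightarrow C(I;G^\sigma)$ from Lemma \ref{lm-embedgvry}\eqref{embedgvry1}. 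A secondary technical point is handling the low-frequency piece $N=1$ separately (it sits in $L^2_xL^\infty_t$ rather than $X$), exactly as the term $\mathcal I_1$ is handled separately from $\mathcal I_2$ above, but there $\sigma^{3/4}\sim e^{O(\sigma)}$ times a fixed constant causes no trouble.
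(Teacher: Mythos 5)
Your overall architecture is the paper's: set $w=e^{\sigma|D_x|}u$, rewrite \eqref{kdv} as a KdV equation for $w$ with a commutator error $f(w)$, run the $L^2$ energy identity (the $\int w\,ww_x\,dx$ term vanishes), bound the remaining trilinear term with the dyadic bilinear machinery of Section 3, and deduce \eqref{approx2} from \eqref{approx1} via \eqref{solnbound1} with $s=0$. The genuine problem is your key pointwise multiplier bound. Written relative to $\widehat w$, the symbol defect is $1-e^{-\sigma(|\xi_1|+|\xi_2|-|\xi_1+\xi_2|)}$, and $|\xi_1|+|\xi_2|-|\xi_1+\xi_2|\le 2\min(|\xi_1|,|\xi_2|)$: the gain is in the smaller of the two \emph{input} frequencies, not in $\min(N_1,N_2,N_3)$. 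In the high$\times$high$\to$low interaction (say $\xi_1=N$, $\xi_2=-(N-1)$, output frequency $1$) the defect is $1-e^{-\sigma(2N-2)}$, which tends to $1$ and is certainly not $O(\sigma^{3/4})$; so your claimed bound $\mu\lesssim \sigma^{3/4}\min(N_1,N_2,N_3)^{3/4}e^{\sigma N_1}e^{\sigma N_2}e^{\sigma N_3}$ fails there. Consequently the heuristic that this factor ``is precisely what is needed to close the sum'' is backwards in that regime: the correct weight $\min(N_1,N_2)^{3/4}\sim N_1^{3/4}$ is a large \emph{loss} that must be absorbed by the strong constants in \eqref{CN-biest} (the $N_1^{-3/2}$, resp.\ $N_1^{-2+}N_3^{1/2}$ bounds, or, for output frequency $1$, the $N_1^{-1}$ bound from Proposition \ref{prop-dydbiest}\eqref{cnl3} together with the factor $M\le 1$ from $\partial_x$), while the case that actually pins the exponent $3/4$ (hence the rate $|t|^{-4/3}$) is the balanced interaction $N_1\sim N_2\sim N_3$, where the weight $N_1^{3/4}$ exactly cancels $C(N)\sim N_2^{-3/4}$. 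As written, your proof of the central estimate rests on a false inequality and misidentifies the critical frequency configuration.

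With the corrected weight the plan does close, and this is what the paper does in Lemma \ref{lm-Rest}: after Plancherel and the bound \eqref{fwest}, the high output frequencies are handled by Cauchy--Schwarz in modulation, reducing to $\min(N_1,N_2)^{3/4}\norm{\Lambda^{-1}P_{N_3}\partial_x(w_{N_1}w_{N_2})}_X$ and a rerun of the case analysis behind Lemma \ref{lm-bilinearest}; the low output frequency piece is \emph{not} treated with Lemma \ref{lm-lowfreqbiest}: pairing against $P_1w\in L_t^\infty L_x^2$ and estimating $\mathbbm{1}_I P_1 f(w)$ in $L^2_{t,x}$ via Proposition \ref{prop-dydbiest}\eqref{cnl3} suffices, since at $s=0$ the product $N_1^{3/4}\cdot M\cdot N_1^{-1}$ is summable and no endpoint/logarithmic issue arises — so Guo's refined $L_x^2L_t^\infty$ estimate is not needed here, contrary to your ``exactly as $\mathcal I_1$'' plan. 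Your remaining points (the identity is justified for the smooth solution, and \eqref{approx1} plus \eqref{solnbound1} gives \eqref{approx2}) are fine and match the paper.
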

\begin{proof}
The estimate \eqref{approx2} follows from \eqref{approx1} and \eqref{solnbound1} with $s=0$. Thus, it remains to prove \eqref{approx1}. To this end
set 
$$w=e^{\sigma |D_x|}u,
$$ 
where $u$ is the solution to \eqref{kdv}. Since $u$ is real-valued so is $w$. Now set
\begin{equation}\label{fw}
f(w)=\frac12 \partial_x \left[  w\cdot w  -e^{\sigma |D_x| }\left( e^{-\sigma |D_x| }w \cdot   e^{-\sigma |D_x| }w  \right) \right].
\end{equation}
Then we use \eqref{kdv} to obtain
\begin{equation}\label{kdvm}
w_t + w_{xxx} + w w_x  = f(w).
\end{equation}

Multiplying \eqref{kdvm} by $w$ and integrating in space we obtain
$$
\frac12 \frac{d}{dt} \int_\R w^2 dt  + \int_\R  \partial_x\left( w w_{xx} - \frac12 w_x^2 +\frac13 w^3\right) dx= \int_\R  wf(w)  dx.
$$
We may assume $w, w_x$ and $w_{xx}$ decays to zero as $|x|\rightarrow \infty$. This in turn implies
$$
\frac{d}{dt}\int_\R w^2 dx= 2\int_\R wf(w) dx.
$$
Now integrating in time over the interval $I=[0, t_0]$,  where $t_0\le 1$, we obtain
\begin{align*}
\int_\R w^2(t_0, x) dx= \int_\R  w^2(0,x ) dx+2\int_{\R^{1+1} } \mathbbm{1}_I (t) wf(w) \, dt  dx.
\end{align*}
We conclude that
\begin{equation}
\label{almostcons-est1}
\| u(t_0)\|_{G^\sigma}^2\le\| u(0)\|_{G^\sigma}^2 + \mathcal R,
\end{equation}
where 
\begin{equation*}
\mathcal R=2\left|\int_{\R^{1+1} } \mathbbm{1}_I  (t) w f(w) \, dt dx \right|.
\end{equation*}
Now combining \eqref{almostcons-est1} with the estimate for $\mathcal R$ in \eqref{Rest} below and using  
$$\norm{ w}_{\bar X^0_I}= \norm{ u}_{\bar X^{ \sigma,0}_I}$$ 
we obtain \eqref{approx1}.
\end{proof}

The proof for the following Lemma is given in the next section.
\begin{lemma}\label{lm-Rest} For all $\sigma\ge 0$ and $w\in  {\bar X^0_I}$ we have
\begin{equation}\label{Rest}
\mathcal R \le C   \sigma^\frac34 \norm{ w}_{\bar X^0_I }^3.
\end{equation}
\end{lemma}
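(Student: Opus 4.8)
\textbf{Proof plan for Lemma \ref{lm-Rest}.}

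The plan is to estimate the error term
\[
\mathcal R = 2\left| \int_{\R^{1+1}} \mathbbm{1}_I(t)\, w\, f(w)\, \d t\, \d x\right|
\]
by first performing a dyadic decomposition $w = \sum_{N_1\ge 1} w_{N_1}$ and similarly for the two copies of $w$ hidden inside $f(w)$, so that $\mathcal R$ becomes a sum over triples $(N_1,N_2,N_3)$ of terms of the shape
\[
\left| \int_{\R^{1+1}} \mathbbm{1}_I(t)\, w_{N_1}\, G_{N_2,N_3}\, \d t\, \d x\right|,
\]
where $G_{N_2,N_3}$ is the frequency-$N_1$ part of $\frac12 \partial_x[\,\cdot\,]$ with the two inner inputs localized at $N_2$ and $N_3$. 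The crucial point is that the symbol of $f(w)$ on these frequencies is
\[
\xi\Big(1 - e^{\sigma(|\xi_1+\xi_2| - |\xi_1| - |\xi_2|)}\Big),
\]
and since $|\xi_1+\xi_2|-|\xi_1|-|\xi_2| \le 0$ with $|\xi_1|,|\xi_2|\sim N_{\max}$, one has the pointwise bound
\[
\Big|1 - e^{\sigma(|\xi_1+\xi_2|-|\xi_1|-|\xi_2|)}\Big| \lesssim \min\big(1,\ \sigma N_{\max}\big) \le (\sigma N_{\max})^{\frac34},
\]
interpolating between the trivial bound $1$ and the linear bound $\sigma N_{\max}$. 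This is exactly where the gain $\sigma^{3/4}$ and a loss of $N_{\max}^{3/4}$ in frequency come from, and it is why the exponent $3/4$ (tied to the $H^{-3/4}$ endpoint) appears.

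Next I would reduce the trilinear spacetime integral to the bilinear $L^2$ estimates already available. By Plancherel and Cauchy–Schwarz (or duality), the integral is controlled by
\[
\norm{w_{N_1}}_{?}\ \cdot\ \norm{ \Lambda^{-1} P_{N_3}\partial_x(w_{N_1} w_{N_2})}_{X}\ \text{-type quantities},
\]
so that after summing over modulations one can invoke Corollary \ref{cor-dydbiest} together with Lemma \ref{lm-keybiest} (and, in the delicate high-high-to-low case $N_1\sim N_2\gg N_3=1$, Lemma \ref{lm-lowfreqbiest} to avoid the logarithmic divergence, exactly as in Remark \ref{logloss}). One splits into the frequency regimes of Lemma \ref{lm-keybiest}: $N_3\sim N_2\gg N_1$, $N_1\sim N_2\sim N_3$, $N_1\sim N_2\gg N_3=1$, and $N_1\sim N_2\gg N_3>1$, plus the purely-low case $N_1,N_2,N_3\lesssim 1$. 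In each regime the multiplier bound $(\sigma N_{\max})^{3/4}$ supplies a factor $\sigma^{3/4} N_{\max}^{3/4}$, and the $N_{\max}^{3/4}$ is absorbed by the decay in $C(N)$ from \eqref{CN-biest}: e.g. $N_2^{-3/4}\cdot N_2^{3/4}$ in the fully-resonant case, $N_1^{-1}N_2^{-1/2+}\cdot N_2^{3/4}$ which is summable in the high-high case, etc. Summation over the dyadic parameters is then handled by \eqref{dyadicsum} and Cauchy–Schwarz in the same way as in the proof of Lemma \ref{lm-bilinearest}.

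The main obstacle is the high-high-to-low interaction $N_1\sim N_2\gg N_3=1$: there the naive bilinear constant $N_1^{-3/2+}$ from Lemma \ref{lm-keybiest}\eqref{keybiest1} only leaves $N_1^{-3/4+}$ after the $N_1^{3/4}$ loss, which is still summable but at the cost of an $\varepsilon$ — this is precisely the $\varepsilon$-loss that prevented Selberg–Da Silva from reaching the endpoint. To remove it one must instead use the improved $L_x^2 L_t^\infty$-estimate of Lemma \ref{lm-lowfreqbiest}, which gives the clean constant $N_1^{-3/2}$ and hence, after the $N_1^{3/4}$ loss, a genuinely summable $\sum_{N_1\gg1} N_1^{-3/2}$. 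A secondary technical point is bookkeeping: the operator $\Lambda^{-1}\partial_x$ must be inserted using Lemma \ref{lm-linearest}\eqref{linearest2} to pass from the raw spacetime integral to the $X$-norm bilinear estimates, and the time cutoff $\mathbbm{1}_I(t)$ with $|I|\le 1$ must be handled via \eqref{X-timeloc} (after replacing $\mathbbm{1}_I$ by a smooth bump, or absorbing it directly in $L^\infty_t$) — all routine once the multiplier bound and the case division are in place.
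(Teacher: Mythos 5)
There is a genuine gap, and it sits exactly at the heart of the argument: your pointwise multiplier bound is the wrong one. The quantity in the symbol of $f(w)$ is $r=\sigma\bigl(|\xi_1|+|\xi_2|-|\xi_1+\xi_2|\bigr)\ge 0$, and the whole point of the paper's proof is that
\[
0\le |\xi_1|+|\xi_2|-|\xi_1+\xi_2|=\frac{2\bigl(|\xi_1||\xi_2|-\xi_1\xi_2\bigr)}{|\xi_1|+|\xi_2|+|\xi_1+\xi_2|}\le 2\min\bigl(|\xi_1|,|\xi_2|\bigr),
\]
so that $1-e^{-r}\le r^{3/4}\lesssim \sigma^{3/4}\min(N_1,N_2)^{3/4}$: the frequency loss is $N_{\min}^{3/4}$ of the two \emph{inputs}, not $N_{\max}^{3/4}$ as you claim (your parenthetical ``$|\xi_1|,|\xi_2|\sim N_{\max}$'' is false in unbalanced interactions). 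With your weaker bound $\min(1,\sigma N_{\max})^{3/4}$ the scheme cannot close at the $G^{\sigma,0}$ level. Concretely, in the low--high interaction $N_1\ll N_2\sim N_3$ the available bilinear constants are $C(N)\sim N_1^{-1}N_2^{-\frac12+}$ (first line of \eqref{CN-biest}), or \eqref{hlh-est} with no decay in $N_2$ at all when $N_1=1$; multiplying by $N_{\max}^{3/4}\sim N_2^{3/4}$ leaves factors like $N_1^{-1}N_2^{\frac14+}$ (resp. $N_2^{3/4}$), and the resulting sums $\sum_{N_3}N_3^{\frac12+}\|w_{N_3}\|_X^2$ (resp. $\sum_{N_3}N_3^{\frac32}\|w_{N_3}\|_X^2$) are not controlled by $\|w\|_{\bar X^0}^2$. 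With the correct $\min(N_1,N_2)^{3/4}$ loss these cases give $N_1^{-\frac14}N_2^{-\frac12+}$ and $O(1)$ respectively, which is precisely what the paper sums (cases $\mathcal R_{32}$, $\mathcal R_{33}$).

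Two secondary remarks. First, your claim that the high--high-to-low case $N_1\sim N_2\gg N_3=1$ forces the use of Guo's $L_x^2L_t^\infty$ estimate (Lemma \ref{lm-lowfreqbiest}) to avoid an $\varepsilon$-loss does not match the structure of $\mathcal R$: the paper handles the low output frequency term $\mathcal R_1$ by H\"older in time, putting $P_1w$ in $L_t^\infty L_x^2$ and $P_1 f(w)$ in plain $L^2_{t,x}$, and then uses the dyadic estimate of Proposition \ref{prop-dydbiest}\eqref{cnl3} directly, with the factor $M$ from $\partial_x$ at output frequency $M\le 1$ and the $N_1^{3/4}=\min(N_1,N_2)^{3/4}$ loss giving a summable $N_1^{-1/4}$ — no Duhamel structure, hence no occasion to apply Lemma \ref{lm-lowfreqbiest}, and no $\varepsilon$ appears. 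Second, for $\mathcal R_2$ the passage from the trilinear integral to the $X$-norm of $\Lambda^{-1}P_{N_3}f(w)$ is done by Cauchy--Schwarz in the modulation parameter (pairing $\sup_{L_3}L_3^{1/2}\|\cdot\|$ against $\sum_{L_3}L_3^{-1/2}\|\cdot\|$), not via the Duhamel estimate of Lemma \ref{lm-linearest}\eqref{linearest2}; your bookkeeping sketch should be adjusted accordingly. The overall architecture you propose (dyadic decomposition, case split as in Lemma \ref{lm-keybiest}, summation via \eqref{dyadicsum} and Cauchy--Schwarz) is the right one, but without the $N_{\min}^{3/4}$ multiplier gain the proof does not go through.
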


\subsection{Lower bound for $\sigma$}

Let $f=u(0) \in G^{\sigma_0 }$ for some $\sigma_0 > 0$.
To construct a solution on $[0, T]$ for arbitrarily large $T$ we can apply
 the approximate conservation law \eqref{approx2} so as to repeat the local result 
on successive short time intervals to reach $T$, by adjusting the strip width parameter $\sigma$ according to the size of $T$.
By employing this strategy one can show that the solution $u$ to \eqref{kdv} satisfies
\begin{equation}
\label{uT}
  u(t) \in G^{\sigma(t) } \quad \text{for all }  \  t\in [0,  T] ,
 \end{equation}
with
\begin{equation}
\label{siglb}
\sigma(t) \ge  c T^{-\frac43}  ,
\end{equation}
where $c > 0$ is a constant depending on $\|f\|_{G^{\sigma_0}}$ and $\sigma_0$.

For completeness we include the proof \eqref{uT}--\eqref{siglb} here which is similar to that of \cite{SD2015}. 
 Define
\[
  \Gamma_\sigma(t) = \| u(t) \|_{G^{\sigma}},
\]
where $\sigma\in (0, \sigma_0]$ is a parameter to be chosen later.
By the local existence theory (see Theorem \ref{thm-lwp}) there is 
a solution $u$ to \eqref{kdv} satisfying
\[
  u(t) \in G^{\sigma_0} \quad \text{for all }  \  t\in [0,  t_0] 
\]
where $$ t_0 = C_0\left( \Gamma_{\sigma_0}(0)  \right).$$

Now fix $T$ arbitrarily large. We shall apply the above local result and \eqref{approx2} repeatedly, with a uniform time step $t_0$, and prove 
\begin{equation}\label{keybound}
\sup_{t\in [0, T]}  \Gamma_{\sigma}(t) \le 2\Gamma_{\sigma_0} (0)
\end{equation}
for $\sigma$ satisfying \eqref{siglb}. 
Hence we have $\Gamma_\sigma(t) < \infty$ for all $t \in [0,T]$, which in turn implies $u(t) \in G^{\sigma(t) }$, and this completes the proof of \eqref{uT}--\eqref{siglb}.

It remains to prove \eqref{keybound}. Choose $n \in \mathbb N$ so that $T \in [nt_0,(n+1)t_0)$. Using induction we can show for 
any $k \in \{1,\dots,n+1\}$ that
\begin{align}
  \label{induction1}
  \sup_{t \in [0, kt_0]} \Gamma^2_\sigma(t) &\le \Gamma^2_\sigma(0) + k 2^\frac32 C\sigma^\frac34   \Gamma^3_{\sigma_0}(0),
  \\
  \label{induction2}
  \sup_{t \in [0,kt_0]} \Gamma^2_\sigma(t) &\le 2\Gamma^2_{\sigma_0}(0),
\end{align}
provided $\sigma$ satisfies 
\begin{equation}\label{sigma}
  \frac{2T}{t_0}  2^\frac32 C\sigma^\frac34 \Gamma_{\sigma_0}(0) \le 1.
\end{equation}

Indeed, for $k=1$, we have from \eqref{approx2} that
\begin{align*}
  \sup_{t \in [0, t_0]} \Gamma^2_\sigma(t) &\le \Gamma^2_\sigma(0) +  C\sigma^\frac34 \Gamma^3_{\sigma}(0)
  \\
  &\le \Gamma^2_\sigma(0) +  
  C\sigma^\frac34 \Gamma^3_{\sigma_0}(0),
\end{align*}
where we used $\Gamma_\sigma(0) \le \Gamma_{\sigma_0}(0) $. This in turn implies \eqref{induction2} provided 
$$C\sigma^\frac34 \Gamma_{\sigma_0}(0) \le 1$$
 which holds by \eqref{sigma} since 
$T>t_0$.

Now 
assume \eqref{induction1} and \eqref{induction2} hold for some $k \in \{1,\dots,n\}$. Then applying \eqref{approx2}, \eqref{induction2} and \eqref{induction1}, respectively, we obtain
\begin{align*}
  \sup_{t \in [kt_0, (k+1)t_0]} \Gamma^2_\sigma(t) &\le \Gamma^2_\sigma(kt_0) +  C\sigma^\frac34  \Gamma^3_{\sigma}(kt_0) 
  \\
   &\le \Gamma^2_\sigma(kt_0) +  2^\frac32 C\sigma^\frac34 \Gamma^3_{\sigma_0}(0)
   \\
      &\le   \Gamma^2_\sigma(0) +  (k+1) 2^\frac32 C\sigma^\frac34  \Gamma^3_{\sigma_0}(0).
\end{align*}
Combining this with the induction hypothesis
 \eqref{induction1} (for $k$) we obtain
 \begin{align*}
  \sup_{t \in [0, (k+1)t_0]} \Gamma^2_\sigma(t) 
      &\le   \Gamma^2_\sigma(0) + (k+1) 2^\frac32 C\sigma^\frac34  \Gamma^3_{\sigma_0}(0).
\end{align*}
 which proves \eqref{induction1} for $k+1$. This also implies \eqref{induction2} for $k+1$ provided
 \begin{align*}
   ( k+1)   2^\frac32 C\sigma^\frac34 \Gamma_{\sigma_0}(0) \le 1.
\end{align*}
 But the latter follows from \eqref{sigma} since 
 $$
 k+1\le n+1\le \frac T{t_0}+ 1 \le \frac {2T}{t_0}.
 $$

Finally,  the condition \eqref{sigma} is satisfied for $\sigma$ such that
    $$
    \frac{2 T}{t_0} 2^\frac32 C\sigma^\frac34 \Gamma_{\sigma_0}(0)=1.
    $$
This implies
$$
\sigma=c_0 T^{-\frac43}, \quad \text{where} \quad c_0=  \left[\frac{ C_0\left( \Gamma_{\sigma_0}(0) \right)}{ 2^\frac52C  \Gamma_{\sigma_0}(0) } \right]^\frac43.
$$
Thus, by choosing $c$ such that $c\le c_0$ we obtain \eqref{siglb}.


\section{Proof of Lemma \ref{lm-Rest} }
We recall that
\begin{equation*}
\mathcal R=2\left|\int_{\R^{1+1} } \mathbbm{1}_I  (t)\cdot  w f(w) \, dt dx \right|,
\end{equation*}
where 
$$
f(w)=\frac12 \partial_x \left[  w\cdot w  -e^{\sigma |D_x| }\left( e^{-\sigma |D_x| }w \cdot   e^{-\sigma |D_x| }w  \right) \right].
$$
By Plancherel and \eqref{betasum} we have 
\begin{align*}
\mathcal R&=2\left|\int_{\R^{1+1} } \mathbbm{1}_I  (t)\cdot  \widehat w(\xi) \widehat{ f(w) }(-\xi) \, dt d\xi \right|,
\\
&=2\left| \sum_{N_3\ge 1}\int_{\R^{1+1} } \mathbbm{1}_I  (t)\cdot  \beta_{N_3}(\xi) \widehat w(\xi)  \beta_{N_3}(\xi)\widehat{ f(w) }(-\xi) \, dt d\xi \right|
\\
&\lesssim  \sum_{N_3\ge 1}\left|\int_{\R^{1+1} } \mathbbm{1}_I  (t)
\left[  P_{N_3} w \cdot P_{N_3} f(w)\right]  \, dt dx \right|
\\
&=\mathcal R_1 +\mathcal R_2,
\end{align*}
where
\begin{align*}
\mathcal R_1&=  \norm{ \mathbbm{1}_{I} (t)( P_1 w \cdot P_1 f(w) )}_{L^1_{t,x}},
\\
\mathcal R_2 &= \sum_{N_3> 1} \norm{ \mathbbm{1}_{I} (t)( P_{N_3}w \cdot P_{N_3} f(w) )}_{L^1_{t,x}}.
\end{align*}

Moreover, since $$w=\sum_{N\ge 1} w_{N}$$
 one can also write
\begin{align*}
2 P_{N_3}&f(w)
\\
 &=\sum_{N_1, N_2\ge 1}P_{N_3} \partial_x \left[  w_{N_1}  \cdot w_{N_2} -e^{\sigma |D_x| }\left(  e^{-\sigma |D_x| }  w_{N_1} \cdot  e^{-\sigma |D_x| }  w_{N_2} \right)  \right].
\end{align*}
Now taking the Fourier Transform of $P_{N_3}f(w)$ we get
\begin{align*}
&2|\mathcal F_x \left[P_{N_3}f(w) \right](\xi)| 
\\
 & \qquad\le   \beta_{N_3}(\xi) 
\sum_{N_1, N_2\ge 1} \int_{\R^2}  |\xi|
 \left|    1  -e^{-\sigma\left ( | \xi_1 | +| \xi_2 | -| \xi_1+\xi_2 | \right)} \right| \prod_{j=1}^2    \beta_{N_j}(\xi_j)| \widehat{w }( \xi_j)| 
 \, d\mu(\xi),
\end{align*}
 where $d\mu$ is the surface measure
$$
d\mu (\xi)=\delta (
\xi-\xi_1-\xi_2 ) d\xi_1 d\xi_2.
$$
Note that for $r \ge 0$ and $0\le \theta \le 1$ we have the simple  inequality 
$$1-e^{-r}\le r^\theta .$$
Setting $r=\sigma ( | \xi_1 | +| \xi_2 | -| \xi_1+\xi_2 | )\ge 0$ and  $\theta=\frac34$ we obtain
\begin{align*}
  1 - e^{-\sigma ( | \xi_1 | +| \xi_2 | -| \xi_1+\xi_2 | )}& \le
   \sigma^\frac34( | \xi_1 | +| \xi_2 | -| \xi_1+\xi_2 | )^\frac34
   \\
   &= \sigma^\frac34 \left (\frac{(| \xi_1 | +| \xi_2 |) ^2-| \xi_1+\xi_2 |^2}{| \xi_1 | +| \xi_2 | +| \xi_1+\xi_2 | }\right)^\frac34
   \\
   &= \sigma^\frac34 \left (\frac{2(| \xi_1 | | \xi_2 |- \xi_1\xi_2 )}{| \xi_1 | +| \xi_2 | +| \xi_1+\xi_2 | }  \right)^\frac34
   \\
   &\le \sigma^\frac34  \left (2 \min(| \xi_1 |, | \xi_2 |) \right)^\frac34
 \\
   & \sim \sigma^\frac34   \min(N_1,N_2)^\frac34.
\end{align*}
Thus, we have
\begin{align*}
&\left|\mathcal F_x \left[ P_{N_3}f(v) \right](\xi)\right| 
\\
 &\qquad\lesssim \sigma^\frac34  \sum_{N_1, N_2\ge 1}  \min(N_1,N_2)^\frac34 |\xi|  \beta_{N_3}(\xi)    \int_{\R^2}
 \prod_{j=1}^2   \beta_{N_j}(\xi_j)|\widehat{w }( \xi_j)| 
 \, d\sigma( \xi)
 \\
 & \qquad = \sigma^\frac34  \sum_{N_1, N_2\ge 1}  \min(N_1,N_2)^\frac34
\left| \mathcal F_x \left[ P_{N_3}\partial_x \left( w_{N_1}   w_{N_2}    \right) \right](\xi) \right| ,
\end{align*}
and hence by Plancherel
\begin{equation}\label{fwest}
 \| P_{N_3}f(w)\|_{L_x^2}
\lesssim \sigma^\frac34 \sum_{N_1, N_2\ge 1}  \min(N_1,N_2)^\frac34  \|  P_{N_3}\partial_x \left( w_{N_1}   w_{N_2}  \right) \|_{L_x^2}.
\end{equation}

Now we give the estimate for $\mathcal R_1$ and $\mathcal R_2$.
\subsection{Estimate for $\mathcal R_1$}
Recall that  $I=[0, t_0]$, where $ t_0\le 1$. By H\"{o}lder inequality and Lemma \ref{lm-embed}\eqref{emb1} we have
\begin{align*}
\mathcal R_1  &\lesssim \norm{ \mathbbm{1}_{I} (t)( P_1 w \cdot P_1 f(w) )}_{{L^1_{t,x}}}
\\
&\lesssim t_0^\frac12  \norm{  P_1 w}_{ L_t^\infty L_x^2 }\norm{ \mathbbm{1}_{I} (t)P_1 f(w) )}
\\
&\lesssim   \norm{   w}_{ \bar X^0 }\norm{ \mathbbm{1}_{I} (t)P_1 f(w) )}.
\end{align*}

Now we claim that
\begin{equation}\label{R11}
\mathcal R_{11}:=\norm{ \mathbbm{1}_{I} (t)P_1 f(w) )} \lesssim \sigma^\frac34  \norm{ w }^2_{\bar X^{0} }.
\end{equation}
This in turn implies the desired estimate for $\mathcal R_1 $, i.e.,
$$
\mathcal R_1 \lesssim \sigma^\frac34  \norm{ w }^3_{\bar X^{0} } .
$$

Next we prove claim \eqref{R11}.
By \eqref{fwest} we have
\begin{align*}
\mathcal R_{11}&\lesssim \sigma^\frac34   \sum_{N_1, N_2\ge 1} \min(N_1,N_2)^\frac34  \norm{ \mathbbm{1}_{I} (t) P_1 \partial_x  \left( w_{N_1}  \cdot w_{N_2}  \right)}.
\end{align*}
By symmetry we may assume $N_1\le N_2$.
\subsubsection{Case: $1\le N_1 \le N_2\lesssim 1$ }
By 
Sobolev, H\"{o}lder inequality and Lemma \ref{lm-embed}\eqref{emb2}
\begin{align*}
\mathcal R_{11}
&\lesssim t_0^\frac12 \sigma^\frac34   \sum_{1\le N_1\le  N_2\lesssim 1} N_1^
\frac34  \norm{P_1 \left( w_{N_1}  \cdot w_{N_2}  \right)}_{ L_t^\infty L_x^1}
\\
&\lesssim   t_0^\frac12 \sigma^\frac34   \sum_{1\le N_1\le  N_2\lesssim 1}   \norm{ w_{N_1} }_{ L_t^\infty L_x^2}\norm{  w_{N_2} }_{ L_t^\infty L_x^2}
\\
&\lesssim   \sigma^\frac34   \norm{   w}^2_{ \bar X^0 }.
\end{align*}

\subsubsection{Case: $ N_1 \sim N_2\gg 1$ }
Decomposing in modulation and in the output frequency we get
\begin{align*}
\mathcal R_{11}
&\lesssim \sigma^\frac34   \sum_{N_1\sim N_2\gg 1}  \sum_{ L_1, L_2, L_3\ge 1}N_1^
\frac34   \norm{ P_1 Q_{L_3} \partial_x ( P_{N_1} Q_{L_1} w  \cdot P_{N_2} Q_{L_2} w)}
\\
&\lesssim \sigma^\frac34   \sum_{N_1\sim N_2\gg 1, }  \sum_{0<M \le 1}   N_1^
\frac34 M \cdot  \mathcal K_M(N_1, N_2)
\end{align*}
where 
$$
\mathcal K_M(N_1, N_2)=
 \sum_{ L_1, L_2, L_3\ge 1}  \norm{ \dot P_{M} Q_{L_3} ( P_{N_1} Q_{L_1} w \cdot P_{N_2} Q_{L_2} w)}.
$$

Next we show that
\begin{equation}\label{AN12}
\mathcal K_M(N_1, N_2)
\lesssim  N_1^
{-1}   \|   w_{N_1}  \|_{X} \|   w_{N_2}  \|_{X}.
\end{equation}
This in turn implies
\begin{align*}
\mathcal R_{11}
&\lesssim \sigma^\frac34   \sum_{N_1\sim N_2\gg 1 }    N_1^{-
\frac14}  \|   w_{N_1} \|_{X}\|   w_{N_2} \|_{X}
\\
&\lesssim   \sigma^\frac34   \norm{   w}^2_{ \bar X^0 },
\end{align*}
where we used Cauchy-Schwarz in $N_1\sim N_2$.

Now we prove \eqref{AN12}.
By Proposition \ref{prop-dydbiest}\eqref{cnl3} (see also Remark \ref{rmk-prop1} and Corollary \ref{cor-dydbiest}) we obtain
\begin{align*}
\mathcal K_M(N_1, N_2)
&\lesssim N_1^
{-1}  \sum_{ L_1, L_2, L_3\ge 1} ( L_{\text{min}} L_{\text{med}})^\frac12 \prod_{j=1}^2 \|   P_{N_j} Q_{L_j} w \|.
\end{align*}

By symmetry we may assume $L_1\le L_2$.
If $L_2\ge L_3$, then by \eqref{dyadicsum}
\begin{align*}
\mathcal K_M(N_1, N_2)
&\lesssim N_1^
{-1}      \sum_{ L_1, L_2\ge 1}   \sum_{  L_3 \le L_2}L_1^\frac12  L_3^\frac12   \prod_{j=1}^2 \| P_{N_j} Q_{L_j}  w\|
\\
&\lesssim N_1^
{-1}  \prod_{j=1}^2   \sum_{ L_j\ge 1}  L_j ^\frac12 \|   P_{N_j} Q_{L_j} w \|
\\
&= N_1^
{-1}  \|   w_{N_1}  \|_{X} \|   w_{N_2}  \|_{X}.
\end{align*}

 If $L_2\le L_3$ then $L_3\sim N_1^2 M$, and hence by \eqref{dyadicsum} (with $\alpha\sim \beta$) we obtain
\begin{align*}
\mathcal K_M(N_1, N_2)
&\lesssim N_1^
{-1}  \prod_{j=1}^2   \sum_{ L_j\ge 1}  L_j ^\frac12 \|   P_{N_j} Q_{L_j} w_j \|  \left(\sum_{  L_3 \sim  N_1^2 M} 1 \right)
\\
&= N_1^
{-1}  \|   w_{N_1}  \|_{X} \|   w_{N_2}  \|_{X}.
\end{align*}


\subsection{Estimate for $\mathcal R_2$}
Decomposing in modulation and using Cauchy-Schwarz we obtain
\begin{align*}
\mathcal R_2
 &\lesssim \sum_{N_3> 1} \sum_{ L_3 \ge 1} \norm{ \mathbbm{1}_{I}\left( P_{N_3}Q_{L_3} w \cdot P_{N_3}Q_{L_3} f(w)\right)}_{L^1_{t,x}}
\\
&\lesssim \sum_{N_3> 1}  \left(\sup_{L_3\ge 1} L_3^{\frac12}\norm{ \mathbbm{1}_{I} P_{N_3}Q_{L_3} w } \right) \left(\sum_{L_3\ge 1} L_3^{-\frac12}\norm{ \mathbbm{1}_{I}P_{N_3}Q_{L_3} f(w)}\right)
\\
&\lesssim \sum_{N_3> 1}  \norm{  w_{N_3} }_{ X} 
 \norm{ \Lambda^{-1} P_{N_3} f(w) }_{X}
 \\
 &\lesssim 
 \left( \sum_{{N_3}>1}  \norm{  w_{N_3} }^2_{X} \right)^\frac12 \cdot \left( \sum_{{N_3}> 1}  
 \norm{ \Lambda^{-1} P_{N_3} f(w) }^2_{X} \right)^\frac12
\end{align*}
By \eqref{fwest} we have
\begin{align*}
 &\norm{ \Lambda^{-1} P_{N_3} f(w) }_{X } 
 \\
 & \qquad  \lesssim  \sigma^\frac34  \sum_{N_1,N_2 \ge 1
}\min(N_1,N_2)^\frac34  \norm{ \Lambda^{-1} P_{N_3}  \partial_x( w_{N_1}  \cdot w_{N_2})}_{X }.
\end{align*}

Then
\begin{align*}
\mathcal R_2& \lesssim \sigma^\frac34  \norm{ w }_{\bar X^{0} } \cdot \left( \sum_{{N_3}> 1}  \left(  \sum_{ N_1,  N_2\ge 1 
}\min(N_1,N_2)^\frac34  \norm{ \Lambda^{-1} P_{N_3}  \partial_x( w_{N_1}  w_{N_2} )}_{X }\right)^2
  \right)^\frac12
  \\
  & \lesssim \sigma^\frac34  \norm{ w }_{\bar X^{0} }  \cdot \left( \mathcal R^\frac12_3+ \mathcal R^\frac12_4 \right),
\end{align*}
where
\begin{align*}
\mathcal R_{3}&= \sum_{{N_3}> 1}  \left(  \sum_{ 1\le N_1\le N_2 
}N_1^\frac34  \norm{ \Lambda^{-1} P_{N_3}  \partial_x( w_{N_1}  w_{N_2} )}_{X }\right)^2,
\\
   \mathcal R_{4}&= \sum_{{N_3}> 1}  \left(  \sum_{ N_1\ge  N_2\ge 1 
}N_2^\frac34 \norm{ \Lambda^{-1} P_{N_3}  \partial_x( w_{N_1}  w_{N_2} )}_{X }\right)^2.
\end{align*}

By symmetry we may only estimate $\mathcal R_3$. Thus, it suffices to prove
\begin{equation*}
\mathcal R_{3}\lesssim \norm{ w}^4_{\bar X^{0 }} .
\end{equation*}
In view of \eqref{Ncompare} this reduces further to
\begin{equation}\label{R3}
\mathcal R_{3k}\lesssim   \norm{ w}^4_{\bar X^{0 }} \quad  (k=1,\cdots, 5) ,
\end{equation}
where
\begin{align*}
\mathcal R_{31}&= \sum_{ N_3\sim 1} \left ( \sum_{ 1\le N_1\le  N_2\lesssim  1} (\cdot)\right)^2,  
\quad 
\mathcal R_{32}= \sum_{ N_3>1 } \left ( \sum_{ 1= N_1\ll N_2\sim N_3 } (\cdot)\right)^2,  
\\
\mathcal R_{33}&= \sum_{ N_3>1 } \left ( \sum_{ 1< N_1\ll N_2\sim N_3 } (\cdot)\right)^2,  
\ \
\mathcal R_{34}= \sum_{ N_3\gg 1 } \left ( \sum_{  N_1\sim N_2\sim N_3 } (\cdot)\right)^2,  
\quad 
\mathcal R_{35}= \sum_{ N_3>1 } \left ( \sum_{  N_1\sim N_2\gg N_3 } (\cdot)\right)^2,  
\end{align*}

(i).  $\mathcal R_{31}$: 
By Lemma \ref{lm-keybiest}\eqref{keybiest3} and Lemma \ref{lm-embed}\eqref{emb1} we have 
\begin{align*}
\mathcal R_{31}&\lesssim  \sum_{N_3 \sim 1
} \left(  \sum_{1\le N_1\le N_2 \lesssim 1
}   \norm{ w_{N_1}}_{L_t^\infty L_x^2 } \norm{ w_{N_2}
 }_{L_t^\infty L_x^2 }\right)^2
 \\
 &\lesssim \norm{ w }^4_{\bar X^{0} }  .
\end{align*}

(ii). $\mathcal R_{32}$: 
By Lemma \ref{lm-keybiest}\eqref{keybiest2} and \eqref{dyadicsum} we have 
\begin{align*}
\mathcal R_{32}&\lesssim  \sum_{N_3 \gg 1
} \left ( \sum_{1= N_1\ll N_2 \sim N_3
}    \norm {  w_{N_1}  }_{L^2_xL_t^\infty} 
\norm {  w_{N_2} }_{X} \right)^2
 \\
 &\lesssim  \norm{ w }^2_{\bar X^{0 } }  \sum_{N_3 \gg 1
} \left(  \sum_{N_2 \sim N_3
}    
\norm {  w_{N_2} }_{X} \right)^2
\\
 &\lesssim \norm{ w }^4_{\bar X^{0 } }  .
\end{align*}

(iii). $\mathcal R_{33}$: 
By Lemma \ref{lm-keybiest}\eqref{keybiest1} with $C(N)$ as in the first line of \eqref{CN-biest} and \eqref{dyadicsum} we have 
\begin{align*}
\mathcal R_{33}&\lesssim  \sum_{N_3 \gg 1
} \left(  \sum_{1< N_1\ll N_2 \sim N_3
}   N_1 ^{-\frac14}N_2^{-\frac12+}   \norm{ w_{N_1} }_{X } \norm{ w_{N_2} }_{X }  \right)^2
 \\
 &\lesssim  \norm{ w }^2_{\bar X^{0} }  \sum_{N_3 \gg 1
} \left(  \sum_{N_2 \sim N_3
}     N_2^{-\frac12+} 
\norm {  w_{N_2} }_{X} \right)^2
\\
 &\lesssim \norm{ w }^4_{\bar X^{0 } }  ,
\end{align*}
where to obtain the second inequality we used Cauchy-Schwarz in $N_1$.

(iv).  $\mathcal R_{34}$: 
By Lemma \ref{lm-keybiest}\eqref{keybiest1} with $C(N)$ as in the second line of \eqref{CN-biest} and \eqref{dyadicsum} we have 
\begin{align*}
\mathcal R_{34}&\lesssim  \sum_{N_3 \gg 1
} \left(  \sum_{ N_1\sim N_2 \sim N_3 \gg 1
}    \norm{ w_{N_1} }_{X } \norm{ w_{N_2} }_{X }  \right)^2
 \\
 &\lesssim  \norm{ w }^2_{\bar X^{0} }  \sum_{N_3 \gg 1
} \left(  \sum_{N_2 \sim N_3
}    
\norm {  w_{N_2} }_{X} \right)^2
\\
 &\lesssim \norm{ w }^4_{\bar X^{0 } },
\end{align*}
where to obtain the second inequality we used Cauchy-Schwarz in $N_1\sim N_2$.

(v). $\mathcal R_{35}$: 
By Lemma \ref{lm-keybiest}\eqref{keybiest1} with $C(N)$ as in the fourth line of \eqref{CN-biest} and \eqref{dyadicsum} we have 
\begin{align*}
\mathcal R_{35}&\lesssim  \sum_{N_3 > 1
} \left(  \sum_{ N_1\sim N_2 \gg N_3 
}  N_1^{\frac34}  \max\left( N_1^{-\frac32} , N_1^{-2+}  N_3^{\frac12} \right) \norm{ w_{N_1} }_{X } \norm{ w_{N_2} }_{X }  \right)^2
 \\
 &\lesssim  \norm{ w }^4_{\bar X^{0 } }   \sum_{N_3 > 1}
N_3 ^{-\frac14}  
\\
 &\lesssim \norm{ w }^4_{\bar X^{0} } ,
\end{align*}
where to obtain the second inequality we used Cauchy-Schwarz in $N_1\sim N_2$.


\appendix

\section{Proof of Lemma \ref{lm-keybiest}}
First we prove \eqref{hlh-est} and \eqref{lll-est}.  
 By definition of $X$, H\"{o}lder inequality, \eqref{stre4} we have
\begin{align*}
 \norm {  
\Lambda^{-1} P_{N_3} \partial_x \left ( u_{N_1} v_{N_2}  \right) }_{X}  
 &\lesssim N_2 \sum_{L \geq 
1}L^{-\frac12} \norm{P_{N_3}Q_L \left( u_{N_1} v_{N_2}   \right)}
\\
 &\lesssim N_2  \norm{  u_{N_1} v_{N_2}  }
\\
& \lesssim N_2  \norm{u_{N_1}  }_{L_x^2 L_t^\infty }
\norm{ v_{N_2}  }_{L_x^\infty  L_t^2 }
\\
& \lesssim  \norm{u_{N_1}  }_{L_x^2 L_t^\infty }
\norm{ v_{N_2}  }_{X},
\end{align*}
where we also used the fact that $P_{N}$ and $Q_{L}$ are bounded in $L^2$. Thus, \eqref{hlh-est} is proved.  Similarly,
by definition of $X$, H\"{o}lder and Bernstein's inequality we obtain 
\begin{align*}
 \norm {  
 \mathbb{1}_I (t) \Lambda^{-1} P_{N_3} \partial_x  \left ( u_{N_1} v_{N_2}   \right) }_{X} 
 &\lesssim  \sum_{L \geq 
1}L^{-\frac12} \norm{  P_{N_3}Q_L \left( 
 \mathbb{1}_I (t)  u_{N_1}  v_{N_2} \right)}
\\
&\lesssim \norm{ 
 \mathbb{1}_I (t)   P_{N_3}\left( u_{N_1}  v_{N_2}  \right)}
\\
&\lesssim \norm{   (u_{N_1}  v_{N_2})  }_{L_t^\infty L_x^1}
\\
& \lesssim   \norm{u_{N_1} }_{L_t^\infty  L_x^2}
\norm{ v_{N_2}  }_{L_t^\infty L_x^2 }
\end{align*}
which is \eqref{lll-est}.

To prove \eqref{keybi-est}--\eqref{CN-biest} we repeatedly use Corollary \ref{cor-dydbiest},  Proposition \ref{prop-dydbiest}, the constraints in \eqref{Ncompare} and \eqref{lmaxmed}. 
To this end we set
\begin{equation*} 
 \mathcal J(N) =\norm {  
\Lambda^{-1} P_{N_3} \partial_x \left ( u_{N_1} v_{N_2}  \right) }_{X}  
\end{equation*}
and 
denote 
$$  u_{N_1, L_1}=P_{N_1} Q_{L_1}u  , \quad v_{N_2, L_2}=P_{N_2} Q_{L_2} v .$$

We now prove \eqref{keybi-est}--\eqref{CN-biest} by estimating $\mathcal J(N)$ case by case.

\subsection{Case $N_3 \sim N_2 \gg N_1>1$ }
By definition of $X$, decomposition in modulation, Corollary \ref{cor-dydbiest} with $C(N,L)$  as in Proposition \ref{prop-dydbiest} 
we have
\begin{align*}
 \mathcal J(N)
 &\lesssim N_2 \sum_{L_3 \geq 
1}L_3^{-\frac12} \norm{P_{N_3}Q_{L_3} \left(  u_{N_1}   v_{N_2}    \right)}
\\
& \lesssim N_2 \sum_{ L_1, L_2, L_3 \geq 
1}L_3^{-\frac12} \norm{P_{N_3}Q_{L_3} \left(  u_{N_1, L_1} \cdot v_{N_2, L_2}\right)},
\\
& \lesssim N_2 \sum_{ L_1, L_2, L_3 \geq 
1}L_3^{-\frac12}  C(N,L) \|    u_{N_1, L_1} \|_{L^2} \|   v_{N_2, L_2} \|.
\end{align*}

By assumption, \eqref{lmaxmed}, we have
$$
L_{\text{max}}\gtrsim N_1 N^2_2.
$$
If $L_\text{max} \gtrsim N^6_2$,  then we choose $C(N,L)$ as in Proposition \ref{prop-dydbiest}\eqref{cnl3} to obtain
\begin{align*}
\mathcal J(N)
& \lesssim N_1^{\frac12} N_2 \sum_{ L_\text{max} \gtrsim N^6_2}L_3^{-\frac12} L_{\text{min}}^\frac12 (L_1L_2)^{-\frac12}   \left(L_1^\frac12\|   u_{N_1, L_1}\| \right) \left( L_2^\frac12\|   v_{N_2, L_2}\|\right)
\\
& \lesssim  N_2^{-\frac32} 
\norm {  u_{N_1} }_{X} \norm {  v_{N_2} }_{X}.
\end{align*}

Next assume  $L_\text{max} \ll N^6_2$.  Choosing $C(N,L)$ as in Proposition \ref{prop-dydbiest}\eqref{cnl2}, i.e.,
$$
C(N,L)\lesssim (N_1N_2)^{-\frac12} (L_\text{min}L_\text{med})^\frac12, 
$$
we obtain
\begin{align*}
 \mathcal J(N)
& \lesssim N_1^{-\frac12} N_2^\frac12 \sum_{N_1 N^2_2\lesssim  L_\text{max} \ll N^6_2} C(L) \left(L_1^\frac12\|   u_{N_1, L_1}\| \right) \left( L_2^\frac12\|   v_{N_2, L_2}\|\right),
\end{align*}
where
$$
C(L)=L_3^{-\frac12} (L_\text{min}L_\text{med})^{\frac12}(L_1L_2)^{-\frac12}=L_\text{max}^{-\frac12}.
$$
 Now if $L_{\text{max}}\sim L_3$
we have \begin{align*}
\mathcal J(N)
& \lesssim N_1^{-1} N_2^{-\frac12}\norm {  u_{N_1} }_{X} \norm {  v_{N_2} }_{X}.
\end{align*}
If $L_{\text{max}}\sim L_1$ or $L_2$, then
 \begin{align*}
\mathcal J(N)
& \lesssim N_1^{-1} N_2^{-\frac12+}  \norm {  u_{N_1} }_{X} \norm {  v_{N_2} }_{X}.
\end{align*}

\subsection{Case $N_3 \sim N_2 \sim N_1\gg 1$ }
 Proceeding as above, for
 $C(N,L)$
is as in Proposition \ref{prop-dydbiest}, we obtain.
\begin{align*}
\mathcal J(N)& \lesssim N_1 \sum_{ L_1, L_2, L_3 \geq 
1}L_3^{-\frac12}  C(N,L)  \|    u_{N_1, L_1} \| \|   v_{N_2, L_2} \|,
\end{align*}

\subsubsection{Sub-case: $L_{\text{max}} \sim N_1^3 $}
Choosing $C(N,L)$ as in Proposition \ref{prop-dydbiest}\eqref{cnl1}, we get
\begin{align*}
\mathcal J(N)
& \lesssim N_1^\frac34\sum_{ L_{\text{max}} \sim N_1^3 } C(L) \left(L_1^\frac12\|   u_{N_1, L_1}\| \right) \left( L_2^\frac12\|   v_{N_2, L_2}\|\right),
\end{align*}
where
$$
C(L)=L_3^{-\frac12} L_\text{min}^{\frac12}L_\text{med}^{\frac14}(L_1L_2)^{-\frac12}=L_\text{med}^{-\frac14}L_\text{max}^{-\frac12}.
$$
By symmetry, we may assume $L_1\ge L_2$. It suffices to consider the case $L_2\ge L_3$ (the other cases are easier to deal with).
Then 
\begin{align*}
\mathcal J(N)
& \lesssim N_1^\frac34 \sum_{ L_1 \sim N_1^3 }  L_1^{-\frac12}L_2^{-\frac14}\left(L_1^\frac12\|   u_{N_1, L_1}\| \right) \left( L_2^\frac12\|   v_{N_2, L_2}\|\right)
\\
& \lesssim N_1^{-\frac34}  \norm {  u_{N_1} }_{X} 
\sum_{ L_2, L_3: L_2\ge L_3 } L_2^{-\frac14}L_2^{\frac12}\|    v_{N_2, L_2}\|
\\
& \lesssim N_1^{-\frac34}  \norm {  u_{N_1}  }_{X} \norm {  v_{N_2}  }_{X} .
\end{align*}

\subsubsection{Sub-case: $L_{\text{max}} \sim L_{\text{med}}  \gg N_1^3 $}
Choosing $C(N,L)$ as in Proposition \ref{prop-dydbiest}\eqref{cnl3}, we have
\begin{align*}
 \mathcal J(N)
& \lesssim N_1^\frac32 \sum_{ L_{\text{max}} \sim L_{\text{med}} \gg N_1^3 } L_{\text{min }}^\frac12 (L_1L_2)^{-\frac12}\left(L_1^\frac12\|   u_{N_1, L_1}\| \right) \left( L_2^\frac12\|   v_{N_2, L_2}\|\right).
\end{align*}
By symmetry we may assume $L_1\le L_2 \le L_3$ which in turn implies 
$L_2\sim L_3\gg N_1^3$. Then
\begin{align*}
 \mathcal J(N)
& \lesssim N_1^\frac32 \sum_{ L_2 \sim L_3 \gg N_1^3 } L_2^{-\frac12} \left(L_1^\frac12\|   u_{N_1, L_1}\| \right) \left( L_2^\frac12\|   v_{N_2, L_2}\|\right)
\\
& \lesssim N_1^{-\frac34}  \norm {  u_{N_1}  }_{X} \norm {  v_{N_2}  }_{X} .
\end{align*}

\subsection{Case $N_1 \sim N_2 \gg N_3= 1$ }

By definition of $X$, decomposing in modulation and in the output frequency, and using 
 Proposition \ref{prop-dydbiest} we obtain
\begin{align*}
 \mathcal J(N)
 &\lesssim  \sum_{L_3 \geq 
1}L_3^{-\frac12} \norm{P_1 Q_{L_3} \partial_x \left ( u_{N_1} v_{N_2}  \right)}
\\
& \lesssim \sum_{0<M \le 1} \sum_{ L_1, L_2, L_3 \geq 
1}L_3^{-\frac12} M  \norm{ \dot P_{M}Q_{L_3} \left(  u_{N_1, L_1} \cdot v_{N_2, L_2}\right)}
\\
& \lesssim   \sum_{0<M\le 1} \sum_{ L_1, L_2, L_3 \geq 
1}L_3^{-\frac12} M \cdot  C(N,L) \|    u_{N_1, L_1} \| \|   v_{N_2, L_2} \|.
\end{align*}

We may assume $M \ge N_1^{-2}$, since otherwise the desired estimate follows easily.
\subsubsection{Sub-case: $L_{\text{max}} \sim N_1^2 M $}
We choose $C(N,L)$ as in Proposition \ref{prop-dydbiest}\eqref{cnl2}, i.e., 
$$ C(N,L) \lesssim (N_1M)^{-\frac12} ( L_{\text{min}}L_{\text{med}})^\frac12.$$
We may assume $L_{\text{max}}=L_3$, since the other cases are easier.
Then
\begin{align*}
\mathcal J(N)
& \lesssim   \sum_{N_1^{-2}<M\le 1} 
\sum_{ L_1, L_2 \geq 
1}N_1^{-\frac32} \left(L_1^\frac12\|   u_{N_1, L_1}\| \right) \left( L_2^\frac12\|   v_{N_2, L_2}\|\right)
\\
& \lesssim N_1^{-\frac32+}  \norm {  u_{N_1}  }_{X} \norm {  v_{N_2}  }_{X}.
\end{align*}

\subsubsection{Sub-case: $L_{\text{max}} \sim L_{\text{med}}  \gg N_1^2 M $}
We choose $C(N,L)$ as in Proposition \ref{prop-dydbiest}\eqref{cnl3} we obtain
\begin{align*}
 \mathcal J(N)
& \lesssim   \sum_{N_1^{-2}<M\le 1}  \sum_{ L_{\text{max}} \sim L_{\text{med}} \gg N_1^2 N_3 }  M L_{\text{min }}^\frac12 (L_1L_2L_3)^{-\frac12} \left(L_1^\frac12\|   u_{N_1, L_1}\| \right) \left( L_2^\frac12\|   v_{N_2, L_2}\|\right)
\\
& \lesssim N_1^{-2+}  \norm {  u_{N_1}  }_{X} \norm {  v_{N_2}  }_{X}.
\end{align*}

\subsection{Case $N_1 \sim N_2 \gg N_3>1$ }
Proceeding as in Subsection A.1 we obtain
\begin{align*}
 \mathcal J(N)
& \lesssim N_3 \sum_{L_{\text{max}}\gtrsim N_1^2 N_3}L_3^{-\frac12}  C(N,L) \|   u_{N_1, L_1}\|  \|   v_{N_2, L_2}\| .
\end{align*}

If $L_\text{max} \gtrsim N^6_1$,  then choosing $C(N,L)$ as in Proposition \ref{prop-dydbiest}\eqref{cnl3}, we obtain
\begin{align*}
 \mathcal J(N)
& \lesssim N_3^{\frac32}  \sum_{ L_\text{max} \gtrsim N^6_1}L_3^{-\frac12} L_{\text{min}}^\frac12 (L_1L_2)^{-\frac12} \left(L_1^\frac12\|   u_{N_1, L_1}\| \right) \left( L_2^\frac12\|   v_{N_2, L_2}\|\right)
\\
& \lesssim  N_1^{-3} N_3^{\frac32} \norm {  u_{N_1}  }_{X} \norm {  v_{N_2}  }_{X}.
\end{align*}

Next assume $L_\text{max} \ll N^6_1$. In this case we choose $C(N,L)$ as in Proposition \ref{prop-dydbiest}\eqref{cnl2}, i.e.,
$$
C(N,L) \lesssim (N_1N_3)^{-\frac12} (L_\text{min}L_\text{med})^\frac12,$$
to obtain
\begin{align*}
 \mathcal J(N)
& \lesssim N_1^{-\frac12} N_3^\frac12 \sum_{N_1^2 N_3\lesssim  L_\text{max} \ll N^6_1} C(L) \left(L_1^\frac12\|   u_{N_1, L_1}\| \right) \left( L_2^\frac12\|   v_{N_2, L_2}\|\right)
\end{align*}
where
$$
C_L=L_3^{-\frac12} (L_\text{min}L_\text{med})^{\frac12}(L_1L_2)^{-\frac12}=L_\text{max}^{-\frac12}.
$$
 Now if $L_{\text{max}}\sim L_3$
we have \begin{align*}
 \mathcal J(N)
& \lesssim N_1^{-\frac32}   \norm {  u_{N_1}  }_{X} \norm {  v_{N_2}  }_{X}.
\end{align*}
If $L_{\text{max}}\sim L_1$ or $L_2$, then
 \begin{align*}
 \mathcal J(N)
& \lesssim N_1^{-\frac32+}  \norm {  u_{N_1}  }_{X} \norm {  v_{N_2}  }_{X}.
\end{align*}

By symmetry, we may assume $L_1\ge L_2$. It suffices to consider the case $L_2\ge L_3$ (the other cases are easier to deal with).
Then 
\begin{align*}
 \mathcal J(N)
& \lesssim N_1^{-\frac12} N_3^\frac12 \sum_{N_1^2 N_3\lesssim  L_1 \ll N^6_1}   L_1^{-\frac12}  \left(L_1^\frac12\|   u_{N_1, L_1}\| \right) \left( L_2^\frac12\|   v_{N_2, L_2}\|\right)
\\
& \lesssim N_1^{-\frac34}  \norm {  u_{N_1} }_{X} 
\sum_{ L_2, L_3: L_2\ge L_3 } L_2^{-\frac12}L_2^{\frac12}\|   v_{N_2,L_2} \|
\\
& \lesssim N_1^{-\frac34}  \norm {  u_{N_1}  }_{X} \norm {  v_{N_2}  }_{X}.
\end{align*}


\bibliographystyle{amsplain}
\bibliography{kdv}

\providecommand{\bysame}{\leavevmode\hbox to3em{\hrulefill}\thinspace}
\providecommand{\MR}{\relax\ifhmode\unskip\space\fi MR }
\providecommand{\MRhref}[2]{%
  \href{http://www.ams.org/mathscinet-getitem?mr=#1}{#2}
}
\providecommand{\href}[2]{#2}
\begin{thebibliography}{10}

\bibitem{BGK2005}
J.~L. Bona, Z.~Gruji{\'c}, and H.~Kalisch, \emph{Algebraic lower bounds for the
  uniform radius of spatial analyticity for the generalized {K}d{V} equation},
  Ann. Inst. H. Poincar\'e Anal. Non Lin\'eaire \textbf{22} (2005), no.~6,
  783--797. \MR{2172859 (2006e:35282)}

\bibitem{CCT2003}
M.~Christ, J.~Colliander, and T.~Tao, \emph{Asymptotics, frequency modulation,
  and low regularity ill-posedness for canonical defocusing equations}, Amer.
  J. Math. \textbf{125} (2003), no.~6, 1235--1293. \MR{2018661 (2005d:35223)}

\bibitem{CKSTT2003}
J.~Colliander, M.~Keel, G.~Staffilani, H.~Takaoka, and T.~Tao, \emph{Sharp
  global well-posedness for {K}d{V} and modified {K}d{V} on {$\Bbb R$} and
  {$\Bbb T$}}, J. Amer. Math. Soc. \textbf{16} (2003), no.~3, 705--749
  (electronic). \MR{1969209 (2004c:35352)}

\bibitem{DS2011}
P.~D'Ancona and S.~Selberg, \emph{Global well-posedness of the
  {M}axwell–{D}irac system in two space dimensions}, J. Functional Analysis
  \textbf{260} (2011), no.~8, 2300--2365.

\bibitem{DHK1995}
A.~De~Bouard, N.~Hayashi, and K.~Kato, \emph{Gevrey regularizing effect for the
  (generalized) {K}orteweg-de {V}ries equation and nonlinear {S}chr\"odinger
  equations}, Ann. Inst. H. Poincar\'e Anal. Non Lin\'eaire \textbf{12} (1995),
  no.~6, 673--725. \MR{1360541 (96j:35213)}

\bibitem{Ferrari1998}
A.~B. Ferrari and E.~S. Titi, \emph{Gevrey regularity for nonlinear analytic
  parabolic equations}, Comm. Partial Differential Equations \textbf{23}
  (1998), no.~1-2, 1--16. \MR{1608488}

\bibitem{GGYTE2015}
P.~G\'erard, Y.~Guo, and E.~S. Titi, \emph{On the radius of analyticity of
  solutions to the cubic {S}zeg{\H{o}} equation}, Ann. Inst. H. Poincar\'e
  Anal. Non Lin\'eaire \textbf{32} (2015), no.~1, 97--108. \MR{3303943}

\bibitem{Himonas2009}
J.~Gorsky and A.~A. Himonas, \emph{Well-posedness of {K}d{V} with higher
  dispersion}, Math. Comput. Simulation \textbf{80} (2009), no.~1, 173--183.
  \MR{2573277 (2010k:35405)}

\bibitem{GHHP2013}
J.~Gorsky, A.~A. Himonas, C.~Holliman, and G.~Petronilho, \emph{The {C}auchy
  problem of a periodic higher order {K}d{V} equation in analytic {G}evrey
  spaces}, J. Math. Anal. Appl. \textbf{405} (2013), no.~2, 349--361.
  \MR{3061015}

\bibitem{GK2002}
Z.~Gruji{\'c} and H.~Kalisch, \emph{Local well-posedness of the generalized
  {K}orteweg-de {V}ries equation in spaces of analytic functions}, Differential
  Integral Equations \textbf{15} (2002), no.~11, 1325--1334. \MR{1920689
  (2003h:35229)}

\bibitem{Guo2009}
Z.~Guo, \emph{Global well-posedness of {K}orteweg-de {V}ries equation in
  {$H^{-3/4}(\R)$}}, J. Math. Pures Appl. \textbf{91} (2009), no.~6, 583--597.
  \MR{2531556 (2011a:35461)}

\bibitem{Guo2008}
Z.~Guo, L.~Peng, and B.~Wang, \emph{Decay estimates for a class of wave
  equations}, J. Funct. Anal. \textbf{254} (2008), no.~6, 1642--1660.

\bibitem{Guo2009a}
Z.~Guo and B.~Wang, \emph{Global well posedness and inviscid limit for the
  {K}orteweg-de {V}ries-{B}urgers equation}, J. Differential Equations
  \textbf{246} (2009), 3864--3901.

\bibitem{HHP2011}
H.~Hannah, A.~A. Himonas, and G.~Petronilho, \emph{Gevrey regularity of the
  periodic g{K}d{V} equation}, J. Differential Equations \textbf{250} (2011),
  no.~5, 2581--2600. \MR{2756077 (2011m:35319)}

\bibitem{H1990}
N.~Hayashi, \emph{Global existence of small analytic solutions to nonlinear
  {S}chr\"odinger equations}, Duke Math. J. \textbf{60} (1990), no.~3,
  717--727. \MR{1054532 (92d:35270)}

\bibitem{HKS2017}
A.~A. Himonas, K.~Henrik, and Selberg S., \emph{On persistence of spatial
  analyticity for the dispersion-generalized periodic kdv equation}, Nonlinear
  Analysis: Real World Applications \textbf{38} (2017), 35--48.

\bibitem{HP2012}
A.~A. Himonas and G.~Petronilho, \emph{Analytic well-posedness of periodic
  g{K}d{V}}, J. Differential Equations \textbf{253} (2012), no.~11, 3101--3112.
  \MR{2968194}

\bibitem{Kappeler2006}
T.~Kappeler and P.~J. Topalov, \emph{Global wellposedness of {K}d{V} in
  {$H^{-1}(\Bbb T,\Bbb R)$}}, Duke Math. J. \textbf{135} (2006), no.~2,
  327--360. \MR{2267286 (2007i:35199)}

\bibitem{KM1986}
T.~Kato and K.~Masuda, \emph{Nonlinear evolution equations and analyticity.
  {I}}, Ann. Inst. H. Poincar\'e Anal. Non Lin\'eaire \textbf{3} (1986), no.~6,
  455--467. \MR{870865 (88h:34041)}

\bibitem{K1976}
Y.~Katznelson, \emph{An introduction to harmonic analysis}, corrected ed.,
  Dover Publications, Inc., New York, 1976. \MR{0422992 (54 \#10976)}

\bibitem{KPV1991b}
C.~E. Kenig, G.~Ponce, and L.~Vega, \emph{Oscillatory integrals and regularity
  of dispersive equations}, Indiana Univ. Math. J. \textbf{40} (1991), 33--69.

\bibitem{KPV1991}
\bysame, \emph{Well-posedness of the initial value problem for the
  korteweg–de vries equation}, J. Amer. Math. Soc. \textbf{4} (1991), no.~3,
  323--347.

\bibitem{KPV1996}
\bysame, \emph{A bilinear estimate with applications to the {K}d{V} equation},
  J. Amer. Math. Soc. \textbf{9} (1996), no.~2, 573--603. \MR{1329387
  (96k:35159)}

\bibitem{KPV2001}
\bysame, \emph{On the ill-posedness of some canonical dispersive equations},
  Duke Math. J. \textbf{106} (2001), no.~3, 617--633. \MR{1813239
  (2002c:35265)}

\bibitem{KdV1895}
D.~J. Korteweg and G.~de~Vries, \emph{On the change of form of long waves
  advancing in a rectangular canal, and on a new type of long stationary},
  Philos. Mag. \textbf{39} (1895), 422--443.

\bibitem{Levermore1997}
C.~D. Levermore and M.~Oliver, \emph{Analyticity of solutions for a generalized
  {E}uler equation}, J. Differential Equations \textbf{133} (1997), no.~2,
  321--339. \MR{1427856}

\bibitem{NTT2001}
K.~Nakanishi, H.~Takaoka, and Y.~Tsutsumi, \emph{Counterexamples to bilinear
  estimates related to the {K}d{V} equation and the nonlinear {S}chr\"{o}dinger
  equation}, Methods Appl. Anal. \textbf{8} (2001), no.~4, 569--578.

\bibitem{Oliver2001}
M.~Oliver and E.~S. Titi, \emph{On the domain of analyticity of solutions of
  second order analytic nonlinear differential equations}, J. Differential
  Equations \textbf{174} (2001), no.~1, 55--74. \MR{1844523}

\bibitem{Panizzi2012}
S.~Panizzi, \emph{On the domain of analyticity of solutions to semilinear
  {K}lein-{G}ordon equations}, Nonlinear Anal. \textbf{75} (2012), no.~5,
  2841--2850. \MR{2878478}

\bibitem{SD2015}
S.~Selberg and D.~O. da~Silva, \emph{Lower bounds on the radius of spatial
  analyticity for the {K}d{V} equation}, Ann. Henri Poincar\'e (2016).
  doi:10.1007/s00023-016-0498-1.

\bibitem{ST2017}
S.~Selberg and A.~Tesfahun, \emph{On the radius of spatial analyticity for the
  quartic generalized {K}d{V} equation}, To appear in Annales Henri Poincare.

\bibitem{ST2015}
\bysame, \emph{On the radius of spatial analyticity for the 1d
  {D}irac-{K}lein-{G}ordon equations}, Journal of Differential Equations
  \textbf{259} (2015), 4732--4744.

\bibitem{Tao2001}
T.~Tao, \emph{Multilinear weighted convolution of {$L\sp 2$}-functions, and
  applications to nonlinear dispersive equations}, Amer. J. Math. \textbf{123}
  (2001), no.~5, 839--908.

\bibitem{AT2017}
A.~Tesfahun, \emph{On the radius of spatial analyticity for cubic nonlinear
  {S}chrodinger equation}, To appear in Journal of Differential Equations.

\end{thebibliography}

\end{document}